\pgfplotsset{width=10cm,compat=1.9}
\newtheorem{thm}{Theorem}
\newtheorem{ob}[thm]{Observation}
\newtheorem{prop}[thm]{Proposition}
\newtheorem{lem}[thm]{Lemma}
\newtheorem{cor}[thm]{Corollary}
\theoremstyle{definition}
\newtheorem{definition}{Definition}
\newtheorem{example}{Example}
\newcommand{\cprod}{\,\Box\,}
\newcommand{\comp}{\mathrm{comp}}
\newcommand{\QEDmark}{\mbox{\textsc{qed}}}
\newcommand{\proofStarter}[1]{\textsc{#1} }
\def\vertex(#1){\put(#1){\circle*{2}}}
\def\vertexo(#1){\put(#1){\circle{2}}}
\def\vert(#1){\put(#1){\circle*{1.5}}}
\def\verto(#1){\put(#1){\circle{1.5}}}
\def\lab(#1)#2{\put(#1){\makebox(0,0)[c]{#2}}}
\definecolor{DarkGreen}{rgb}{0.2, 0.6, 0.3}
\definecolor{electricindigo}{rgb}{0.44, 0.0, 1.0}
\let\oldenumerate\enumerate
\renewcommand{\enumerate}{
  \oldenumerate
  \setlength{\itemsep}{0.5pt}
  \setlength{\parskip}{0pt}
  \setlength{\parsep}{0pt}
}
\title{Connected forcing density and related problems}
\author
    {Boris Brimkov \\
    \small Department of Mathematics,  \\
    \small Statistics, and Physics \\
    \small Slippery Rock University \\
    \small \texttt{boris.brimkov@sru.edu} \\ 
    \and
    Randy Davila\thanks{Corresponding author.} \\
    \small Department of Computational Applied \\ 
    \small Mathematics \& Operations Research \\ 
    \small Rice University \\ 
    \small \texttt{rrd6@rice.edu} 
    \and
    Houston Schuerger \\
    \small Department of Mathematics \\ 
    \small University of Texas-Permian Basin \\ 
    \small \texttt{schuerger\_h@utpb.edu} 
    }
\date{}
\begin{document}
\maketitle

\begin{abstract}
A connected forcing set of a graph is a zero forcing set that induces a connected subgraph. In this paper, we introduce and study CF-dense graphs --- graphs in which every vertex belongs to some minimum connected forcing set. We identify several CF-dense graph families and investigate the relationships between CF-density and analogous notions in zero forcing and total forcing. We also characterize CF-dense trees and give a formula for the number of distinct connected forcing sets in trees. Finally, we analyze when CF-density is preserved under graph operations such as Cartesian products, joins, and coronas.
\end{abstract}

\section{Introduction}
\label{sec:intro}

Let $G$ be a simple, undirected, and finite graph, with vertex set $V(G)$, where each vertex $v \in V(G)$ is colored either blue or white. The \emph{zero forcing color change rule} is a dynamic process in which a blue vertex with exactly one white neighbor \emph{forces} that neighbor to become blue. A set $S \subseteq V(G)$ is called a \emph{zero forcing set} if, starting from $S$ as the set of initially blue vertices, repeated application of the color change rule results in all vertices of $G$ becoming blue. The \emph{zero forcing number} of $G$, denoted $Z(G)$, is the minimum cardinality of such a set. This parameter was introduced in \cite{AIM-Workshop} as a combinatorial tool to bound the maximum nullity of a graph.

Variants and analogues of zero forcing have appeared independently across a range of disciplines, including PMU placement in power networks \cite{BruneiHeath,powerdom3}, target set selection in social influence models \cite{target1,target3,target2}, quantum control theory \cite{quantum1}, and fast-mixed searching \cite{fast_mixed_search}. Beyond these applications, zero forcing has been used in coding theory, logic circuit design, and the modeling of contagion processes in epidemiology and social networks; see \cite{zf_tw,quantum1,logic1,zf_np} and the monograph \cite{HoLiSh-zero-forcing-book}. As a graph invariant, $Z(G)$ has also attracted considerable attention for its connections to classical parameters such as \emph{independence number}, \emph{vertex cover number}, \emph{domination number}, \emph{chromatic number}, and Hamiltonicity, and for its behavior in structured graph families such as claw-free graphs; see, e.g.,~\cite{k-forcing, upper-total-dom, vc, Da-15, Da-19, DaHe-20b, DaHe-20a, zfclaw, indep, join} and the monograph~\cite{HaHeHe2024}.


Several variants of zero forcing have been introduced to incorporate additional structural or application-driven constraints. For instance, a zero forcing set that induces a subgraph without isolated vertices is called a \emph{total forcing set}, and the minimum size of such a set in a graph $G$ is the \emph{total forcing number}, denoted $Z_t(G)$. Similarly, a zero forcing set that induces a connected subgraph is called a \emph{connected forcing set}, and its minimum size is the \emph{connected forcing number}, denoted $Z_c(G)$. These variants arise naturally in applications such as optimizing PMU placement in power networks, where minimizing wiring complexity is a key concern. Structural and algorithmic properties of total and connected forcing have been extensively studied; see \cite{BrDa, cf-complexity, Da-19, DaHe19c, DaHe19a, DaHe19b, Da-connected-forcing, new-connected-zero-forcing}. Notably, computing $Z(G)$, $Z_t(G)$, and $Z_c(G)$ is generally intractable, as the associated decision problems are $\mathcal{NP}$-complete~\cite{NP-Complete, Da-19, DaHe19b}.


Beyond determining the minimum size of a zero forcing, total forcing, or connected forcing set, it is also of interest to study graphs in which \emph{every} vertex belongs to some such minimum set. This idea was first formalized in~\cite{DaHePe2023a}, where the notions of \emph{ZF-dense} and \emph{TF-dense} graphs were introduced. A graph is ZF-dense (respectively, TF-dense) if each vertex belongs to some minimum zero forcing set (respectively, total forcing set). A graph that is both ZF-dense and TF-dense is said to be \emph{ZTF-dense}.

In this paper, we introduce and investigate the analogous concept of \emph{CF-dense} graphs~-- graphs in which every vertex belongs to some minimum connected forcing set. While the motivations behind CF-density align with those of its analogues, our results reveal that CF-dense graphs can differ markedly in both structure and parameter values from ZF-dense and TF-dense graphs. To capture the overlap among all three notions, we define a graph to be \emph{ZTCF-dense} if it is simultaneously ZF-dense, TF-dense, and CF-dense. Intermediate notions such as \emph{ZCF-dense} and \emph{TCF-dense} are defined analogously. We address the distinct behavior of CF-density by characterizing CF-dense trees and identifying several new families of ZTCF-dense graphs.

We also investigate how the connected and total forcing numbers behave under common graph operations, including Cartesian products, joins, and coronas. These results yield new families of CF-dense and ZTCF-dense graphs and lead to several new formulas for computing connected and total forcing numbers in product constructions. In particular, we show that under certain structural conditions, both CF-density and ZTCF-density are preserved under these operations.

The remainder of the paper is organized as follows. In Section~\ref{sec:preliminaries} we give notation and terminology used throughout this manuscript. In Section~\ref{sec:uniqueness}, we examine graphs with unique connected forcing sets and characterize the number of very large or very small connected forcing sets a graph can have. Section~\ref{sec:basics} identifies several new families of CF-dense graphs. In Section~\ref{sec:trees-and-unicyclic}, we characterize all CF-dense trees and provide a complete enumeration of their minimum connected forcing sets. Section~\ref{sec:cartesian-alt} explores how various graph operations affect CF-density and ZTCF-density. We conclude with final remarks and open questions in Section~\ref{sec:conclusion}.

\section{Preliminaries} 
\label{sec:preliminaries}

Throughout this paper, all graphs are assumed to be simple, undirected, and finite. Let $G$ be a graph with vertex set $V(G)$ and edge set $E(G)$. The \emph{order} of $G$ is $n(G) = |V(G)|$, and its \emph{size} is $m(G) = |E(G)|$. A graph is said to be \emph{nontrivial} if $n(G) \geq 2$. Two vertices $v, w \in V(G)$ are \emph{adjacent} (or \emph{neighbors}) if $vw \in E(G)$. The \emph{open neighborhood} of a vertex $v$ is denoted by $N_G(v)$ and consists of all vertices adjacent to $v$.
The \emph{degree} of a vertex $v$ is $d_G(v) = |N_G(v)|$. The minimum degree among the vertices in $G$ is denoted $\delta(G)$.  Occasionally, for notations such as $d_G(v)$ or $N_G(v)$, the $G$ is dropped if it is clear from context that the graph in question is $G$.

For a set of vertices $S \subseteq V(G)$, the \emph{open neighborhood} of $S$ is defined as $N_G(S) = \bigcup_{v \in S} N_G(v)$.
The \emph{boundary} of $S$, denoted $\partial_G(S)$, is the set of vertices in $N_G(S)$ that do not belong to $S$; that is, $\partial_G(S) = N_G(S) \setminus S$. A graph $G$ is said to be \emph{connected} if there exists a path between every pair of vertices. The maximal connected subgraphs of $G$ are called its \emph{components} and $\comp(G)$ denotes the number of components $G$ has. A graph $H$ is a \emph{subgraph} of graph $G$, denoted $H \subseteq G$, if $V(H) \subseteq V(G)$ and $E(H) \subseteq E(G)$. For a subset $S \subseteq V(G)$, the subgraph \emph{induced} by $S$ is denoted $G[S]$. A vertex $v \in V(G)$ is a \emph{cut-vertex} if $G[V(G) \setminus \{v\}]$ is disconnected.  Two graphs $G$ and $H$ are said to be isomorphic, denoted $G \simeq H$, provided there exists a bijective function $f:V(G) \rightarrow V(H)$ such that given $u,v \in V(G)$, $uv \in E(G)$ if and only if $f(u)f(v) \in E(H)$.

The complete graph, path, and cycle on $n$ vertices are denoted by $K_n$, $P_n$, and $C_n$, respectively. A \emph{tree} is a connected graph with no cycles, and a \emph{forest} is any acyclic graph. In a tree, a vertex of degree one is called a \emph{leaf}, and a vertex adjacent to a leaf is a \emph{support vertex}. A support vertex adjacent to at least two leaves is referred to as a \emph{strong support vertex}. The \emph{diamond graph} is the graph obtained by removing one edge from $K_4$. A \emph{trivial graph} is a graph on exactly one vertex and an \emph{isolate-free} graph is a graph with no trivial graph as a component. 

For any graph-theoretic terminology not defined here, we refer the reader to~\cite{HaHeHe2024}. For zero forcing conventions, we refer to the monograph~\cite{HoLiSh-zero-forcing-book}. We will also use the standard notation $[k] = \{1, \dots, k\}$.

\section{Uniqueness of Connected Forcing Sets}
\label{sec:uniqueness}

A central theme of this paper is the study of CF-dense graphs -- graphs in which every vertex belongs to some minimum connected forcing set. As a natural complement to this, we begin by exploring the uniqueness and extremal behavior of connected forcing sets. In particular, we show that if a graph $G$ of order $n$ has a unique connected forcing set of a given size, then that size must be $n$ or $Z_c(G)$. We also characterize graphs based on the number of very large or very small connected forcing sets they admit, shedding light on the structural rigidity or flexibility of their forcing behavior.

To formalize our study of connected forcing sets of various sizes, we introduce the following notation.

\begin{definition}
\label{def_zc}
Let $G$ be a connected graph of order $n$. For each integer $1 \leq i \leq n$, let $z_c(G; i)$ denote the number of distinct connected forcing sets of $G$ of size $i$.
\end{definition}

In particular, $z_c(G; Z_c(G))$ counts the number of minimum connected forcing sets of $G$. The following proposition gives a natural range for this quantity.

\begin{prop}
\label{prop:cf_prop_bound}
For any connected graph $G$, 
\[
1 \leq z_c(G; Z_c(G)) \leq \binom{n}{Z_c(G)},
\]
and both bounds are tight.
\end{prop}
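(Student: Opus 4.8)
The plan is to dispose of the two inequalities first, since they are essentially definitional, and then to produce an extremal graph for each. The lower inequality is immediate: $Z_c(G)$ is \emph{defined} as the minimum cardinality of a connected forcing set, so $G$ has at least one connected forcing set of that size, whence $z_c(G; Z_c(G))\ge 1$. The upper inequality is equally immediate: by Definition~\ref{def_zc}, $z_c(G; Z_c(G))$ counts connected forcing sets of size exactly $Z_c(G)$, and these are in particular $Z_c(G)$-element subsets of $V(G)$, of which there are only $\binom{n}{Z_c(G)}$.

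Next I would exhibit extremal graphs. For the upper bound, take $G=K_n$: a set of $n-2$ blue vertices leaves every blue vertex with two white neighbors, so $Z(K_n)\ge n-1$; since any $n-1$ vertices force the remaining vertex we get $Z(K_n)=n-1$, and as those $n-1$ vertices also induce the connected graph $K_{n-1}$ it follows that $Z_c(K_n)=n-1$ and \emph{every} $(n-1)$-subset of $V(K_n)$ is a connected forcing set. Hence $z_c(K_n; Z_c(K_n))=\binom{n}{n-1}=\binom{n}{Z_c(K_n)}$, so the upper bound is tight. For the lower bound, take $G=K_1$: its single vertex is a connected forcing set while the empty set is not, so $Z_c(K_1)=1$ and $z_c(K_1; Z_c(K_1))=1$.

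I do not anticipate a real obstacle here: the proposition is a warm-up, and each half reduces to a one-line remark plus a standard example. The only point that warrants a second's thought is the lower bound — one is tempted to hunt for a larger graph with a unique minimum connected forcing set, and only on reflection does the trivial graph present itself as the clean witness. Pinning down for which orders and structures the value $1$ (or values near the upper bound) can actually occur is the natural next question, and it is precisely what the remainder of this section takes up.
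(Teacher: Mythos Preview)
Your argument is correct and matches the paper's proof for both inequalities and for the upper-bound witness $K_n$. The one difference is your choice of witness for the lower bound: you use $K_1$, whereas the paper exhibits a non-trivial graph (the one in Figure~\ref{fig:extensions}) with a unique minimum connected forcing set. Your $K_1$ is certainly valid---indeed it hits both bounds at once, since $\binom{1}{1}=1$---but it is a degenerate witness; the paper's example does more work, demonstrating that uniqueness of the minimum connected forcing set is not a trivial-order phenomenon and thereby motivating the structural questions that follow in the section. Your closing remark already anticipates exactly this, so the gap is one of exposition rather than mathematics.
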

\begin{proof}
The lower bound follows from the fact that $G$ must have at least one connected forcing set of minimum size by the definition of $Z_c(G)$. The upper bound follows from the fact that there are at most $\binom{n}{Z_c(G)}$ subsets of $V(G)$ of size $Z_c(G)$. 

To show both bounds are tight, note that the graph shown in Figure~\ref{fig:extensions} has a unique minimum connected forcing set, so $z_c(G; Z_c(G)) = 1$. On the other hand, for the complete graph $K_n$, we have $Z_c(K_n) = n - 1$, and every $(n-1)$-subset of vertices forms a connected forcing set. Thus, $z_c(K_n; Z_c(K_n)) = \binom{n}{n-1} = \binom{n}{Z_c(K_n)}$.
\end{proof}



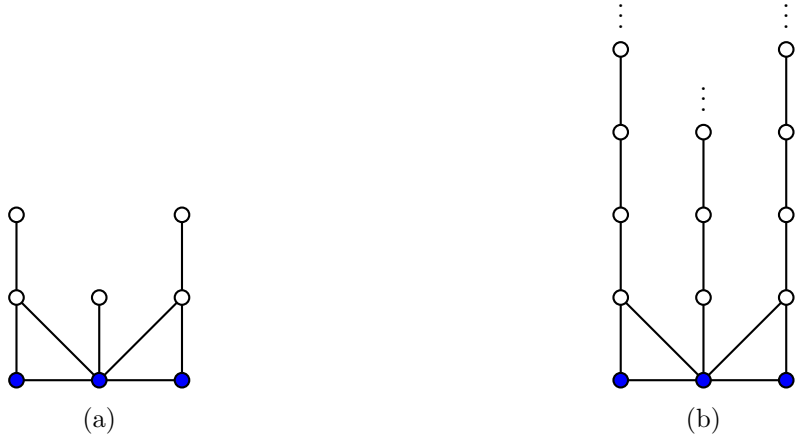
\begin{figure}[htb]
\centering

\begin{subfigure}[b]{0.45\textwidth}
\centering
\begin{tikzpicture}[scale=1.1,style=thick,x=1cm,y=1cm]
\def\vr{2.5pt}

\path (0, 1) coordinate (v1);
\path (1, 1) coordinate (v2);
\path (2, 1) coordinate (v3);
\path (0, 2) coordinate (v4);
\path (1, 2) coordinate (v5);
\path (2, 2) coordinate (v6);
\path (0, 3) coordinate (v7);
\path (2, 3) coordinate (v8);

\draw (v1) -- (v2) -- (v3);
\draw (v1) -- (v4);
\draw (v2) -- (v4) -- (v7);
\draw (v2) -- (v5);
\draw (v2) -- (v6) -- (v3);
\draw (v6) -- (v8);

\foreach \i in {v1,v2,v3} \draw (\i) [fill=blue] circle (\vr);
\foreach \i in {v4,v5,v6,v7,v8} \draw (\i) [fill=white] circle (\vr);
\end{tikzpicture}
\caption*{(a)}
\end{subfigure}
\hfill
\begin{subfigure}[b]{0.45\textwidth}
\centering
\begin{tikzpicture}[scale=1.1,style=thick,x=1cm,y=1cm]
\def\vr{2.5pt}

\path (0, 1) coordinate (v1);
\path (1, 1) coordinate (v2);
\path (2, 1) coordinate (v3);
\path (0, 2) coordinate (v4);
\path (1, 2) coordinate (v5);
\path (2, 2) coordinate (v6);

\path (0, 3) coordinate (v7);
\path (0, 4) coordinate (v71);
\path (0, 5) coordinate (v72);

\path (1, 3) coordinate (v51);
\path (1, 4) coordinate (v52);

\path (2, 3) coordinate (v8);
\path (2, 4) coordinate (v81);
\path (2, 5) coordinate (v82);

\draw (v1) -- (v2) -- (v3);
\draw (v1) -- (v4);
\draw (v2) -- (v4) -- (v7) -- (v71) -- (v72);
\draw (v2) -- (v5) -- (v51) -- (v52);
\draw (v2) -- (v6) -- (v3);
\draw (v6) -- (v8) -- (v81) -- (v82);

\node at (0, 5.5) {\small$\vdots$};
\node at (1, 4.5) {\small$\vdots$};
\node at (2, 5.5) {\small$\vdots$};

\foreach \i in {v1,v2,v3}
  \draw (\i) [fill=blue] circle (\vr);

\foreach \i in {v4,v5,v6,v7,v8,v71,v81,v72,v82,v51,v52}
  \draw (\i) [fill=white] circle (\vr);
\end{tikzpicture}
\caption*{(b)}
\end{subfigure}

\caption{(a) A graph with a unique minimum connected forcing set (shown in blue). (b) An extension of this graph into an infinite family, each with a unique minimum connected forcing set.}
\label{fig:extensions}
\end{figure}

The graph in Figure \ref{fig:extensions} is also an example of a graph with a unique minimum total forcing set. On the other hand, a (non-trivial) graph cannot have a unique minimum zero forcing set, since reversing the forcing chains associated with a zero forcing set produces another zero forcing set. We now strengthen Proposition~\ref{prop:cf_prop_bound} by showing that only a minimum and maximum connected forcing set can be unique.

\begin{lem}
\label{prop_path_cf_poly}
For $n\geq 2$, $z_c(P_n;i)=
\begin{cases}
2 &\text{if\; } i=1\\
n-i+1 &\text{if\; } 2\leq i\leq n.
\end{cases}$
\end{lem}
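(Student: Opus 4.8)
The plan is to reduce the count to a purely structural observation about paths. Label the vertices of $P_n$ as $v_1, v_2, \dots, v_n$ along the path. Every connected induced subgraph of $P_n$ is itself a subpath, so a connected set $S \subseteq V(P_n)$ with $|S| = i$ must be an \emph{interval} $S = \{v_a, v_{a+1}, \dots, v_{a+i-1}\}$ for some $a$ with $1 \le a \le n-i+1$, and conversely each such interval induces a connected subgraph. Hence there are exactly $n-i+1$ connected sets of size $i$, and the lemma reduces to deciding which of these intervals are zero forcing sets.

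For $i = 1$, a singleton $\{v_a\}$ is a zero forcing set of $P_n$ precisely when $d_{P_n}(v_a) \le 1$: if $v_a$ is a leaf (that is, $a \in \{1,n\}$) then $v_a$ forces its unique neighbor and the forcing propagates along the whole path; if $1 < a < n$ then $v_a$ has two white neighbors and no force is ever possible. Thus exactly the two leaves yield zero forcing sets, so $z_c(P_n; 1) = 2$, consistent with the degenerate case $n = 2$.

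For $2 \le i \le n$, I claim that \emph{every} interval $S = \{v_a, \dots, v_b\}$ with $b = a+i-1$ is a connected forcing set. The key point is that, since $i \ge 2$, the endpoint $v_a$ has its neighbor $v_{a+1}$ inside $S$, so $v_a$ has at most one white neighbor, namely $v_{a-1}$; applying the color change rule, $v_a$ forces $v_{a-1}$, then $v_{a-1}$ forces $v_{a-2}$, and so on until $v_1$ is blue. Symmetrically, $v_b$ forces $v_{b+1}, v_{b+2}, \dots, v_n$ in turn. Therefore $S$ forces all of $V(P_n)$, and all $n-i+1$ intervals of size $i$ are connected forcing sets, giving $z_c(P_n; i) = n-i+1$.

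There is no serious obstacle here; the only points requiring a moment's care are the boundary cases where an interval already contains $v_1$ or $v_n$ (in which case the outward forcing simply proceeds in one direction only, or not at all when $S = V(P_n)$), and the observation for $i=1$ that an interior vertex can never initiate a force — both immediate from the color change rule.
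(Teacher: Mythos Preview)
Your proof is correct and follows essentially the same approach as the paper: identify the connected subsets of $P_n$ with intervals, count them, and handle the $i=1$ case via the leaves. If anything, your version is slightly more careful, since you explicitly verify that every interval of size $i\ge 2$ is a forcing set and that no interior singleton is, whereas the paper's argument leaves these points implicit.
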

\begin{proof}
Either endpoint of $P_n$ is a connected forcing set of size 1, so $z_c(P_n;1)=2$. Label the vertices of $P_n$ in order starting from one endpoint and let $R$ be a connected forcing set of size $i$. There are $n-(i-1)$ ways to choose the vertex in $R$ with the smallest label $j$. This choice uniquely determines $R$, since the other $i-1$ vertices in $R$ must be the vertices with labels $j+1,\ldots,j+(i-1)$. Thus, $z_c(P_n;i)=n-i+1$ for $2\leq i\leq n$.
\end{proof}

\begin{thm}
$z_c(G;i)=1$ only if $i=Z_c(G)$ or $i=n$. 
\end{thm}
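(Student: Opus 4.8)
The plan is to prove the contrapositive within the nontrivial range: assuming $Z_c(G) < i < n$, I will show $z_c(G;i)\neq 1$. So suppose toward a contradiction that $z_c(G;i)=1$ and let $S$ be the unique connected forcing set of size $i$. First I would study connected forcing sets of size $i-1$. Since $i-1\geq Z_c(G)$, at least one exists; fix such a set $T$. Because $|T|=i-1<n$ and $G$ is connected, $\partial_G(T)\neq\emptyset$, and for each $v\in\partial_G(T)$ the set $T\cup\{v\}$ is again a connected forcing set (it is connected since $v$ is adjacent to the connected set $T$, and it is a zero forcing set as a superset of one). Each such $T\cup\{v\}$ has size $i$, hence equals $S$; this forces $\partial_G(T)=\{w\}$ and $S=T\cup\{w\}$ for a single vertex $w$.

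Next I would extract rigid structure from $\partial_G(T)=\{w\}$ together with the fact that $S=T\cup\{w\}$ is a zero forcing set. Put $C:=V(G)\setminus S$, which is nonempty since $i<n$. No vertex of $C$ is adjacent to $T$, so $w$ is a cut-vertex of $G$ whose deletion leaves exactly the components $G[T]$ and $G[C]$. Running the forcing from $S$: the first vertex of $C$ to be colored must be forced by $w$ (the only initially blue vertex with a neighbor in $C$), which is possible only if $w$ has a \emph{unique} neighbor $u$ in $C$; and once $u$ is blue, $w$ is finished, so the remainder of $C$ must be forced by $\{u\}$ alone. Hence $Z(G[C])\leq 1$, i.e.\ $G[C]$ is a path with $u$ as an endpoint.

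Finally I would perform a vertex swap. Choose $t_0\in T$: if $|T|\geq 2$, take $t_0$ to be a non-cut-vertex of $G[T]$ that is not the unique $T$-neighbor of $w$ in the case $w$ has only one (possible since a connected graph on at least two vertices has at least two non-cut-vertices); if $|T|=1$, let $t_0$ be its single vertex. Set $S^{\dagger}:=(T\setminus\{t_0\})\cup\{w,u\}$. Then $|S^{\dagger}|=i$ and $S^{\dagger}\neq S$ since $u\notin S$; $G[S^{\dagger}]$ is connected because $G[T\setminus\{t_0\}]$ is connected, $w$ still has a neighbor in it, and $u$ is adjacent to $w$; and $S^{\dagger}$ is a zero forcing set because one may first force along the pendant path starting at $u$ until $t_0$ is the only white vertex, at which point any neighbor of $t_0$ (all now blue) forces it. This produces a second connected forcing set of size $i$, the desired contradiction.

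The hard part is the structural step: squeezing the "$w$ is a cut-vertex with a pendant path hanging at it, and $T$ sees $w$" picture out of nothing more than the uniqueness hypothesis at size $i-1$. Once that picture is in place, the swap and its verification are routine, aside from a small case check when selecting $t_0$ (to simultaneously preserve connectivity of $G[T\setminus\{t_0\}]$ and keep $w$ attached to it).
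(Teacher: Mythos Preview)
Your proof is correct and follows essentially the same approach as the paper: take a connected forcing set $T$ of size $i-1$, deduce that $\partial_G(T)=\{w\}$ and $S=T\cup\{w\}$, then swap a non-cut vertex for a boundary vertex of $S$ to produce a second connected forcing set of size $i$. The paper's version is slightly more economical in two places: (i) it never needs the full pendant-path structure of $C=V(G)\setminus S$, only the weaker fact that every $u\in\partial_G(S)$ satisfies $N_G(u)\cap S=\{w\}$; and (ii) it removes a non-cut vertex of $G[S]$ rather than of $G[T]$, which avoids your side condition about preserving a $T$-neighbor of $w$, and then verifies forcing by the one-line observation that once the removed vertex is forced back the blue set contains $S$. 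Your extra structural work is not wrong, just more than is required.
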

\begin{proof}
Suppose $G$ has a unique connected forcing set $R$ of size $i$, where $Z_c(G)<i<n$. Let $R'$ be a connected forcing set of size $i-1$. Then $R'\subset R$, since otherwise $R'$ together with any neighbor of $R'$ forms a connected forcing set of size $i$ different from $R$. Let $\{v\}=R\backslash R'$. Then $\partial_G(R')=\{v\}$, since otherwise $R'$ together with either of its two or more neighbors would form a connected forcing set of size $i$. Moreover, for all $u \in \partial_G(R)$, $N_G(u)\cap R=\{v\}$, since if some $u \in \partial_G(R)$ was adjacent to a vertex in $R$ other than $v$, $v$ would not be the only vertex in $\partial_G(R')$. 

If $R'$ consists of a single vertex, then $G$ is a path, and by Proposition \ref{prop_path_cf_poly}, there is no unique connected forcing set of size $i<n$; thus, $R'$ has at least two vertices. Let $w$ be a non-cut vertex of $G[R]$ different from $v$, and let $u$ be a vertex in $\partial_G(R)$. We claim $R\backslash\{w\}\cup \{u\}$ is a connected forcing set of $G$. This set is connected by construction, and it is forcing since $w$ can be forced in the first timestep by any of its neighbors in $R'$. This contradicts $R$ being the unique connected forcing set of size $i$.
\end{proof}

\noindent We conclude this section by characterizing $z_c(G;i)$ for extremal values of $i$.

\begin{prop}
\label{thm_zf_polynomial_properties}
If $G$ is a connected graph of order $n$, then
\begin{enumerate}
\item[\textup{(1)}] $z_c(G; n) = 1$,
\item[\textup{(2)}] $z_c(G; n-1) = \left|\left\{ v \in V : \comp(G \setminus \{v\}) = 1\right\}\right|$,
\item[\textup{(3)}] 
$
z_c(G; n-2) = \left| \left\{
\begin{aligned}
\{u, v\} \subset V :\ & u \ne v,\ \comp(G \setminus \{u, v\}) = 1, \\
& N_G(u) \setminus \{v\} \ne N_G(v) \setminus \{u\}
\end{aligned}
\right\} \right|,
$
\item[\textup{(4)}] 
$
z_c(G; 1) = 
\begin{cases}
2 & \text{if } G \simeq P_n \text{ for } n \ge 2, \\
1 & \text{if } G \simeq P_1, \\
0 & \text{otherwise}.
\end{cases}
$
\end{enumerate}
\end{prop}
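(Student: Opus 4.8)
The plan is to prove each of the four parts by directly characterizing when a size-$i$ set of vertices is a connected forcing set, exploiting the fact that for $i$ close to $n$ almost every vertex is already blue, so the zero forcing dynamics collapse to a small number of remaining forces.

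\textbf{Part (1).} The whole vertex set $V(G)$ is trivially a connected forcing set since $G$ is connected and no forcing is needed; it is the only set of size $n$. So $z_c(G;n)=1$.

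\textbf{Part (2).} For a set $S$ of size $n-1$, write $V\setminus S=\{v\}$. The single white vertex $v$ is forced precisely when some blue vertex has $v$ as its unique white neighbor, which is automatic: any neighbor of $v$ works (all other vertices are blue). Hence every $(n-1)$-subset is a zero forcing set, and $S$ is a \emph{connected} forcing set exactly when $G[S]=G\setminus\{v\}$ is connected, i.e. $\comp(G\setminus\{v\})=1$. Counting the valid choices of $v$ gives the stated formula. (Equivalently: $z_c(G;n-1)=n$ minus the number of cut-vertices of $G$.)

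\textbf{Part (3).} Now let $V\setminus S=\{u,v\}$ with $u\ne v$. As in Part (2), $S$ induces a connected subgraph iff $\comp(G\setminus\{u,v\})=1$; this is the first condition. For the forcing condition, we need $S$ to be a zero forcing set. I would argue: if $uv\notin E(G)$, then $u$ and $v$ are each forced by any of their (necessarily blue) neighbors, so $S$ is always forcing. If $uv\in E(G)$, then starting from $S$ a blue vertex $x$ can force only if it has a unique white neighbor; the only candidates are the common neighbors of $u$ and $v$ in $S$ (which see both $u,v$ white, hence have two white neighbors and cannot force), or a vertex adjacent to exactly one of $u,v$. So the process starts iff some blue vertex is adjacent to exactly one of $u,v$ — equivalently $N_G(u)\setminus\{v\}\ne N_G(v)\setminus\{u\}$ — and once one of them (say $u$) is forced, the other ($v$) is then forced by $u$ or any neighbor. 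Combining with the $uv\notin E(G)$ case, note that if $uv\notin E(G)$ then automatically $v\in N_G(v)\setminus\{u\}$ is false but more usefully $u\notin N_G(u)\setminus\{v\}$... here I must be slightly careful: when $uv\notin E$, the condition $N_G(u)\setminus\{v\}\ne N_G(v)\setminus\{u\}$ can fail (twin non-adjacent vertices), yet $S$ is still forcing. I would handle this by observing that when $uv\notin E(G)$, $G\setminus\{u,v\}$ connected already forces $S$ to be a valid connected forcing set, and separately that a pair of non-adjacent true twins has $G\setminus\{u,v\}$ connected only in degenerate cases; more cleanly, I will restate: the counted condition should be that $S$ forces, and I will verify that "$\comp(G\setminus\{u,v\})=1$ and $N_G(u)\setminus\{v\}\ne N_G(v)\setminus\{u\}$" is equivalent to "$S$ is a connected forcing set." The forward direction: if $S$ is a connected forcing set the component condition is immediate, and if $N_G(u)\setminus\{v\}=N_G(v)\setminus\{u\}=:W$ then every vertex of $W$ is white-adjacent to both $u,v$ so cannot force, and no other blue vertex is adjacent to $u$ or $v$, so $S$ is not forcing — contradiction; this covers both the adjacent and non-adjacent subcases at once. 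The reverse direction is the case analysis above. Counting the unordered pairs $\{u,v\}$ satisfying both conditions gives the claim.

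\textbf{Part (4).} A connected forcing set of size $1$ is a single vertex $\{w\}$ that forces all of $G$; since $G[\{w\}]$ is trivially connected, this is just asking $Z(G)=1$. It is classical (and easy) that $Z(G)=1$ iff $G$ is a path $P_n$, with the two endpoints being the only singleton forcing sets when $n\ge 2$, and the unique vertex working when $n=1$; for any other connected graph $Z(G)\ge 2$, so $z_c(G;1)=0$. This also follows directly from Lemma~\ref{prop_path_cf_poly} for the path case. Putting the three cases together yields the stated piecewise formula.

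\textbf{Main obstacle.} The only genuinely delicate point is Part (3): getting the forcing condition exactly right when $u$ and $v$ are adjacent versus non-adjacent, and checking that the single inequality $N_G(u)\setminus\{v\}\ne N_G(v)\setminus\{u\}$ correctly captures forcibility in \emph{both} subcases (in particular that it is automatically satisfied, or else irrelevant, whenever $uv\notin E(G)$ and $G\setminus\{u,v\}$ is connected). I expect this to require a short but careful case split; Parts (1), (2), and (4) are essentially immediate.
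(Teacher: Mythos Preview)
Your proposal is correct and follows essentially the same approach as the paper: direct characterization of which $(n-i)$-subsets are connected forcing sets for small $i$, with the key observation in Part~(3) that $N_G(u)\setminus\{v\}=N_G(v)\setminus\{u\}$ is exactly the obstruction to any initial force (regardless of whether $uv\in E(G)$). Your writeup of Part~(3) contains a false start---the claim that non-adjacent true twins have $G\setminus\{u,v\}$ disconnected except in degenerate cases is wrong (e.g.\ two vertices in the size-$3$ part of $K_{2,3}$)---but you correctly abandon it for the unified ``forward/reverse'' argument, which is precisely what the paper does; just delete the detour.
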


\proof The numbers of the proofs below correspond to the numbers in the statement of the theorem. 
\begin{enumerate}
\item[(1)] The only connected forcing size of size $n$ is $V$.
\item[(2)] Since $G$ is connected, any non-cut vertex $v$ has a neighbor that can force $v$. Thus, each set which excludes one non-cut vertex of $G$ is a connected forcing set of size $n-1$; moreover, no set which excludes a cut vertex is a connected forcing set. 
\item[(3)] Let $u,v$ be two non-isolated vertices of $G$; if $N_G(u)\backslash\{v\}\neq N_G(v)\backslash\{u\}$, there is a vertex $w$ adjacent to one of $u$ and $v$, but not the other. Suppose $uw \in E(G)$; then, $w$ can force $u$ and any neighbor of $v$ can force $v$ (since $d(v)\neq 0$). Thus, any pair of vertices $u,v$ satisfying these conditions can be excluded from a zero forcing set of size $n-2$. On the other hand, a pair of vertices $u,v$ that does not satisfy these conditions cannot be excluded from a zero forcing set, since every vertex that is adjacent to one will be adjacent to the other, and hence no vertex will be able to force $u$ or $v$. Moreover, any pair of vertices that form a separating set cannot be excluded from any connected forcing set of size $n-2$.  
\item[(4)] The only graph with connected forcing number 1 is $P_n$; thus if $G\not\simeq P_n$, $z_c(G;1)=0$. If $G\simeq P_n$ and $n\geq 2$, either end of the path is a connected forcing set; if $n=1$, there is a single connected forcing set.
\end{enumerate}

We note that since graphs $G$ with $Z_c(G)=2$ have been completely characterized in \cite{extremalcf}, it is possible to obtain a closed form expression for $z_c(G;2)$. However, such an expression would have numerous cases  depending on the structure of $G$, and we have left this description for future work.

\section{Families of ZTCF-Dense Graphs}\label{sec:basics}

In this section, we identify several families of graphs that are ZTCF-dense. Some of the arguments are extensions of results in~\cite{DaHePe2023a} on ZF-dense and TF-dense graphs.

\begin{ob}
Every cycle graph $C_n$ is ZTCF-dense.
\end{ob}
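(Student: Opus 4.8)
The plan is to show that $C_n$ is simultaneously ZF-dense, TF-dense, and CF-dense by exhibiting, for each vertex $v$, a minimum zero forcing set, a minimum total forcing set, and a minimum connected forcing set that contains $v$. The key observation is that $C_n$ is vertex-transitive, so it suffices to handle a single vertex and then invoke symmetry; equivalently, one can directly write down the relevant minimum sets as ``arcs'' of consecutive vertices and slide them around the cycle. Thus the argument reduces to (i) recalling the standard values $Z(C_n) = 2$, $Z_t(C_n) = 2$ when $n=3$ and $Z_t(C_n)=3$ for $n \ge 4$ (or whatever the correct small-case values are), and $Z_c(C_n) = 2$, and (ii) checking that the obvious candidate sets actually force.

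First I would label the vertices $v_1, \dots, v_n$ cyclically. For zero forcing: the set $\{v_i, v_{i+1}\}$ of two adjacent vertices is a zero forcing set of size $Z(C_n) = 2$, since $v_{i+1}$ has a unique white neighbor $v_{i+2}$, which forces around the cycle; every vertex lies in such a pair, so $C_n$ is ZF-dense. For connected forcing: the same set $\{v_i, v_{i+1}\}$ induces an edge, hence is connected, so it is a minimum connected forcing set (as $Z_c(C_n) \ge Z(C_n) = 2$ and this set has size $2$); again every vertex is covered, so $C_n$ is CF-dense. For total forcing: a set of consecutive vertices of the appropriate size (two for $C_3$, three for $C_n$ with $n \ge 4$) induces a path with no isolated vertex and forces the rest of the cycle; sliding this arc shows every vertex belongs to a minimum total forcing set, so $C_n$ is TF-dense. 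Combining the three gives ZTCF-density.

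The only real care needed is in the small cases and in pinning down $Z_t(C_n)$: for $n = 3$ we have $C_3 = K_3$, where $Z = Z_c = 2$ and any two vertices work for all three parameters, and for $n = 4$ one should double-check that three consecutive vertices form a minimum total forcing set while two adjacent vertices do not (the induced edge plus the forced behavior must be verified against $Z_t(C_4) = 3$). I expect this bookkeeping of boundary cases to be the main — and only mildly tedious — obstacle; the core idea is simply that consecutive arcs of the right length serve as minimum sets for all three variants and can be translated to cover every vertex by the cyclic symmetry of $C_n$.
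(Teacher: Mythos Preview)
Your overall strategy---exhibiting arcs of consecutive vertices and sliding them around the cycle---is exactly what the paper does, but you have the value of $Z_t(C_n)$ wrong. Two adjacent vertices $\{v_i,v_{i+1}\}$ already induce the edge $K_2$, which has no isolated vertices; hence this pair is a total forcing set, and $Z_t(C_n)=2$ for every $n\ge 3$, not $3$ for $n\ge 4$. Your suggested check that ``two adjacent vertices do not'' form a minimum total forcing set of $C_4$ would fail.

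Once you correct this, the three cases collapse: the same adjacent pair $\{v_i,v_{i+1}\}$ is simultaneously a minimum zero forcing, total forcing, and connected forcing set, and every vertex lies in such a pair. That is precisely the paper's one-line proof, and no separate bookkeeping for $Z_t$ or for small $n$ is needed.
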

\begin{proof}
Any two adjacent vertices of $C_n$ form a minimum zero forcing, total forcing, and connected forcing set.
\end{proof}

\begin{ob}\label{ob:kn}
The complete graph $K_n$ is ZCF-dense for $n \geq 1$, and ZTCF-dense for $n\geq 2$.
\end{ob}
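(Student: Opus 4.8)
The plan is to verify each of the three forcing densities for $K_n$ by exhibiting, for an arbitrary vertex $v$, a minimum forcing set of the appropriate type containing $v$. First I recall the relevant minimum values: $Z(K_n) = Z_c(K_n) = n-1$ for $n \geq 1$ (any $(n-1)$-subset is a zero forcing set, since the single uncolored vertex is the unique white neighbor of every colored vertex, and it induces a connected subgraph as $K_n$ is complete), while $Z_t(K_n) = n-1$ for $n \geq 2$ as well. The key observation that makes all three cases essentially trivial is that in $K_n$ every subset of vertices induces a complete graph, hence every subset of size at least $2$ induces a connected, isolate-free subgraph; so the only thing to check is that each vertex lies in some minimum zero forcing set.

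The main steps are as follows. For ZF-density and CF-density: fix any $v \in V(K_n)$. The set $S = V(K_n) \setminus \{u\}$ for any $u \neq v$ is a zero forcing set of size $n-1 = Z(K_n) = Z_c(K_n)$ containing $v$, and $K_n[S] = K_{n-1}$ is connected; this handles $n \geq 1$ (when $n=1$ the only set $\{v\}$ works vacuously, and when $n = 2$ one still has such a $u$). For TF-density when $n \geq 2$: again take $S = V(K_n) \setminus \{u\}$ with $u \neq v$; then $|S| = n-1 = Z_t(K_n)$, $v \in S$, and $K_n[S] = K_{n-1}$ has no isolated vertex since $n - 1 \geq 1$ — more precisely one needs $n-1 \geq 2$, i.e. $K_{n-1}$ genuinely isolate-free, but for $n = 2$ the set $S$ is a single vertex, so that case needs a separate sentence noting $K_2$ is trivially TF-dense as its unique minimum total forcing set is all of $V(K_2)$. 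I would fold this into: for $n \geq 3$ the complement-of-a-vertex argument gives an isolate-free induced $K_{n-1}$, and for $n = 2$ the whole vertex set is the minimum total forcing set and contains every vertex.

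I do not expect any real obstacle here; the statement is essentially a bookkeeping exercise once one knows $Z(K_n) = Z_c(K_n) = Z_t(K_n) = n-1$. The only subtlety worth a careful sentence is the small-case boundary: TF-density requires $n \geq 2$ precisely because $K_1$ has no total forcing set at all (its only vertex is isolated), whereas ZF-density and CF-density hold down to $n = 1$; and in the $n = 2$ case one should note the minimum total forcing set is $V(K_2)$ rather than a singleton. A secondary point is simply to state clearly why $Z(K_n) = n-1$: no $(n-2)$-subset can be a zero forcing set because each colored vertex would then have two white neighbors and no force is possible, and since no forcing can ever start, the set is not zero forcing; whereas any $(n-1)$-subset works immediately. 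I would present this lower-bound remark once and reuse it for all three parameters since $Z(G) \leq Z_t(G)$ and $Z(G) \leq Z_c(G)$ always, so the common value $n-1$ is pinned down from both sides.
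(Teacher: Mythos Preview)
Your approach is correct and essentially identical to the paper's: both observe that any $(n-1)$-subset of $V(K_n)$ is simultaneously a minimum zero, total, and connected forcing set for $n \geq 3$, and then dispose of the small cases $K_1$ and $K_2$ by hand. One minor slip to tighten when you write it up: your opening claim that $Z_t(K_n) = n-1$ for all $n \geq 2$ is false at $n=2$ (where $Z_t(K_2)=2$, as a single vertex is isolated), though you correctly catch and handle this a few lines later.
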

\begin{proof}
If $n\geq 3$, any $n-1$ vertices of $K_n$ form a minimum zero forcing, total forcing, and connected forcing set. $K_2$ is clearly ZTCF-dense, and $K_1$ is ZCF-dense but not TF-dense since $K_1$ has no total forcing set.
\end{proof}

The following chain of inequalities relating the minimum degree and the zero forcing, total forcing, and connected forcing numbers is well known and follows from the definitions.

\begin{ob}\label{ob:min_degree}
If $G$ is a connected graph different from a path with minimum degree $\delta(G)$, then
\[\delta(G)\leq Z(G) \leq Z_t(G) \leq Z_c(G).\]
\end{ob}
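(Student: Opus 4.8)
The plan is to establish the chain $\delta(G)\leq Z(G)\leq Z_t(G)\leq Z_c(G)$ by verifying each inequality separately, moving from right to left conceptually but presenting left to right. The two rightmost inequalities are essentially containment of constraints: every connected forcing set induces a connected subgraph, hence (for $G$ non-trivial) a subgraph with no isolated vertices, so every connected forcing set is a total forcing set; taking minimum cardinalities gives $Z_t(G)\leq Z_c(G)$. Likewise every total forcing set is in particular a zero forcing set, so $Z(G)\leq Z_t(G)$. One should note where connectedness of $G$ and $G$ not being a path are used: if $G$ is a path then $Z(G)=1$ while $Z_t(G)=2$, and more importantly connected/total forcing sets still exist; the hypothesis ``different from a path'' is really only needed to make the whole chain meaningful together with the $\delta(G)$ bound — for instance one wants $Z(G)\geq\delta(G)\geq 1$ and no degeneracy. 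I would remark that $G$ connected and nontrivial guarantees a total forcing set exists (e.g.\ $V(G)$ itself induces a connected, hence isolate-free, subgraph), so all three parameters are well-defined.

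The substantive inequality is $\delta(G)\leq Z(G)$. Here I would argue as follows: let $S$ be any zero forcing set of $G$, and consider a forcing process that colors all of $V(G)$ blue. Since $G$ is not a path, $G\neq K_2$ unless $\delta(G)=1$ (in which case the bound is trivial), so we may focus on the generic case. Let $v$ be the last vertex forced in the process, forced by some vertex $u$. At the moment immediately before $v$ is forced, $v$ is the unique white neighbor of $u$, but in fact $v$ is the unique white vertex in all of $G$ (it is forced last). Hence every neighbor of $v$ other than the forcing vertex $u$ was already blue; combined with the fact that $u$ itself is blue, every neighbor of $v$ is blue at that timestep, so all $d_G(v)\geq\delta(G)$ neighbors of $v$ are blue. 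Now, trace back: each of these blue vertices either belonged to $S$ originally or was forced; but a cleaner route is the standard one — consider instead a chronological list of forces and observe that a zero forcing set must have size at least $\delta(G)$ because at the final step the last forced vertex has all its neighbors blue, and more carefully one shows $|S|\geq\delta(G)$ via the observation that the ``reversal'' of the process is also a valid forcing process, so without loss of generality $v$ can be taken to lie in a set realizing the bound; the simplest self-contained argument is: in any forcing set $S$, consider the last vertex $w$ to be forced; all of $N_G(w)$ is blue before $w$ is colored, and since $w$ is the only white vertex, $S$ together with the forces already performed covers $N_G(w)$, but this does not immediately bound $|S|$. The correct short proof: it is well known (and I would cite it as folklore, or reprove it) that $Z(G)\geq\delta(G)$ because if $|S|<\delta(G)$ then $S$ cannot be a zero forcing set — indeed consider the first force, performed by some $x\in S$ on a white neighbor $y$; then $x$ has exactly one white neighbor, so at least $d_G(x)-1\geq\delta(G)-1\geq|S|-1$ neighbors of $x$ lie in $S$, which already forces $|S|\geq\delta(G)$ unless $x$ has a forced-blue neighbor, but nothing has been forced yet at the first step — so all $d_G(x)-1$ blue neighbors of $x$ are in $S$, giving $|S|\geq d_G(x)\geq\delta(G)$.

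The main obstacle is getting the $\delta(G)\leq Z(G)$ argument airtight with the right quantifier over forcing processes: one must pick the \emph{first} forcing step (not the last) so that ``blue'' unambiguously means ``in $S$,'' and then the counting $|S|\geq d_G(x)-1+1=d_G(x)\geq\delta(G)$ goes through cleanly, where $x$ is the vertex performing the first force, $x\in S$, its unique white neighbor is the forced vertex, and all its other $d_G(x)-1$ neighbors are blue hence in $S$ (since no forces have yet occurred), and $x$ itself is in $S$. I would present this as a one-line justification after noting the inequality is ``well known,'' matching the tone of the Observation. For the remaining two inequalities I expect no obstacle at all; the only subtlety worth a half-sentence is that ``connected subgraph on $\geq 2$ vertices has no isolated vertex,'' which uses $n(G)\geq 2$, guaranteed since $G$ is connected and not a (one-vertex) path. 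I would then conclude by assembling the three inequalities.
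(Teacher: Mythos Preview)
Your argument is essentially correct, and the paper itself gives no proof beyond declaring the chain ``well known'' and ``follows from the definitions,'' so your containment reasoning and the first-force argument for $\delta(G)\leq Z(G)$ are in line with what is intended (the detour through the last-force idea is unnecessary, but you correctly abandon it).

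There is one small gap in your analysis of where the hypothesis ``different from a path'' is used. You attribute it to making the $\delta$ bound meaningful and to ensuring $n(G)\geq 2$, but neither is the point: for $P_n$ one still has $\delta=1=Z$, and $n(G)\geq 2$ is not what is needed. The hypothesis is required precisely for the step $Z_t(G)\leq Z_c(G)$. A connected forcing set is a total forcing set only when it has at least two vertices, and for $G=P_n$ a single endpoint is a connected forcing set, so $Z_c(P_n)=1<2=Z_t(P_n)$ and the inequality fails. Excluding paths guarantees $Z(G)\geq 2$, hence every connected forcing set has size at least $2$ and therefore induces an isolate-free subgraph. Your sentence ``connected subgraph on $\geq 2$ vertices has no isolated vertex, which uses $n(G)\geq 2$'' conflates the order of $G$ with the cardinality of the forcing set; it is the latter that must be at least $2$, and that is exactly what ``not a path'' buys you.
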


A \emph{wheel graph} $W_n$ ($n\geq 4$) is the graph formed by connecting a single central vertex $v_c$ to all vertices of a cycle $C_{n-1}$. 

\begin{figure}[htb]
\centering

\begin{subfigure}[b]{0.45\textwidth}
\centering
\begin{tikzpicture}[scale=1.1,style=thick,x=1cm,y=1cm]
\def\vr{2.5pt}
\path (0:1) coordinate (v1);
\path (60:1) coordinate (v2);
\path (120:1) coordinate (v3);
\path (180:1) coordinate (v4);
\path (240:1) coordinate (v5);
\path (300:1) coordinate (v6);
\path (0,0) coordinate (vc);
\draw (v1) -- (v2) -- (v3) -- (v4) -- (v5) -- (v6) -- (v1);
\foreach \i in {1,2,3,4,5,6}
  \draw (vc) -- (v\i);
\foreach \i in {3,4,5,6}
  \draw (v\i) [fill=white] circle (\vr);
\foreach \i in {1,2}
  \draw (v\i) [fill=blue] circle (\vr);
\draw (vc) [fill=blue] circle (\vr);
\end{tikzpicture}
\caption{The wheel graph $W_6$}
\end{subfigure}
\hfill
\begin{subfigure}[b]{0.45\textwidth}
\centering
\begin{tikzpicture}[scale=1.1,style=thick,x=1cm,y=1cm]
\def\vr{2.5pt}
\path (-1,0) coordinate (v0000);
\path (0,0) coordinate (v1000);
\path (-1,1) coordinate (v0100);
\path (0,1) coordinate (v1100);
\path (-0.5,0.5) coordinate (v0010);
\path (0.5,0.5) coordinate (v1010);
\path (-0.5,1.5) coordinate (v0110);
\path (0.5,1.5) coordinate (v1110);
\path (1.5,-0.5) coordinate (v0001);
\path (2.5,-0.5) coordinate (v1001);
\path (1.5,0.5) coordinate (v0101);
\path (2.5,0.5) coordinate (v1101);
\path (2,0) coordinate (v0011);
\path (3,0) coordinate (v1011);
\path (2,1) coordinate (v0111);
\path (3,1) coordinate (v1111);
\foreach \i/\j in {
0000/1000,0000/0100,0100/1100,1000/1100,
0000/0010,0100/0110,1100/1110,1000/1010,
0010/1010,0010/0110,0110/1110,1010/1110,
0001/1001,0001/0101,0101/1101,1001/1101,
0001/0011,0101/0111,1101/1111,1001/1011,
0011/1011,0011/0111,0111/1111,1011/1111,
0000/0001,1000/1001,0100/0101,1100/1101,
0010/0011,1010/1011,0110/0111,1110/1111}
\draw (v\i) -- (v\j);
\foreach \i in {0000,1000,0100,1100,0010,1010,0110,1110}
  \draw (v\i) [fill=blue] circle (\vr);
\foreach \i in {0001,1001,0101,1101,0011,1011,0111,1111}
  \draw (v\i) [fill=white] circle (\vr);
\end{tikzpicture}
\caption{The hypercube $Q_4$}
\end{subfigure}

\vspace{1em} 

\begin{subfigure}[b]{0.6\textwidth}
\captionsetup{skip=0pt}
\centering
\begin{tikzpicture}[scale=.6,style=thick,x=1cm,y=1cm]
\def\vr{3.5pt}
\path (-.5, 0.0) coordinate (d11);
\path (0.5, 1.0) coordinate (d12);
\path (1.5, 0.0) coordinate (d13);
\path (0.5, -1.0) coordinate (d14);
\path (2.5, 0.0) coordinate (d21);
\path (3.5, 1.0) coordinate (d22);
\path (4.5, 0.0) coordinate (d23);
\path (3.5, -1.0) coordinate (d24);
\path (5.5, 0.0) coordinate (d31);
\path (6.5, 1.0) coordinate (d32);
\path (7.5, 0.0) coordinate (d33);
\path (6.5, -1.0) coordinate (d34);
\path (8.5, 0.0) coordinate (d41);
\path (9.5, 1.0) coordinate (d42);
\path (10.5,0.0) coordinate (d43);
\path (9.5, -1.0) coordinate (d44);
\foreach \i/\j in {
d11/d12,d12/d13,d11/d14,d14/d13,d12/d14,d13/d21,
d21/d22,d22/d23,d21/d24,d24/d23,d22/d24,d23/d31,
d31/d32,d32/d33,d31/d34,d34/d33,d32/d34,d33/d41,
d41/d42,d42/d43,d41/d44,d44/d43,d42/d44}
\draw (\i) -- (\j);
\draw (d11) to[out=-125,in=-65] (d43);
\foreach \i in {d13,d14,d21,d22,d23,d31,d33,d34,d41,d44}
  \draw (\i) [fill=blue] circle (\vr);
\foreach \i in {d11,d12,d24,d32,d42,d43}
  \draw (\i) [fill=white] circle (\vr);
\end{tikzpicture}
\caption{The diamond necklace $N_4$}
\end{subfigure}

\caption{Three graphs with minimum connected forcing sets shown in blue: (a) the wheel graph $W_6$, (b) the hypercube $Q_4$, and (c) the diamond necklace $N_4$.}
\label{fig:forcing-abc-layout}
\end{figure}
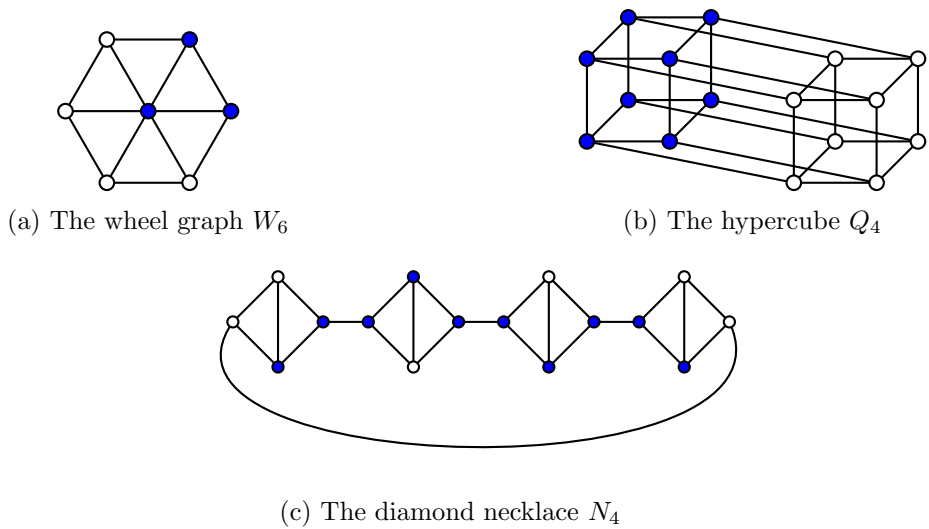

\begin{prop}\label{prop:wheels}
If $G$ is a wheel graph, then $G$ is ZTCF-dense. 
\end{prop}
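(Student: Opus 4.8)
The plan is to compute $Z_c(W_n)$ exactly and then exhibit, for every vertex of $W_n$, a minimum connected forcing set containing it. First I would observe that a wheel $W_n$ has a cycle $C_{n-1}$ with vertices $v_1,\dots,v_{n-1}$ (indices mod $n-1$) together with a central vertex $v_c$ adjacent to all of them, so $d(v_i) = 3$ for each rim vertex and $d(v_c) = n-1$. I claim $Z_c(W_n) = n-3$ for $n \geq 5$ (with the small cases $W_4 \simeq K_4$ and $W_5$ handled separately, noting $W_4$ is covered by Observation~\ref{ob:kn}). For the upper bound, take $S$ to consist of the central vertex $v_c$ together with all rim vertices except two \emph{adjacent} rim vertices, say $v_1$ and $v_2$; then $|S| = 1 + (n-1) - 2 = n-2$, which is too large, so instead I would look more carefully. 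A better candidate: let $S = \{v_c, v_2, v_3, \dots, v_{n-3}\}$, i.e.\ $v_c$ plus a \emph{path} of $n-4$ consecutive rim vertices, omitting three consecutive rim vertices $v_{n-2}, v_{n-1}, v_1$. This $S$ has size $n-3$, induces a connected subgraph (a fan), and forces: $v_{n-2}$ is forced by $v_{n-3}$ (its only white neighbor among $v_{n-3}$'s neighbors once we check degrees), then $v_1$ by $v_2$, and finally $v_{n-1}$ by either neighbor. I would verify the forcing order carefully, since the central vertex has many white neighbors initially and cannot force until only one rim vertex remains white.

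Next I would establish the matching lower bound $Z_c(W_n) \geq n-3$. Observation~\ref{ob:min_degree} gives $Z_c(W_n) \geq Z(W_n)$, and it is known (or easily shown) that $Z(W_n) = n-3$ for $n \geq 5$; alternatively one argues directly that any connected forcing set $S$ with $|S| \le n-4$ leaves at least $4$ white vertices, and I would show that at the moment of the first force only the central vertex or a rim vertex with a unique white rim-neighbor can act, which forces the white set to be "small" and consecutive on the rim, contradicting $|V \setminus S| \ge 4$ together with connectivity of $S$. This counting/structure argument is the part I expect to require the most care.

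Finally, for CF-density I must show every vertex lies in \emph{some} minimum connected forcing set of size $n-3$. By the rotational symmetry of the rim, the sets $S_k = \{v_c\} \cup \{v_k, v_{k+1}, \dots, v_{k+n-5}\}$ (indices mod $n-1$) for $k = 1, \dots, n-1$ are all minimum connected forcing sets, and their union over all $k$ is $V(W_n)$: $v_c$ is in every $S_k$, and each rim vertex $v_j$ lies in $S_k$ whenever $j \in \{k, \dots, k+n-5\}$, which happens for many choices of $k$ since $n-4 \geq 1$. Thus every rim vertex and the hub are covered, giving CF-density. For ZF-density and TF-density I would note that each $S_k$ is also a zero forcing set and, since it induces a connected subgraph on at least two vertices (as $n - 3 \geq 2$), a total forcing set as well; hence the same family witnesses ZF-density and TF-density, and since $Z(W_n) = Z_t(W_n) = Z_c(W_n) = n-3$ for $n \ge 5$ these minimum sets coincide. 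I would close by checking $W_4$ via Observation~\ref{ob:kn} and $W_5$ directly, completing the proof that every wheel is ZTCF-dense.
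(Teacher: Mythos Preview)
Your computation of $Z_c(W_n)$ is wrong, and this derails the entire argument. You claim $Z_c(W_n)=n-3$, but in fact $Z_c(W_n)=3$ for every $n\ge 4$. To see the upper bound, take $S=\{v_c,v_i,v_{i+1}\}$ for any pair of adjacent rim vertices. This set is connected, and it forces: $v_i$ has exactly one white neighbor $v_{i-1}$ and $v_{i+1}$ has exactly one white neighbor $v_{i+2}$, and the forcing then propagates around the rim in both directions until the cycle is filled. The lower bound is immediate from Observation~\ref{ob:min_degree}, since $\delta(W_n)=3$. Thus $Z(W_n)=Z_t(W_n)=Z_c(W_n)=3$, and your asserted ``known'' value $Z(W_n)=n-3$ is simply false (already for $n=7$ the three-vertex set above forces, so $Z(W_7)=3\neq 4$).

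Consequently your sets $S_k$ of size $n-3$ are connected forcing sets but not \emph{minimum} ones once $n\ge 7$, so they do not witness CF-density, TF-density, or ZF-density. Your lower-bound sketch (``any connected forcing set with $|S|\le n-4$ leaves at least $4$ white vertices \ldots'') cannot be completed, because sets of size $3$ do force. The paper's proof is the short one indicated above: the family $\{v_c,v_i,v_{i+1}\}$, as $i$ ranges over the rim, already covers every vertex (the hub is in all of them, and each rim vertex $v_j$ lies in the set with $i=j$), giving CF-density directly; ZTF-density is then quoted from~\cite{DaHePe2023a}.
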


\begin{proof}
Let $S$ be a set consisting of the central vertex $v_c$ of $G$ and two other adjacent vertices of $G$ (see Figure~\ref{fig:forcing-abc-layout} (a)). $G[S]$ is connected, and is also a zero forcing set since the two adjacent vertices on the $C_{n-1}$ subgraph of $G$ can force all other vertices in that cycle. By Observation~\ref{ob:min_degree}, $3=\delta(G)\leq Z_c(G)\leq |S|$, so $Z_c(G)=3$. Moreover, since $S$ can be chosen to contain any vertex of $G$, $G$ is CF-dense. 
In \cite{DaHePe2023a} it is shown that $G$ is ZTF-dense; therefore, $G$ is ZTCF-dense.
\end{proof}

The $k$-dimensional \emph{hypercube} $ Q_k $ is defined recursively as follows:
$Q_1 = K_2$, and for $ k \ge 2 $, $ Q_k = Q_{k-1} \cprod K_2 $, where $ \cprod $ denotes the Cartesian product of graphs.

\begin{prop}\label{prop:cubes}
If $G$ is a hypercube, then $G$ is ZTCF-dense. 
\end{prop}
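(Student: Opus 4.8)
The plan is to exhibit, for every vertex $x$ of $Q_k$, a minimum connected forcing set containing $x$, and to verify that its size matches the known value of $Z_c(Q_k)$. The starting point is the recursive structure $Q_k = Q_{k-1} \cprod K_2$: write $Q_k$ as the union of two copies $Q_{k-1}^0$ and $Q_{k-1}^1$ of $Q_{k-1}$, joined by a perfect matching. It is well known (and follows from the Cartesian-product behavior of zero forcing) that $Z(Q_k) = 2^{k-1}$, realized by taking one entire copy $Q_{k-1}^0$ as the blue set: the matching edges then force the other copy one "layer" at a time. Taking $S = V(Q_{k-1}^0)$ gives a connected zero forcing set of size $2^{k-1}$, and combined with Observation~\ref{ob:min_degree}, $\delta(Q_k) = k \le Z(Q_k) \le Z_t(Q_k) \le Z_c(Q_k)$, we get $Z_c(Q_k) = Z_t(Q_k) = 2^{k-1}$ for all $k \ge 2$ (the $\delta$ bound is not tight here, but the explicit set already pins the value from above, matching $Z(Q_k)$ from below).

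Next I would establish CF-density. By vertex-transitivity of $Q_k$, the single set $S = V(Q_{k-1}^0)$ together with an automorphism moving any prescribed vertex into $Q_{k-1}^0$ already shows every vertex lies in some minimum connected forcing set; so CF-density is essentially immediate once $Z_c(Q_k) = 2^{k-1}$ is known, and I would phrase it that way to keep things short. Then I would invoke the prior literature, as the proof of Proposition~\ref{prop:wheels} does: it is shown in~\cite{DaHePe2023a} that hypercubes are ZTF-dense, i.e. ZF-dense and TF-dense. Combining this with the CF-density just established yields that $Q_k$ is ZTCF-dense, completing the proof. For the base case $k = 1$, $Q_1 = K_2$ is ZTCF-dense by Observation~\ref{ob:kn}, so the statement holds for all $k \ge 1$.

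The only genuinely delicate point is the claim that one full copy $Q_{k-1}^0$ forces the other copy. I would argue this directly rather than citing a product theorem: order the vertices of $Q_{k-1}$ by a forcing chain for a single vertex of $Q_{k-1}$ — but more simply, observe that once $Q_{k-1}^0$ is blue, each vertex $v^0 \in Q_{k-1}^0$ has exactly one white neighbor, namely its matched partner $v^1$, so in a single timestep the whole copy $Q_{k-1}^1$ turns blue. Thus $S$ is actually a zero forcing set with forcing completed in one step, which also makes connectivity transparent since $Q_{k-1}^0$ is connected. This is the crux; everything else is bookkeeping with the chain of inequalities and the citation. One subtlety to flag is that the $\delta(G) \le Z(G)$ bound in Observation~\ref{ob:min_degree} is stated for graphs "different from a path," which hypercubes $Q_k$ with $k \ge 2$ satisfy, and I would note this explicitly; alternatively, the value $Z_c(Q_k) = 2^{k-1}$ can be pinned down without the minimum-degree bound at all, using only the explicit set for the upper bound and $Z(Q_k) \le Z_c(Q_k)$ together with the known $Z(Q_k) = 2^{k-1}$ for the lower bound.
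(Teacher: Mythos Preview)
Your proposal is correct and follows essentially the same approach as the paper: take one full copy of $Q_{k-1}$ as a connected forcing set of size $2^{k-1}$, use $Z(Q_k)=2^{k-1}$ together with $Z(G)\le Z_c(G)$ to pin down $Z_c(Q_k)$, and then cite~\cite{DaHePe2023a} for ZTF-density. The only cosmetic difference is that you invoke vertex-transitivity to cover all vertices, whereas the paper simply observes that every vertex lies in one of the two copies of $Q_{k-1}$, each of which is already a minimum connected forcing set.
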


\begin{proof}
Let $ G = Q_k = Q_{k-1} \cprod K_2 $. Let $S$ be the set consisting of all vertices in one of the two copies of $ Q_{k-1}$ (see Figure~\ref{fig:forcing-abc-layout} (b)). $G[S]$ is connected, and is also a zero forcing set since each vertex can force its neighbor in the other copy of $Q_{k-1}$ in the first timestep. In~\cite{Pe12} it was shown that $ Z(Q_k) = 2^{k-1} $. Thus, by Observation \ref{ob:min_degree}, $2^{k-1}=Z(G)\leq Z_c(G)\leq |S|= 2^{k-1} $, so $Z_c(G)= 2^{k-1}$. Moreover, since $S$ can be chosen to be either of the two copies of $Q_{k-1}$, every vertex of $G$ belongs to some minimum connected  forcing set. Thus, $ G $ is CF-dense. In \cite{DaHePe2023a} it is shown that hypercubes are ZTF-dense; therefore, $G$ is  ZTCF-dense.
\end{proof}

A \emph{star} $S_n$ is the complete bipartite graph $K_{1,n-1}$. The degree $n-1$ vertex of $S_n$ is called the central vertex.

\begin{prop}\label{prop:star}
The star graph $S_n$ is TCF-dense for $n\geq 4$ (but not ZF-dense). 
\end{prop}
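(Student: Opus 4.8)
The plan is to analyze the structure of minimum connected forcing sets of the star $S_n$ and show that every vertex lies in one of them, while separately recalling why $S_n$ fails to be ZF-dense. First I would pin down $Z_c(S_n)$: any connected forcing set must induce a connected subgraph, and the only connected induced subgraphs of a star containing more than one vertex are those containing the central vertex $v_c$ together with some subset of leaves; a set of leaves alone is connected only if it is a single leaf, but a single leaf is not a zero forcing set when $n \ge 3$ (it forces $v_c$, after which $v_c$ has $n-2 \ge 2$ white neighbors and stalls). So any connected forcing set of size at least $2$ contains $v_c$. If $S$ contains $v_c$ and $k$ leaves, then at the start the only vertex with a unique white neighbor is either a blue leaf (whose only neighbor $v_c$ may or may not be white) or $v_c$ itself; $v_c$ can force a white leaf only if exactly one leaf is white, i.e. $k = n-2$. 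Hence $Z_c(S_n) = n-1$: the set consisting of $v_c$ and all but one leaf. For $n = 4$ this gives $Z_c(S_4) = 3$.

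Next I would verify CF-density directly from this description. For any leaf $\ell$, the set $S = \{v_c\} \cup (\text{all leaves except one leaf} \ne \ell)$ is a connected forcing set of size $n-1$ containing $\ell$; for the central vertex, any such set contains $v_c$. Thus every vertex of $S_n$ belongs to some minimum connected forcing set, so $S_n$ is CF-dense for $n \ge 4$ (indeed for all $n \ge 3$, but the proposition only claims $n \ge 4$, presumably to align with the TF-dense claim). For TF-density I would exhibit a minimum total forcing set: recall (or observe) that $Z_t(S_n) = n-1$ as well — a total forcing set has no isolated vertices, so again it essentially must contain $v_c$ plus enough leaves, and the same set $\{v_c\} \cup (\text{all but one leaf})$ works and induces $K_{1,n-2}$, which has no isolated vertices. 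Since this set can be taken to contain any prescribed leaf, and it always contains $v_c$, $S_n$ is TF-dense; combining, $S_n$ is TCF-dense.

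Finally, for the parenthetical claim that $S_n$ is not ZF-dense, I would recall that $Z(S_n) = n-2$ (the bound $\delta \le Z$ is useless here since $\delta = 1$, but directly: $n-2$ leaves form a zero forcing set, and fewer cannot work since leaving two leaves and $v_c$ white, or two leaves white with $v_c$ blue, stalls). Every minimum zero forcing set of $S_n$ therefore consists of exactly $n-2$ of the $n-1$ leaves and omits $v_c$ and one leaf; in particular no minimum zero forcing set contains the central vertex $v_c$, so $S_n$ is not ZF-dense.

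The main obstacle is not really a difficulty but a matter of care: one must rule out \emph{all} purported connected forcing sets of size smaller than $n-1$ and confirm that no connected set avoiding $v_c$ can force, and similarly establish the exact values $Z_c(S_n) = Z_t(S_n) = n-1$ and $Z(S_n) = n-2$; the only subtlety is the degenerate behavior of single-vertex or all-leaf sets, which must be handled explicitly for small $n$. Everything else is a direct check against the rigid structure of the star.
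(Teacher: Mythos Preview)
Your main argument for $n \ge 4$ is correct and follows essentially the same route as the paper: identify the minimum connected (and total) forcing sets as $V(S_n)\setminus\{\ell\}$ for a leaf $\ell$, observe these cover every vertex, and note that minimum zero forcing sets omit the center.

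There is, however, a genuine error in your parenthetical aside. You assert that $S_n$ is CF-dense ``indeed for all $n\ge 3$'' and speculate that the restriction $n\ge 4$ is only there to match TF-density. This is false: $S_3=P_3$, and $Z_c(P_3)=1$ with the two minimum connected forcing sets being the endpoints, so the central vertex lies in no minimum connected forcing set and $S_3$ is \emph{not} CF-dense. The paper points this out explicitly. Your own computation exposes the slip: you write that a single leaf is not a zero forcing set ``when $n\ge 3$'' because after forcing $v_c$ one has $n-2\ge 2$ white leaves --- but $n-2\ge 2$ means $n\ge 4$, not $n\ge 3$. For $n=3$ a single leaf \emph{is} a connected forcing set, which collapses your lower-bound argument for $Z_c$ in that case. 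The fix is simply to restrict every step to $n\ge 4$ and drop the aside.
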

\begin{proof}
Let $v$ be any leaf of $S_n$. Any set $S$ consisting of all vertices of $S_n$ except $v$ is connected and forcing, and is also minimum since when $n\geq 4$, omitting two leaves will cause the set to not be forcing, and omitting the central vertex  will cause the set to have isolates. Thus, $S_n$ is TCF-dense. On the other hand, the central vertex of $S_n$ is not contained in any minimum zero forcing set. Thus, $S_n$ is not ZF-dense. Note also that $S_3$ is not CF-dense, since then the central vertex would not be contained in any minimum connected forcing set.
\end{proof}

A \emph{complete multipartite graph} is a graph whose vertex set can be partitioned into $ k\geq 2 $ disjoint independent sets $ V_1, \ldots, V_k $, and every vertex in $ V_i $ is adjacent to every vertex in $ V_j $ for all $ i \neq j $. 

\begin{prop}\label{prop:multipartite}
If $ G $ is a complete multipartite graph that is not a star, then $ G $ is ZTCF-dense. 
\end{prop}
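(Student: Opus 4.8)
### Proof Proposal

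The plan is to first pin down $Z_c(G)$ for a complete multipartite graph $G = K_{n_1,\dots,n_k}$ that is not a star, and then exhibit, for each vertex $v$, a minimum connected forcing set containing $v$ that simultaneously witnesses ZF-density, TF-density, and CF-density. Since $G$ is not a star, we have $k \geq 3$, or $k = 2$ with both parts of size at least $2$; in either case $G$ has diameter $2$ and is well-connected, which makes connectivity of candidate sets essentially automatic. Write $n = n_1 + \dots + n_k$ and order the parts so that $n_1 \geq n_2 \geq \dots \geq n_k$.

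First I would determine $Z(G)$. The standard fact (and the key computation) is that for a complete multipartite graph that is not complete, a zero forcing set must contain all but one vertex of every part except possibly the largest, together with all but at most one vertex overall; concretely I expect $Z(G) = n - k + 1$ when no part is "too large," but the largest part forces a correction: if $n_1 > n - n_1 + \ldots$ one must keep essentially all of the big part blue. Rather than guess, the clean statement I would prove is: a set $S$ is a zero forcing set of $G$ iff $V(G)\setminus S$ is contained in a single part, or $|V(G)\setminus S| \leq 1$. The forward direction: if two white vertices lie in different parts, no blue vertex can ever have a unique white neighbor (every blue vertex sees both of them, or if a blue vertex misses one it lies in that vertex's part and then sees the other plus possibly more); if two white vertices lie in the same part $V_i$ and $|V_i \setminus S|\geq 2$, a blue vertex outside $V_i$ sees both, and a blue vertex inside $V_i$ sees neither, so again no force happens — unless all of $V(G)\setminus V_i$ is already blue and $V_i$ contributes, but then the white vertices in $V_i$ still can't be forced since they are mutually non-adjacent. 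The reverse direction is an easy two-step forcing argument. This yields $Z(G) = \min\{\,n-1,\ n - n_1 + 1\,\} = n - n_1 + 1$, using $n_1 \geq 2$ which holds since $G$ is not a star (if every $n_i = 1$ then $G = K_k$ and $Z_c = n-1 = n - n_1$... wait — careful: $K_k$ has all parts of size $1$, and there $n - n_1 + 1 = n$, but $Z_c(K_k) = n-1$; so the formula should be $Z(G) = \max\{n - n_1, 1\} = n - n_1$ when $G$ is not complete, and $n-1$ when $G$ is complete). I would state this carefully, separating the complete case (already handled by Observation~\ref{ob:kn}) from the genuinely multipartite case.

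Next, knowing $G[V(G)\setminus V_i]$ is connected for any single part $V_i$ (again because $k \geq 3$ or both-parts-large), and that removing all of one part and nothing else leaves a connected graph, I would argue $Z_c(G) = Z(G)$: the set $S = V(G)\setminus V_1$ is a zero forcing set by the characterization, it induces a connected subgraph, and it has size $n - n_1 = Z(G) \leq Z_c(G)$ by Observation~\ref{ob:min_degree}. It is also isolate-free since $k \geq 3$ or $n_2 \geq 2$. So all three forcing numbers coincide and equal $n - n_1$. Now for CF-density: given any vertex $v$, if $v \notin V_1$ then $v \in S$ already. If $v \in V_1$, pick any other part $V_j$ (exists since $k \geq 2$) and set $S' = (V(G)\setminus V_1)\setminus\{w\} \cup \{v\}$ for any $w \in V_j$ — but this removes a whole part $V_1$ and one vertex of $V_j$, so $V(G)\setminus S'$ is not contained in one part. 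The fix: since $n_1 \geq 2$, instead take $S' = V(G) \setminus (V_1 \setminus \{v\})$, i.e., keep exactly one vertex $v$ of the biggest part; this has size $n - n_1 + 1 = Z(G) + 1$, one too big. So when $n_1 = n_2$ I can swap: remove $V_2$ entirely instead and keep $v$'s part intact only if... Hmm. The honest route: if $v\in V_1$ and there is another part $V_j$ with $|V_j| = n_1$ is not guaranteed. The correct general fix is: $V(G) \setminus V_1$ already contains every vertex outside $V_1$, and to get $v \in V_1$ into a minimum set, replace $V_1$ by $V_j$ as the "omitted part" only when $n_j = n_1$; when $n_1 > n_j$ for all $j \neq 1$, every minimum zero forcing set omits exactly the part $V_1$ (by the size count $n - n_1 < n - n_j$), so no minimum connected forcing set contains any vertex of $V_1$ — meaning $G$ would \emph{not} be CF-dense, contradicting the proposition! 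So the main obstacle, and the thing I must get right, is the precise hypothesis: I believe the intended reading forces $n_1$ to not be a strict unique maximum, or the proposition implicitly restricts to the case all parts equal, or "not a star" is doing more work than it appears. I would resolve this by re-examining the zero forcing characterization — perhaps when $n_1$ is large, minimum zero forcing sets can still omit a mix, e.g., omit $n_1 - 1$ vertices of $V_1$ and one vertex elsewhere, if the remaining single white vertex of $V_1$ together with one white vertex elsewhere can still be forced sequentially (force the lone $V_1$-white first via a blue vertex outside $V_1$ that now has it as unique white neighbor — possible once the other part's white vertex is handled, but that one needs a blue in $V_1$...). This interdependence is exactly where the real argument lives, and clarifying it is the crux of the proof.

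Assuming the characterization shakes out so that minimum connected forcing sets are precisely the sets of the form $V(G) \setminus (V_i \setminus \{x\})$ over all parts $V_i$ with $x \in V_i$ chosen freely, together with (when some parts are size one) sets omitting $n-n_1$ vertices spread appropriately — then CF-density is immediate: every vertex $v$ lies in $V(G)\setminus(V_i\setminus\{v\})$ for its own part. TF-density and ZF-density follow from the same sets since they are zero forcing and isolate-free (isolate-freeness again from $k\geq 3$ or both-parts-large). So once the forcing-set characterization is nailed down, the three density conclusions drop out in one line each, and the whole proof reduces to: (i) characterize zero forcing sets of $G$; (ii) observe the minimum ones of appropriate form are connected and isolate-free; (iii) check every vertex is in one. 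I expect step (i), and in particular the correct treatment of a dominant largest part, to be the entire difficulty; steps (ii) and (iii) are routine given (i).
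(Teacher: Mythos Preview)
Your characterization of zero forcing sets is wrong, and this derails the entire argument. You claim that ``if two white vertices lie in different parts, no blue vertex can ever have a unique white neighbor,'' but this is false: if the only white vertices are $u\in V_i$ and $a\in V_j$ with $i\neq j$, then any blue vertex $v\in V_i$ (which exists as soon as $|V_i|\geq 2$) is non-adjacent to $u$ and adjacent to $a$, so $a$ is its \emph{unique} white neighbor and $v$ forces $a$; then $a$ forces $u$. Consequently the correct value is $Z(G)=n-2$ for every complete multipartite graph with some part of size at least $2$, not $n-n_1$. (Test: $Z(K_{3,3})=4$, not $3$.) Your later contradiction --- that a strict unique largest part would prevent CF-density --- is an artifact of this error, not a real obstruction.

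The paper's proof is short once one knows $Z(G)=n-2$ (which it cites from the AIM paper). It takes $S=V(G)\setminus\{u,a\}$ with $u,a$ in different parts, checks directly that $G[S]$ is connected using the non-star hypothesis (if $k\geq 3$ there is a whole third part inside $S$; if $k=2$ with both parts of size $\geq 2$ then $G[S]$ is a complete bipartite graph on nonempty sides), and observes that the two-step force above works. Since $Z(G)\leq Z_c(G)\leq |S|=n-2$, this set is a minimum connected forcing set; and since the omitted pair $\{u,a\}$ may avoid any prescribed vertex, every vertex lies in some such $S$. ZTF-density is quoted from prior work. Your step~(ii)/(iii) outline would have worked fine had step~(i) been correct; the fix is simply to replace your flawed characterization with: a set $S$ of size $n-2$ is zero forcing if and only if the two omitted vertices lie in \emph{different} parts.
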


\begin{proof}
Let $ V_1,\ldots, V_k $ be the independent sets that partition the vertices of $G$, with $ 1 \le |V_1| \le \cdots \le |V_k| $. If $ |V_k| = 1 $, then $ G = K_n $ ($n\geq 2$), which is ZTCF-dense by Observation~\ref{ob:kn}. Now suppose that $ |V_k| \ge 2 $. Since $ G $ is not a star,  either $ k \ge 3 $ or $ k = 2 $ and $ |V_1| \ge 2 $. 

Suppose first that $ k \ge 3 $. Let $u,v\in V_k$, $a\in V_i$, and $b\in V_j$ (where $i$, $j$, and $k$ are all different). Let $S=V(G)\setminus \{u,a\}$. Then, $G[S]$ is connected since $x,v,y$ is a path between any two vertices $x,y$ not in $V_k$, and $x,b,y$ is a path between any two vertices $x,y$ in $V_k$. 

Now suppose that $ k = 2 $ and $ |V_1| \ge 2 $. Let $u,v\in V_1$ and $a,b\in V_2$. Let $S=V(G)\setminus \{u,a\}$. Then, $G[S]$ is connected since $x,v,y$ is a path between any two vertices $x,y$ in $V_2$, and $x,b,y$ is a path between any two vertices $x,y$ in $V_1$.

In either case, $S$ is a zero forcing set since first $v$ can force $a$, and then $a$ can force $u$. Thus, $ S $ is a connected forcing set. In \cite{AIM-Workshop} it was shown that $Z(G) = n - 2$, and since $ G $ is not a path, by Observation \ref{ob:min_degree}, $ Z(G) \le Z_c(G)$. Thus, $ n-2\leq Z_c(G)  \le |S|=n - 2$, so $ Z_c(G) = n - 2 $. Moreover, in either case, $S$ can be chosen to contain any vertex of $G$. Thus, every vertex of $G$ belongs to at least one minimum connected forcing set, so $ G $ is CF-dense. Finally, it was shown in~\cite{DaHePe2023a} that if $ G $ is a complete multipartite graph that is not a star, then $ G $ is ZTF-dense. Thus, $G$ is ZTCF-dense.
\end{proof}

Most of the ZTCF-dense graphs identified so far satisfied $ Z(G) = Z_t(G) = Z_c(G) $. However, in general, ZTCF-dense graphs need not have equality among these parameters. The next result shows a family of ZTCF-dense graphs with $ Z(G) < Z_t(G) < Z_c(G)$. 

For $ k \ge 2 $, the \emph{diamond necklace} on $ k $ diamonds, denoted  $ N_k $, is obtained by taking $ k $ disjoint copies $ D_1, \dots, D_k $ of the diamond graph, where the vertices of $ D_i $ are $ \{a_i, b_i, c_i, d_i\} $ and the missing edge is $ a_ib_i $, and adding the edges $b_k a_1$ and $ b_i a_{i+1} $ for $ i \in [k-1] $. Diamond necklaces were introduced in~\cite{HeLo12}. A diamond necklace $ N_4 $ is shown in Figure~\ref{fig:forcing-abc-layout} (c). The following result about diamond necklaces is compiled from~\cite{DaHe18b} and~\cite{DaHePe2023a}.

\begin{thm}[\cite{DaHe18b},\cite{DaHePe2023a}]\label{t:necklace1}
If $ G $ is a diamond necklace of order $ n $, then $ G $ is ZTF-dense and
\[
Z(G) = \tfrac{1}{4}n + 2, \quad Z_t(G) = \tfrac{1}{2}n.
\]
\end{thm}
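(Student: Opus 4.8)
The plan is to prove the two formulas $Z(N_k)=k+2$ and $Z_t(N_k)=2k$ (writing $n=4k$, since $N_k$ is $3$-regular) together with the ZTF-density claim, using one explicit family of forcing sets plus the symmetry of $N_k$. First I would record that $\mathrm{Aut}(N_k)$ has exactly two vertex orbits: the ``$c/d$-vertices'' $\{c_i,d_i:i\in[k]\}$ and the ``$a/b$-vertices'' $\{a_i,b_i:i\in[k]\}$. Indeed, cyclically shifting the diamond index is an automorphism; transposing $c_i$ and $d_i$ inside a single diamond is an automorphism (since $c_i$ and $d_i$ are adjacent and have the same other neighbors, namely $a_i$ and $b_i$); and reversing the necklace while interchanging $a\leftrightarrow b$ is an automorphism. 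Consequently, to prove ZF-density it suffices to exhibit a single minimum zero forcing set meeting both orbits, and similarly for TF-density: applying automorphisms then sweeps that set through every vertex.

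For the upper bounds I would write down explicit sets and trace the color changes. Take $B=\{a_1,c_1,d_1\}\cup\{c_2,\dots,c_k\}$, of size $k+2$: then $c_1$ forces $b_1$, then $b_1$ forces $a_2$, and for each $i\ge 2$ the already-blue vertex $a_i$ forces its only white neighbor $d_i$ (as $c_i$ and $b_{i-1}$ are blue), then $c_i$ forces $b_i$, then $b_i$ forces $a_{i+1}$; the process closes around the necklace, so $Z(N_k)\le k+2$. Take $S=\{a_1,c_1\}\cup\{c_i,d_i:2\le i\le k-1\}\cup\{b_k,d_k\}$, of size $2k$: each listed pair spans an edge, so $G[S]$ has no isolated vertex; moreover $a_1$, which already sees the blue vertex $b_k$, forces $d_1$, then $c_1$ forces $b_1$, then $b_1$ forces $a_2$, the middle diamonds propagate as before, and finally $a_k$, seeing $b_{k-1}$ and $d_k$ blue, forces $c_k$, so $Z_t(N_k)\le 2k$. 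Since $B$ and $S$ each contain a $c/d$-vertex and an $a/b$-vertex, the two-orbit symmetry already delivers ZF-density and TF-density once the matching lower bounds are in hand.

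For the lower bounds, the key structural fact is that $F_i=\{c_i,d_i\}$ is a fort: the only vertices with a neighbor in $F_i$ are $a_i$ and $b_i$, and each is adjacent to \emph{both} $c_i$ and $d_i$, so no vertex outside $F_i$ has exactly one neighbor in $F_i$. Hence every zero forcing set meets each $F_i$ (otherwise, at the step a vertex of $F_i$ first turns blue, the forcing vertex lies outside $F_i$ yet has exactly one neighbor in it, a contradiction), and since the $F_i$ are pairwise disjoint, $Z(N_k)\ge k$ and $Z_t(N_k)\ge k$. To upgrade this to $Z_t(N_k)\ge 2k$: if a total forcing set $S$ had $|S\cap V(D_j)|\le 1$ for some $j$, then meeting $F_j$ would force that one vertex to be $c_j$ or $d_j$, but every neighbor of $c_j$ and of $d_j$ lies inside $D_j$, so it would be isolated in $G[S]$ --- contradiction; hence $|S\cap V(D_j)|\ge 2$ for all $j$ and $|S|\ge 2k$. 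To get $Z(N_k)\ge k+2$, I would show that any set $B$ with $|B|\le k+1$ meeting every $F_i$ cannot force at all: such a $B$ either hits each fort once (size $k$), hits one fort twice and the rest once, or hits each fort once and contains one extra vertex from some $\{a_j,b_j\}$ (the last two both of size $k+1$), and a short check of these cases shows no vertex of $B$ has two neighbors in $B$. Since $N_k$ is cubic, every vertex then has at least two white neighbors initially, so the color change rule never applies and $B$ is not a zero forcing set.

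The main obstacle is exactly this ``$+2$'' in $Z(N_k)=k+2$: the disjoint-fort argument only yields $k$, so one must carry out the case analysis ruling out zero forcing sets of sizes $k$ and $k+1$ --- in particular handling the lone extra vertex, and using the within-diamond transposition $c_i\leftrightarrow d_i$ to normalize which fort-vertex is chosen in each diamond. By contrast, the $Z_t$ lower bound falls out immediately from the fort observation together with isolate-freeness, the two upper-bound constructions are routine to verify once written down, and the ZF- and TF-density conclusions are then automatic from the two-orbit symmetry of $N_k$.
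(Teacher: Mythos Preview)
The paper does not prove this theorem: it is quoted verbatim as a known result compiled from \cite{DaHe18b} and \cite{DaHePe2023a}, with no argument supplied here, so there is no in-paper proof to compare against.

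Your proposal is correct and self-contained. The fort observation for $F_i=\{c_i,d_i\}$ is exactly the right structural tool; combined with isolate-freeness it immediately gives $Z_t(N_k)\ge 2k$, and your case analysis for $|B|\le k+1$ is valid --- in each case every blue vertex has at most one blue neighbor, so in a cubic graph the process is stalled from the outset, yielding $Z(N_k)\ge k+2$. Both explicit constructions force as you describe (for $k=2$ your total-forcing set specializes to $\{a_1,c_1,b_2,d_2\}$ with the middle block empty, and the sequence still goes through; you might note this explicitly). The two-orbit automorphism argument is a clean way to get ZTF-density once the minimum sets $B$ and $S$ are known to meet both orbits. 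There is no gap.
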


We now find the connected forcing number of diamond necklaces, and prove that diamond necklaces are also CF-dense.

\begin{thm}\label{thm:diamond-necklace}
If $ G $ is a diamond necklace of order $ n $, then $ G $ is ZTCF-dense and
\[
Z_c(G) = \tfrac{3}{4}n - 2.
\]
\end{thm}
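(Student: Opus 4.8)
The plan is to establish the formula $Z_c(N_k) = \tfrac{3}{4}n - 2 = 3k - 2$ (since a diamond necklace on $k$ diamonds has order $n = 4k$) by proving matching lower and upper bounds, and then to exhibit, for each vertex, a minimum connected forcing set containing it. For the upper bound, I would describe an explicit connected forcing set: in each diamond $D_i$ with vertices $\{a_i, b_i, c_i, d_i\}$ (missing edge $a_ib_i$), take all four vertices except one — say omit $d_i$ — so that the "spine" path $\ldots b_i a_{i+1} \ldots$ together with the remaining $c_i$'s stays connected, giving $3k$ vertices; then observe that near one end of the necklace a couple of additional vertices can be dropped because the forcing can be seeded there. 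Concretely, in the first diamond one can afford to include only two vertices rather than three (the missing $c_1$ or $d_1$ gets forced once the chain ignites), which saves the final $-2$. I would verify the color change rule propagates: a degree-two vertex $c_i$ with its unique white neighbor forces, then the chain zips around the cycle, much as in the $N_4$ picture in Figure~\ref{fig:forcing-abc-layout}(c). Connectivity of the induced subgraph is immediate from the construction since we only ever delete a $c_i$ or $d_i$, never a spine vertex $a_i$ or $b_i$, except at the seeded end where a small local check suffices.

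For the lower bound I would argue structurally, diamond by diamond. The key claim is that any connected forcing set $R$ of $N_k$ must contain at least three of the four vertices of all but (at most) two of the diamonds; equivalently, $|R \cap V(D_i)| \le 2$ for at most two indices $i$. The reasoning: if $R$ contains at most two vertices of $D_i$, then for the forcing to cover the two "inner" vertices $c_i, d_i$ (each of degree three inside the diamond, adjacent to $a_i, b_i$, and each other) one needs a force to enter $D_i$ from one of the spine connections $b_{i-1}a_i$ or $b_i a_{i+1}$, and one then has to check that at most two diamonds can be "fed" this way while keeping $G[R]$ connected — intuitively, the connected set must itself be a connected subgraph containing most of the spine, and a connected subgraph that omits two or more vertices in each of three different diamonds would be disconnected or would fail to force. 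Combining, $|R| \ge 3(k-2) + 2\cdot 2 = 3k - 2$. I would cross-check this against the known values $Z(N_k) = k+2$ and $Z_t(N_k) = 2k$ from Theorem~\ref{t:necklace1} to make sure the strict inequalities $Z < Z_t < Z_c$ advertised in the text indeed hold for $k \ge 3$.

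For CF-density, I would note the construction in the upper bound has enough freedom: within each diamond we chose to omit $d_i$, but we could equally omit $c_i$, and the "seeded" end can be slid to any diamond around the cycle by symmetry (the necklace is vertex-transitive on the $k$ diamond-blocks under rotation, and within a block we can arrange which vertices are omitted). So for any target vertex $v \in V(N_k)$ we can build a minimum connected forcing set of size $3k - 2$ containing $v$: if $v$ is a spine vertex $a_i$ or $b_i$ it is in every such set anyway; if $v = c_i$ or $d_i$, pick the construction that omits the other inner vertex of $D_i$ and seeds the chain elsewhere. Finally, since diamond necklaces are ZTF-dense by Theorem~\ref{t:necklace1}, combining with CF-density gives ZTCF-dense.

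The main obstacle I anticipate is the lower bound — specifically, making rigorous the claim that "at most two diamonds can have fewer than three vertices of $R$." One must carefully use the interaction between the \emph{connectivity} requirement on $G[R]$ and the \emph{forcing} requirement: a purely forcing-based count (which would mirror the $Z(N_k) = k+2$ argument and allow many sparse diamonds) is far too weak, so the argument has to exploit that $G[R]$ connected forces $R$ to contain a connected "path-like backbone" through the spine, and then a diamond contributing only two vertices to $R$ must contribute two \emph{consecutive} spine vertices or be a dead-end leaf of that backbone — and there are at most two such dead ends plus one wrap-around slack. Pinning down exactly this combinatorial dichotomy, and handling the cyclic (rather than linear) structure of the necklace, is where the real work lies.
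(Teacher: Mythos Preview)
Your overall strategy matches the paper's --- explicit set for the upper bound, structural per-diamond count for the lower bound, flexibility of the construction for CF-density, and Theorem~\ref{t:necklace1} for the rest --- but two steps do not go through as written.

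\textbf{Upper bound.} Dropping one vertex ``in the first diamond'' from the $3k$-vertex set $\bigcup_i\{a_i,b_i,c_i\}$ saves only one vertex, not two. And if you try to drop a second inner vertex from that diamond (so that $c_j$ and $d_j$ are both white), the set is no longer forcing: since $N(c_j)\setminus\{d_j\}=\{a_j,b_j\}=N(d_j)\setminus\{c_j\}$, no vertex can ever have exactly one of $c_j,d_j$ as its unique white neighbor. The paper instead removes two \emph{adjacent spine vertices} $b_j,a_{j+1}$ from $\bigcup_i\{a_i,b_i,x_i\}$ (with $x_i\in\{c_i,d_i\}$), keeping one of $c_i,d_i$ in every diamond; the induced subgraph is then a path on $3k-2$ vertices and forcing proceeds inward from each end. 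Note that this also falsifies your CF-density remark that ``if $v$ is a spine vertex it is in every such set anyway'' --- though the freedom to choose \emph{which} spine edge to break still yields CF-density.

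\textbf{Lower bound.} The crucial fact you are missing is already implicit above: any connected forcing set $R$ must contain at least one of $c_i,d_i$ for \emph{every} $i$, by the twin-neighborhood obstruction just described. This is a constraint on membership in $R$, not on how forces ``enter $D_i$'' afterward. With that in hand the lower bound becomes a pure connectivity (Steiner-tree) count, exactly as your last paragraph suspects: a connected subgraph of $N_k$ meeting $\{c_i,d_i\}$ for every $i$ must use at least $k-1$ of the $k$ inter-diamond edges $b_ia_{i+1}$, hence both $a_i$ and $b_i$ lie in $R$ for all but at most two diamonds, giving $|R|\ge 3(k-2)+2\cdot 2=3k-2$. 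Without first isolating the $c_i/d_i$ membership constraint, your per-diamond count cannot be justified.
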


\begin{proof}
Let $ G = N_k $ be a diamond necklace with order $ n = 4k $, where the vertices are labeled as in the definition of diamond necklace above. 

Let $S' = \bigcup_{i=1}^k \{a_i, b_i, x_i\}$, where for each $i$, $x_i\in\{c_i,d_i\}$. Let $S=S'\setminus\{a,b\}$, where $a\in \{a_1,\ldots,a_k\}$, $b\in \{b_1,\ldots,b_k\}$, and $a$ and $b$ are adjacent. By construction, $ |S| = 3k - 2 = \tfrac{3}{4}n - 2 $, and $ G[S] $ is connected. 

Suppose $a_i$ and $b_j$ are the elements of $\{a_1,\ldots,a_k,b_1,\ldots,b_k\}$ that are not in $S$, and suppose the vertices in $S$ are initially colored blue. Then,
\begin{enumerate}
    
\item[1)] $b_i$ can force the vertex in $\{c_i,d_i\}$ different from $x_i$; then, that vertex can force $a_i$.
\item[2)] $a_j$ can force the vertex in $\{c_j,d_j\}$ different from $x_j$; then, that vertex can force $b_j$.
\item[3)] For all $t\in [k]\setminus\{i,j\}$, $x_t$ can force the vertex in $\{c_t,d_t\}$ different from $x_t$.
\end{enumerate}
Therefore, $S$ is a connected forcing set of $G$, and $Z_c(G) \le |S| = \tfrac{3}{4}n - 2$. Let $S^*$ be a minimum connected forcing set of $G$. $S^*$ must contain at least one of $c_i$ and $d_i$ for each $i$, since otherwise $c_i$ and $ d_i $ can never be forced. A minimal connected subgraph of $G$ that contains at least one of $c_i$ and $d_i$ for each $i$ must connect them with a path that uses three  vertices per diamond, except for the two diamonds at the path's endpoints, which require only two vertices per diamond. Thus, $|S^*|\geq 3(k-2)+2+2=3k-2=\tfrac{3}{4}n - 2$, and so  $Z_c(G) = \tfrac{3}{4}n - 2 $. Moreover, since $S$ could be chosen to contain any vertex of $G$, it follows that $ G $ is CF-dense. Since by Theorem~\ref{t:necklace1}, $ G $ is ZTF-dense, we conclude that $ G $ is ZTCF-dense.
\end{proof}

\section{Connected Forcing in Trees}\label{sec:trees-and-unicyclic}
In this section, we characterize all CF-dense trees. We also give a formula to count all connected forcing sets (of any size) in trees. We will begin with recalling some definitions and known results. 

\begin{definition}
Let $G$ be a connected and non-path graph, and let $ v \in V $ be a vertex of degree at least three. A \emph{pendent path attached to $ v $} is a maximal induced path $ P \subseteq V $ such that $ G[P] $ is a connected component of $ G - v $, one endpoint of which is adjacent to $ v $. The neighbor of $ v $ in $ P $ is called the \emph{base} of the path, and we denote by $ p(v) $ the number of pendent paths attached to $ v $.
\end{definition}

\begin{definition}
If $\comp(G)$ denotes the number of connected components of $ G $, then
\begin{align*}
R_1(G) &= \{ v \in V : \comp(G - v) = 2,\; p(v) = 1 \}, \\
R_2(G) &= \{ v \in V : \comp(G - v) = 2,\; p(v) = 0 \}, \\
R_3(G) &= \{ v \in V : \comp(G - v) \ge 3 \}.
\end{align*}
\end{definition}
\begin{definition}
An \emph{$ \mathcal{M} $-set} of a graph $ G $ is any set of vertices that contains:
\begin{itemize}
    \item[1)] All vertices in $ R_2(G) $ and $ R_3(G) $, and
    \item[2)] For each vertex $ v \in V(G) $ with $d_G(v) \ge 3$, all-but-one of the bases of the pendent paths attached to $ v $.
\end{itemize}
\end{definition}

\begin{lem}[\cite{cf-complexity}]
\label{MR_lemma}
If $G$ is a connected graph different from a path and $R \subseteq V(G)$ is an arbitrary connected forcing set of $G$, then $R$ contains an $\mathcal{M}$-set.
\end{lem}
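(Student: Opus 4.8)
The plan is to show that every connected forcing set $R$ must contain all of $R_2(G)$, all of $R_3(G)$, and, for each vertex $v$ of degree at least three, all but at most one of the bases of the pendent paths attached to $v$; together these are exactly the membership requirements that define an $\mathcal{M}$-set, so $R$ will contain an $\mathcal{M}$-set (namely, take the union of those forced vertices; if $R$ happens to contain more, that is fine). I would handle the three types of forced vertices in turn, in each case arguing by contradiction: assume the relevant vertex (or two bases) lies outside $R$ and derive either a failure of connectivity of $G[R]$ or a failure of the forcing process to color all of $V(G)$.

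First I would dispense with the pendent-path condition. Suppose $v$ has degree at least three and two distinct pendent paths $P$ and $P'$ attached to $v$, with bases $w$ and $w'$, and suppose $w, w' \notin R$. Since $G[R]$ is connected and $R$ is nonempty, consider how the vertices of $P$ get colored. Any vertex of $P$ other than possibly $v$'s neighbor is a cut vertex of $G$ separating the tail of $P$ from the rest, and a pendent path is an induced path, so the only way color propagates into $P$ from outside is through $w$; but $w$ can only be forced by $v$ once $v$ is blue and $w$ is $v$'s unique white neighbor. The same holds for $w'$. Thus at the moment $v$ performs a force, it has at least two white neighbors ($w$ and $w'$, assuming both paths are still entirely white), so $v$ can never force into either path until one of them is already partly colored — but neither can become colored without $v$ first forcing into it. Since the paths are disjoint and each entirely reliant on $v$, this is a genuine deadlock: the interiors of $P$ and $P'$ never turn blue, contradicting that $R$ is a forcing set. (The connectivity of $G[R]$ also rules out $R$ containing interior vertices of a pendent path without containing its base, but this only strengthens the argument.) Hence at most one base among the pendent paths at $v$ can be outside $R$.

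Next, the vertices of $R_2(G)$: a vertex $v$ with $\comp(G-v)=2$ and $p(v)=0$, i.e., a cut vertex of $G$ that is not the cut vertex at the end of a pendent path, so both components of $G-v$ contain a vertex of degree (in $G$) at least two other than through $v$ — more simply, neither component of $G-v$ is a path having $v$ only as the attachment point. If $v\notin R$, then $R$ lies entirely in one component $C$ of $G-v$ (since $G[R]$ is connected and $v\notin R$, and $v$ is the only vertex joining the two components). But then the other component $C'$ together with $v$ must be colored entirely by forces that cross the cut at $v$; since $C'$ is not a pendent path attached at $v$, $C'$ contains a vertex adjacent to $v$ and $C'$ is not a single path hanging off $v$, so $v$ has at least two neighbors in $C'$ (this is exactly what $p(v)=0$ and $\comp(G-v)=2$ give us, after unpacking the definition of pendent path). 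Then $v$ can never be the vertex to force into $C'$, because it always has two or more white neighbors there, and $C'$ cannot be colored from within since $R\cap C'=\emptyset$ — deadlock again. For $R_3(G)$, the argument is the same but easier: if $\comp(G-v)\ge 3$ and $v\notin R$, then $R$ is contained in one component, $v$ has white neighbors in each of the other two, so $v$ can never perform a force and the components missing from $R$ stay white.

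The main obstacle, I expect, is the bookkeeping around the definition of a pendent path and carefully distinguishing the $R_1$ case (which is \emph{not} required to be in an $\mathcal{M}$-set) from the $R_2$ and $R_3$ cases: the key structural fact to pin down is that if $v$ is a cut vertex and some component of $G-v$ is a pendent path, then a connected forcing set may legitimately avoid $v$ by sitting inside the rest of the graph and letting that one path be forced in at the very end — this is why $R_1$ vertices are exempt. So I would state cleanly at the outset the characterization: $v\notin R$ forces $R$ to lie in one component of $G-v$, and then $v$ can force across the cut only into a component in which it has a unique neighbor, i.e., into a pendent path; every other "missing" component causes a deadlock. Once that single observation is isolated, the three cases and the pendent-path count all follow from it with minimal extra work.
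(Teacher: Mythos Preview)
The paper does not prove this lemma; it is quoted from \cite{cf-complexity} and used as a black box, so there is no ``paper's proof'' to compare against. What follows is an evaluation of your argument on its own merits.

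Your overall strategy---observe that $v\notin R$ traps the connected set $R$ in a single component of $G-v$, and then argue that the remaining components can never be colored---is correct, and your treatment of $R_3(G)$ and of the pendent-path bases is sound. But the $R_2(G)$ case has a genuine gap. You assert that $p(v)=0$ together with $\comp(G-v)=2$ forces $v$ to have at least two neighbors in the missing component $C'$, and your final ``key observation'' repeats this by equating ``component in which $v$ has a unique neighbor'' with ``pendent path''. This is false. Take $G$ to be a $K_4$ on $\{v,a,b,c\}$ together with a disjoint triangle on $\{d,e,f\}$ and the single edge $vd$. Then $\deg(v)=4$, $\comp(G-v)=2$, neither component is a path so $p(v)=0$ and $v\in R_2(G)$, yet $v$ has exactly one neighbor in $C'=\{d,e,f\}$. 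Your argument as written does not rule out a connected forcing set $R\subseteq\{a,b,c\}$.

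The repair is short but essential. If $v\in R_2(G)$ has a unique neighbor $u$ in $C'$, then $v$ may indeed force $u$; but from that moment on, since $v$ is the only vertex with neighbors in $C'$ and $v$ is already blue, the forcing process restricted to $C'$ is exactly zero forcing in $G[C']$ starting from $\{u\}$. A single vertex forces a graph if and only if that graph is a path with the vertex as an endpoint---which would make $C'$ a pendent path attached at $v$, contradicting $p(v)=0$. So the correct unified observation is not ``$v$ can force across the cut only into a component where it has a unique neighbor'' but rather ``the process can color a missing component $C'$ if and only if $C'$ is a pendent path attached at $v$''. With that strengthened statement in place, all three cases and the base count follow exactly as you outline.
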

In view of Lemma \ref{MR_lemma}, $\mathcal{M}$-sets can be understood as sets of ``mandatory vertices" (up to the choice of bases of pendent paths), which appear in every connected forcing set.

\begin{thm}[\cite{cf-complexity}]
\label{tree_thm}
If $T$ is a tree different from a path, then any $\mathcal{M}$-set is a minimum connected forcing set of $T$.
\end{thm}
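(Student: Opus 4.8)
The plan is to pin down the common size of the minimal $\mathcal{M}$-sets and then squeeze $Z_c(T)$ between that value and itself. Write
\[
\mu(T) \;=\; |R_2(T)\cup R_3(T)| \;+\; \sum_{v\in R_3(T)}\bigl(p(v)-1\bigr).
\]
Every minimal $\mathcal{M}$-set of $T$ consists of $R_2(T)\cup R_3(T)$ together with, for each $v\in R_3(T)$, a choice of all but one of the $p(v)$ bases of the pendent paths attached to $v$. A base is either a leaf of $T$ or a degree-two vertex lying on its own pendent path, so it belongs to none of $R_2(T),R_3(T)$, and bases attached to distinct branch vertices are distinct; hence every minimal $\mathcal{M}$-set has exactly $\mu(T)$ vertices. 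Taking, as the statement intends, $M$ to be a minimal $\mathcal{M}$-set, it then suffices to prove $Z_c(T)\ge \mu(T)$ together with the fact that $M$ is a connected forcing set (which gives $Z_c(T)\le \mu(T)$); these two facts yield $Z_c(T)=|M|=\mu(T)$, i.e. $M$ is minimum.

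The lower bound is immediate from Lemma~\ref{MR_lemma}: any connected forcing set $R$ of $T$ contains an $\mathcal{M}$-set, hence contains all of $R_2(T)\cup R_3(T)$ and at least $p(v)-1$ bases for each $v\in R_3(T)$; by the disjointness just noted these contributions do not overlap, so $|R|\ge \mu(T)$.

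For the upper bound I would verify two things about the minimal $\mathcal{M}$-set $M$. \emph{Connectivity}: the set $R_2(T)\cup R_3(T)$ is precisely $V(T)$ with every pendent-path vertex removed, i.e. $T$ with each of its pendent paths deleted; since these pendent paths are pairwise vertex-disjoint and each is a subtree joined to the rest of $T$ by a single edge at its base, deleting all of them leaves a connected subtree, which is nonempty because $T$ is not a path (it contains every vertex of $R_3(T)$). The remaining elements of $M$ are bases, each adjacent to its branch vertex in this subtree, so $T[M]$ is connected. \emph{Forcing}: each component of $T-M$ is either an entire ``omitted'' pendent path, joined to $M$ by the single edge from its branch vertex to its base, or the tail $v_2v_3\cdots v_\ell$ of a ``kept'' pendent path $v_1v_2\cdots v_\ell$ with $v_1\in M$, joined to $M$ by the single edge $v_1v_2$. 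In both cases the component is a path meeting $M$ in exactly one edge, incident to one of the component's endpoints; and a check of the three vertex types shows that no vertex of $M$ is adjacent to two such components (a branch vertex sees only the base of its single omitted pendent path, a kept base $v_1$ sees only $v_2$, and a vertex of $R_2(T)$ sees none). Consequently, starting from $M$ all blue, the unique neighbor in $M$ of each component has exactly one white neighbor and forces it, after which the force propagates unimpeded along the path to its far endpoint; doing this for every component colors all of $V(T)$, so $M$ is a connected zero forcing set.

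The main obstacle is the structural bookkeeping in the forcing step: one must be sure that the components of $T-M$ are exactly the pendent-path pieces described above, that deleting every pendent path of $T$ really does leave a connected ``trunk,'' and that no vertex of $M$ meets two components of $T-M$. These facts hinge on the local classification of degree-two vertices into $R_1(T)$ (those lying on a pendent path) and $R_2(T)$ (those not), and on the observation that pendent paths attached to distinct branch vertices are vertex-disjoint and contain no branch vertex; establishing these requires a careful but routine case analysis of how pendent paths sit inside a tree. Everything else — the size computation, the lower bound from Lemma~\ref{MR_lemma}, and the propagation argument — is then straightforward.
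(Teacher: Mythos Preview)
The paper does not supply a proof of Theorem~\ref{tree_thm}; the result is quoted from~\cite{cf-complexity} and used as a black box, so there is no in-paper argument to compare your attempt against. Your approach --- pinning down the common cardinality of minimal $\mathcal{M}$-sets, invoking Lemma~\ref{MR_lemma} for the lower bound, and then verifying directly that any minimal $\mathcal{M}$-set is connected and forcing --- is the natural one and is essentially correct.

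One small correction: your displayed formula for $\mu(T)$ undercounts whenever some branch vertex $v$ has $p(v)=0$, which does occur (for instance, take $v$ adjacent to the centers of three disjoint copies of $P_3$; each component of $T-v$ is a path, but $v$'s neighbor in it is the middle vertex, not an endpoint, so none is a pendent path). The summand should be $\max\{p(v)-1,0\}$. This is cosmetic and does not affect the logic, which only uses that minimal $\mathcal{M}$-sets share a common cardinality, that bases are disjoint from $R_2\cup R_3$ and from one another across different branch vertices, and that every vertex of $M$ has at most one neighbor in $T-M$. Your structural claims --- that $R_2\cup R_3$ is exactly $T$ with its pendent paths deleted, that this trunk is connected, and that each component of $T-M$ is a path meeting $M$ in a single edge with no vertex of $M$ meeting two such components --- are all correct and together suffice.
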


We now state the first main result of this section.

\begin{thm}\label{thm:tree-characterization}
If $T$ is a tree, then $T$ is CF-dense if and only if either:
\begin{enumerate}
    \item $ T \in \{ P_1, P_2 \} $, or
    \item Every support vertex of $ T $ is a strong support vertex.
\end{enumerate}
\end{thm}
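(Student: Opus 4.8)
The plan is to characterize CF-dense trees via the structure of $\mathcal{M}$-sets guaranteed by Lemma~\ref{MR_lemma} and Theorem~\ref{tree_thm}. First I would dispose of the paths: $P_1$ and $P_2$ are trivially CF-dense (every vertex is a connected forcing set), while for $n \ge 3$, Lemma~\ref{prop_path_cf_poly} gives $z_c(P_n;1) = 2$ and the two minimum connected forcing sets are the endpoints, so the internal vertices lie in no minimum connected forcing set; hence $P_n$ for $n \ge 3$ is not CF-dense. This matches the statement, since a path $P_n$ with $n \ge 3$ has a support vertex (its endpoint-neighbor) adjacent to only one leaf, so condition (2) fails. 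So from now on assume $T$ is a tree that is not a path, and I must show $T$ is CF-dense if and only if every support vertex is strong.

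For the "if" direction, suppose every support vertex of $T$ is strong. By Theorem~\ref{tree_thm}, the minimum connected forcing sets of $T$ are exactly the $\mathcal{M}$-sets. I need to show every vertex $v$ lies in some $\mathcal{M}$-set. An $\mathcal{M}$-set is determined by choosing, for each vertex $w$ of degree $\ge 3$, one pendent path attached to $w$ to "exclude the base of," and then taking all vertices except those excluded bases. So a vertex $v$ fails to lie in any $\mathcal{M}$-set precisely when $v$ is forced to be an excluded base in every $\mathcal{M}$-set — i.e., $v$ is the base of a pendent path $P$ attached to some $w$ with $d_T(w)\ge 3$, and moreover $v$ is the *only* candidate base that could be dropped, which happens exactly when $P = \{v\}$ is a single-vertex pendent path (a leaf adjacent to $w$) and every *other* pendent path at $w$ has length $\ge 2$ (so their bases are non-leaves, hence lie in $R_2$ or $R_3$ or are support/internal vertices that cannot be dropped... more carefully: the base of a length-$\ge 2$ pendent path is itself a cut vertex with a pendent path, so it is mandatory unless... ). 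The key observation to make precise is: if every support vertex is strong, then whenever $w$ has a pendent leaf $v$, $w$ also has a second pendent leaf $v'$, so we may choose to drop $v'$ instead of $v$, placing $v$ in an $\mathcal{M}$-set. I would also need to handle bases of longer pendent paths and vertices deep inside pendent paths — but a vertex strictly inside a pendent path, or the base of a pendent path of length $\ge 2$, is never an excludable base of a *shortest* available pendent path unless forced, and one checks directly it always lies in every $\mathcal{M}$-set. The cleanest route: show that for any vertex $v$, there is an $\mathcal{M}$-set containing $v$, by exhibiting, for each degree-$\ge 3$ vertex $w$, a valid choice of excluded pendent path avoiding $v$ — possible because the only obstruction would be $v$ being the unique leaf-pendent-path at some $w$ whose other pendent paths all have length $\ge 2$, which the strong-support hypothesis rules out (a leaf neighbor of $w$ forces $w$ to be a strong support vertex, giving a second leaf to drop instead).

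For the "only if" direction, suppose $T$ is not a path and has a support vertex $w$ that is not strong, so $w$ is adjacent to exactly one leaf $v$. The goal is to produce a vertex belonging to no minimum connected forcing set. If $d_T(w) \ge 3$, then $\{v\}$ is a pendent path at $w$ and I claim $v$ lies in no $\mathcal{M}$-set: every $\mathcal{M}$-set must exclude one pendent-path base at $w$, and... here I need $v$ to be *forced* to be the excluded one, which requires that all other pendent paths at $w$ are "mandatory-base" — this needs an argument, possibly by choosing $w$ to be an *extremal* non-strong support vertex (e.g., one closest to a leaf in a suitable sense, or where the non-leaf side is as simple as possible) so that the other branch at $w$ contains mandatory vertices. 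If $d_T(w) = 2$, then $v$ is not the base of a pendent path at a degree-$\ge 3$ vertex directly, and I instead follow the path from $w$ away from $v$ until reaching a vertex of degree $\ge 3$, showing the resulting pendent path (containing $v$ and $w$) has a non-leaf base, which is mandatory, while $v$ itself — the far tip — is still forced out by extremality considerations. **The main obstacle** is exactly this "only if" direction: making precise, via a careful choice of an extremal non-strong support vertex, that the candidate vertex $v$ is excluded from *every* $\mathcal{M}$-set rather than just some; this requires analyzing which pendent-path bases at a given branch vertex are genuinely optional, and reducing to the simplest configuration where $v$'s pendent path is the unique droppable one. Once that extremal choice is set up correctly, Theorem~\ref{tree_thm} delivers the conclusion immediately.
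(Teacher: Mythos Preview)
Your framework is the same as the paper's---reduce to Theorem~\ref{tree_thm} so that minimum connected forcing sets are exactly $\mathcal{M}$-sets---but you have misread the definition of $\mathcal{M}$-set, and this creates a real gap in the ``only if'' direction. An $\mathcal{M}$-set is \emph{not} ``all vertices except the excluded bases''; it is $R_2(T)\cup R_3(T)$ together with the chosen bases. In particular, every non-base vertex on a pendent path (the leaf of a pendent path of length~$\ge 2$, or any interior vertex of a longer one) lies in \emph{no} $\mathcal{M}$-set, contrary to your claim that such vertices ``always lie in every $\mathcal{M}$-set.'' Your description happens to be correct precisely when every pendent path has length~$1$, which---and this is the observation you are missing---is exactly equivalent to ``every support vertex is strong'' in a non-path tree.

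Once you have this, both directions are immediate and no extremal choice is needed. For ``only if'': if some pendent path has length~$\ge 2$, its far leaf is not a base and not in $R_2\cup R_3$, hence in no $\mathcal{M}$-set; if every pendent path has length~$1$ but some degree-$\ge 3$ vertex $w$ has exactly one pendent path $\{v\}$, then $v$ is excluded from every $\mathcal{M}$-set. Your proposed witness---the unique leaf $v$ at a non-strong support $w$ with $d(w)\ge 3$---fails in general: if $w$ has a second pendent path (necessarily of length~$\ge 2$), you may include $v$ and drop that path's base instead, so $v$ \emph{does} lie in some $\mathcal{M}$-set. The correct witness in that situation is the far leaf of the longer pendent path, and this is exactly what the corrected reading of $\mathcal{M}$-sets hands you without any extremality argument.
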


\begin{proof}
By Lemma \ref{MR_lemma} and Theorem \ref{tree_thm}, a set $ S \subseteq V(T) $ is a minimum connected forcing set of a tree $ T $ if and only if $ S $ is an $ \mathcal{M} $-set. Thus, $ T $ is CF-dense if and only if every vertex of $ T $ lies in at least one $ \mathcal{M} $-set.

From the definition of $ \mathcal{M} $-sets, a vertex $ v \in V(T) $ lies in some minimum connected forcing set if and only if either $ v \in R_2(T) \cup R_3(T) $ or $ v $ is the base of a pendent path attached to a vertex with at least one other pendent path.

This implies that a vertex on a pendent path is included in a minimum connected forcing set only if it is the base of that path, and only if the vertex it attaches to has at least one other pendent path. In other words, all pendent paths must consist of a single vertex (the base), and be attached to a vertex that is incident to at least one additional pendent path. This condition is precisely equivalent to the statement that every support vertex is a strong support vertex.

Thus, a tree $ T $ is CF-dense if and only if either $ T \in \{ P_1, P_2 \} $, or every support vertex in $ T $ is a strong support vertex.
\end{proof}

\noindent It is easy to see that the condition in Theorem \ref{thm:tree-characterization} can be verified in linear time.
 
\noindent Next, we will conclude our study of connected forcing in trees by deriving a closed form expression for the number of distinct connected forcing sets of a tree. We begin by defining several combinatorial structures.

\begin{definition}
\label{definition_tuples}
Let $\mathcal{S}=\mathcal{S}(a;b_1,\ldots,b_k)$ be the set of $k$-tuples of positive integers whose sum is $a$ and whose $i^\text{th}$ element is at most $b_i$. The cardinality of $\mathcal{S}$ will be denoted by $s(a;b_1,\ldots, b_k)$, and the elements of $\mathcal{S}$ will be denoted by 
\[S(a;b_1,\ldots,b_k;1),\ldots, S(a;b_1,\ldots,b_k;s),\] where $s=s(a;b_1,\ldots, b_k)$. For $1\leq j\leq k$, $S(a;b_1,\ldots,b_k;i;j)$ will denote the $j^\text{th}$ element of the $k$-tuple $S(a;b_1,\ldots,b_k;i)$. When $a$ and $b_1,\ldots,b_k$ are clear from the context, we will write for short $S_i=S(a;b_1,\ldots,b_k;i)$ and $S_{i,j}=S(a;b_1,\ldots,b_k;i;j)$ for $1\leq i\leq s$, $1\leq j\leq k$. 
Define $\mathcal{S}'=\mathcal{S}'(a;b_1,\ldots,b_k)$ to be the set of $k$-tuples of nonnegative integers whose sum is $a$ and whose $i^\text{th}$ element is at most $b_i$, and define $s'$, $S'$, and $S'_{i,j}$ analogously to $s$, $S$, and $S_{i,j}$.
\end{definition}
\begin{example}
Consider the set $\mathcal{S}(5;2,2,6)$. The cardinality of this set is $4$, and its elements are $(1,1,3),(1,2,2),(2,1,2),(2,2,1)$. If $S_1=(1,1,3)$, then $S_{1,1}=1$, $S_{1,2}=1$, and $S_{1,3}=3$. Similarly, $\mathcal{S}'(5;2,2,6)=\{(0,0,5),(0,1,4),$ $(1,0,4),(1,1,3),$ $(2,0,3),(0,2,3),$ $(1,2,2),$ $(2,1,2),(2,2,1)\}$.
\end{example}

In addition to their integer partition definitions, there are several ways to interpret the sets $\mathcal{S}(a;b_1,\ldots,b_k)$ and $\mathcal{S}'(a;b_1,\ldots,b_k)$ in Definition \ref{definition_tuples}. For example, $\mathcal{S}$ represents the ways to place $a$ identical balls into $k$ distinct boxes, so that the $i^\text{th}$ box contains at least one and at most $b_i$ balls. Similarly, $\mathcal{S}'$ represents the ways to place $a$ identical balls into $k$ distinct boxes, so that the $i^\text{th}$ box contains at most $b_i$ balls. $\mathcal{S}$ is also the set of feasible solutions to the integer program $\{\min 0 :\sum_{i=1}^k x_i=a$, $x_1\leq b_1,\ldots,x_k\leq b_k$, $x\in \mathbb{N}\}$. The latter also gives a way to obtain all the elements of $\mathcal{S}$, i.e., solving the integer program while adding previous solutions as constraints. The elements of $\mathcal{S}$ and $\mathcal{S}'$ can also be found by a recursive combinatorial algorithm, e.g., using dynamic programming. Note that the sets $\mathcal{S}(a+k;b_1+1,\ldots,b_k+1)$ and $\mathcal{S}'(a;b_1,\ldots,b_k)$ are equivalent, in the sense that $s'=s$ and $S'_{i,j}=S_{i,j}-1$. This can easily be seen from the context of distributing balls to boxes: in $\mathcal{S}(a+k;b_1+1,\ldots,b_k+1)$ every box must contain a ball, so $k$ of the balls can be placed in the boxes right away, whereupon the capacity of box $i$ becomes $b_i$; then, the remaining $a$ balls can be distributed to the boxes without a lower bound, which is exactly the process described by $\mathcal{S}'(a;b_1,\ldots,b_k)$.

We now use Definition \ref{definition_tuples} to characterize the number of connected forcing sets of \emph{spiders}, which are graph composed of $k$ pendant paths attached to a central vertex $v$.

\begin{prop}
\label{prop_spider_cf}
If $G$ is a spider graph composed of $k$ pendant paths attached to a vertex $v$, where the $i^\text{th}$ pendant path has $b_i$ vertices, $1\leq i\leq k$, then

\begin{equation*}
z_c(G;j)=s(j-1;b_1,\ldots,b_k)+\sum_{\ell=1}^ks(j-1;b_1,\ldots b_{\ell-1},b_{\ell+1},\ldots,b_k).
\end{equation*}
\end{prop}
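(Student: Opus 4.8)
The plan is to classify the connected forcing sets of the spider $G$ according to whether or not they contain the center $v$. First I would fix notation: write the $i$-th leg as $u^{(i)}_1 u^{(i)}_2\cdots u^{(i)}_{b_i}$, with $u^{(i)}_1$ the base (the neighbor of $v$) and $u^{(i)}_{b_i}$ the leaf, so that $G-v$ has exactly the $k$ legs as its connected components and $n(G)=1+\sum_{i=1}^k b_i$. The first claim to establish is that \emph{every} connected forcing set of $G$ contains $v$, which reduces the whole count to a count of connected forcing sets through $v$.

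To prove that claim, suppose $R$ is a connected forcing set with $v\notin R$. Since $G[R]$ is connected and $G-v$ has the $k$ legs as components, $R$ lies inside a single leg, say leg $\ell$. I would then show that no vertex of $W=\bigcup_{j\neq\ell}V(\text{leg }j)$ is ever forced: let $w\in W$ be the first vertex of $W$ to turn blue, forced by a blue neighbor $x$ having $w$ as its only white neighbor. If $w=u^{(j)}_1$ is a base, then $x=v$, but at that moment all bases $u^{(j')}_1$ with $j'\neq\ell$ are still white (since $w$ is the first colored vertex of $W$), so $v$ has at least $k-1\geq 2$ white neighbors, a contradiction. If $w=u^{(j)}_s$ with $s\geq 2$, then $x=u^{(j)}_{s-1}\in W$ is not yet blue, again a contradiction. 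Hence $W$ stays white and $R$ cannot force $G$. (This is exactly where the spider hypothesis $k\geq 3$ is used; when $k\leq 2$ the graph is a path, already handled by Lemma~\ref{prop_path_cf_poly}.)

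Next I would describe the connected forcing sets that do contain $v$. Because $G$ is a tree, if $u^{(i)}_s\in R$ then the unique $v$--$u^{(i)}_s$ path $v,u^{(i)}_1,\dots,u^{(i)}_s$ lies in $G[R]$, so $R\cap V(\text{leg }i)=\{u^{(i)}_1,\dots,u^{(i)}_{t_i}\}$ for $t_i=\max\{s:u^{(i)}_s\in R\}$ (taking $t_i=0$ if the set is empty). Thus connected subgraphs through $v$ correspond bijectively to tuples $(t_1,\dots,t_k)$ with $0\leq t_i\leq b_i$, and such a set has size $1+\sum_{i}t_i$. The forcing criterion is that this set forces $G$ if and only if at most one $t_i$ equals $0$: on any leg with $1\leq t_i<b_i$ the vertex $u^{(i)}_{t_i}$ has the unique white neighbor $u^{(i)}_{t_i+1}$ and propagation fills that leg to its leaf; if exactly one index $\ell$ has $t_\ell=0$ then $v$'s unique white neighbor is $u^{(\ell)}_1$, which $v$ forces, after which leg $\ell$ fills up; and if two indices are $0$, the argument of the previous paragraph shows those two legs remain white.

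Finally I would count: by the two preceding steps, $z_c(G;j)$ is the number of tuples $(t_1,\dots,t_k)$ with $0\leq t_i\leq b_i$, $\sum_{i=1}^k t_i=j-1$, and at most one zero coordinate. Splitting on the number of zero coordinates, the tuples with no zero coordinate are exactly the elements of $\mathcal{S}(j-1;b_1,\dots,b_k)$, contributing $s(j-1;b_1,\dots,b_k)$; the tuples whose single zero coordinate sits in position $\ell$ biject, by deleting that coordinate, with $\mathcal{S}(j-1;b_1,\dots,b_{\ell-1},b_{\ell+1},\dots,b_k)$, contributing $s(j-1;b_1,\dots,b_{\ell-1},b_{\ell+1},\dots,b_k)$, and summing over $\ell\in[k]$ gives the stated formula. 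I expect the only delicate points to be the first step — ruling out connected forcing sets that avoid the center — and pinning down the exact ``at most one zero'' forcing criterion; the remainder is routine bookkeeping against Definition~\ref{definition_tuples}.
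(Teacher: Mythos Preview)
Your proof is correct and follows essentially the same route as the paper's: both establish that the connected forcing sets of the spider are precisely the connected sets containing $v$ together with all or all-but-one of the bases, and then partition these sets according to which (if any) leg is omitted before counting via Definition~\ref{definition_tuples}. The only difference is that the paper obtains this characterization in one line by invoking Lemma~\ref{MR_lemma} and Theorem~\ref{tree_thm}, whereas you re-derive the needed special case directly (showing $v$ must lie in every connected forcing set and that the ``at most one empty leg'' condition is exactly the forcing criterion).
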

\begin{proof}
By Lemma \ref{MR_lemma} and Theorem \ref{tree_thm}, every connected set of vertices which contains $v$ and all-but-one neighbors of $v$ is a connected forcing set of $G$. Let $p_1,\ldots,p_k$ be the vertex sets of the pendant paths of $G$, where $|p_i|=b_i$. Then, the set of connected forcing sets of $G$ of size $j$ can be partitioned into the connected forcing sets of $G$ of size $j$ where every neighbor of $v$ is colored, and the connected forcing sets of $G$ of size $j$ where the neighbor of $v$ in $p_i$ is not colored (and hence also all other vertices in $p_i$ are not colored), $1\leq i\leq k$. 

The connected forcing sets of $G$ of size $j$ where every neighbor of $v$ is colored can be counted by $s(j-1;b_1,\ldots,b_k)$, since $v$ must be colored, which leaves $j-1$ of the remaining vertices of $G$ to be chosen, where at least one and at most $b_i$ vertices are chosen from $p_i$, $1\leq i\leq k$. The connected forcing sets of $G$ of size $j$ where no vertices of $p_\ell$ are chosen for some $\ell\in\{1,\ldots,k\}$ can be counted by $s(j-1;b_1,\ldots b_{\ell-1},b_{\ell+1},\ldots,b_k)$, since $v$ must be colored, which leaves $j-1$ of the remaining vertices of $G$ to be chosen, where at least one and at most $b_i$ vertices are chosen from $p_i$, $1\leq i\leq k$, $i\neq \ell$. Thus, the total number of connected forcing sets of $G$ of size $j$ is 
\begin{equation*}
z_c(G;j)=s(j-1;b_1,\ldots,b_k)+\sum_{\ell=1}^ks(j-1;b_1,\ldots b_{\ell-1},b_{\ell+1},\ldots,b_k).
\end{equation*}
\end{proof}

\noindent Using the previous results, we characterize the number of connected forcing sets of trees. 

\begin{thm}
\label{theorem_cf_poly_tree}
Let $T$ be a tree different from a path. Let $p_1,\ldots,p_k$ be the vertex sets of the pendant paths in $T$. Let $R_3^1$ be the set of vertices to which a single pendant path is attached, and $R_3^2=\{v_1\ldots,v_q\}$ be the set of vertices to which two or more pendant paths are attached. For $1\leq i\leq q$, let $I_j=\{i: p_i$ is a pendant path attached to $v_j\in R_3^2\}$; also, let $I_{q+1}=\{i: p_i$ is a pendant path attached to some vertex in $R_3^1\}$. For $1\leq j\leq q+1$, let $P_j=\bigcup_{i\in I_j} p_i$. Then, 
\begin{equation*}
z_c(T;d)=\sum_{i=1}^{s'}\left(\left(\prod_{j=1}^q s(|I_j|-1+S'_{i,j};[|p_\ell|:\ell\in I_j])\right)s'(S'_{i,q+1};[|p_\ell|:\ell\in I_{q+1}])\right),
\end{equation*}
where in the sum, $s'=s'(d-|\mathcal{M}|;|P_1|-|I_1|+1,\ldots,|P_q|-|I_q|+1,|P_{q+1}|)$, and for $1\leq i\leq q+1$, $S_{i,j}'=S'(d-|\mathcal{M}|;|P_1|-|I_1|+1,\ldots,|P_q|-|I_q|+1,|P_{q+1}|;i;j)$, and where $[|p_\ell|:\ell\in I_j]$ stands for the sequence of elements $|p_\ell|$ for all $\ell\in I_j$.

\end{thm}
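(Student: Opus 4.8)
The plan is to count minimum connected forcing sets of $T$ by combining Lemma \ref{MR_lemma} and Theorem \ref{tree_thm}: a set of size $d$ is a connected forcing set of $T$ if and only if it contains an $\mathcal{M}$-set. Every $\mathcal{M}$-set contains the full set $R_2(T)\cup R_3(T)$ of mandatory vertices together with, for each degree-$\geq 3$ vertex $v$, all but one of the bases of its pendent paths. So I would fix the ``skeleton'' consisting of all mandatory vertices (of which there are $|\mathcal{M}|$ many, for a suitably defined canonical count), and then describe the remaining $d-|\mathcal{M}|$ vertices as a choice of how to extend into the pendent paths, subject to connectivity.

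First I would observe that any connected forcing set $R$ of $T$ is determined by (i) which vertices it picks up inside each pendent-path bundle $P_j$, and (ii) how those picks sit inside $T-($pendent paths$)$, and that the latter is forced: $R$ restricted to the non-pendent part of $T$ must be exactly the mandatory skeleton, since $T$ minus its pendent paths is a subtree all of whose vertices of degree $\geq 3$ (in $T$) are cut vertices generating $\geq 3$ components, hence lie in $R_3$. The real content is therefore the bookkeeping over the pendent paths. For a vertex $v_j\in R_3^2$ with pendent paths indexed by $I_j$, connectivity forces that the picked vertices in each such path form a (possibly empty) initial segment starting at the base; $R$ must include the bases of all but one of these paths; and if $R$ includes any vertex of the ``free'' path it must include that path's base too. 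Writing $S'_{i,j}$ for the number of ``extra'' vertices allotted to bundle $P_j$ beyond its mandatory bases, the number of ways to distribute those extras among the $|I_j|$ paths — with the constraint that $|I_j|-1$ of them already contribute their base and one is free — is exactly $s(|I_j|-1+S'_{i,j};[|p_\ell|:\ell\in I_j])$, since after accounting for the mandatory bases we are choosing positive initial segments from $|I_j|$ paths summing to $|I_j|-1+S'_{i,j}$ with capacity $|p_\ell|$. For the pendent paths attached to the $R_3^1$ vertices (bundle $P_{q+1}$, capacity $|P_{q+1}|$), a pendent path's base is not mandatory, so the count is the nonnegative version $s'(S'_{i,q+1};[|p_\ell|:\ell\in I_{q+1}])$.

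Then I would sum over all admissible ways of splitting the $d-|\mathcal{M}|$ extra vertices among the $q+1$ bundles. The number of extras a bundle $P_j$ ($j\leq q$) can absorb ranges from $0$ to $|P_j|-|I_j|+1$ (all vertices of all its paths, minus the $|I_j|-1$ already-counted mandatory bases), while $P_{q+1}$ absorbs from $0$ to $|P_{q+1}|$; this is precisely the index set $\mathcal{S}'(d-|\mathcal{M}|;|P_1|-|I_1|+1,\ldots,|P_q|-|I_q|+1,|P_{q+1}|)$, giving the outer sum $\sum_{i=1}^{s'}$. Since the choices in distinct bundles are independent once the split vector $(S'_{i,1},\ldots,S'_{i,q+1})$ is fixed, the count multiplies across bundles, yielding the stated product formula. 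Finally I would double-check the edge conditions: that $|\mathcal{M}|$ is well-defined independent of which base is designated ``free'' (it is, since the designation only moves which base is mandatory, not how many), and that the ranges are never negative, so the formula correctly returns $0$ when $d$ is too small or too large.

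The main obstacle I expect is the careful verification that connectivity of $T[R]$ translates exactly into the ``positive initial segments summing to $|I_j|-1+S'_{i,j}$'' condition for the $R_3^2$-bundles — in particular handling the interplay between the one free (possibly empty) pendent path at $v_j$ and the mandatory ones, and confirming that no additional global connectivity constraint links different bundles (it does not, because every $v_j$ is itself in the mandatory skeleton, so all bundles hang off an already-connected core). Establishing this cleanly, and matching the reindexing between $\mathcal{S}$ with forced lower bounds and $\mathcal{S}'$ without, is where the proof needs the most care; the rest is the routine product/sum decomposition justified by independence of the bundles.
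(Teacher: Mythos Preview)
Your approach is the paper's: use Lemma~\ref{MR_lemma} and Theorem~\ref{tree_thm} to reduce to counting connected supersets of an $\mathcal{M}$-set, observe that such a set restricted to each pendent path is an initial segment from the base, partition by the split vector $(S'_{i,1},\ldots,S'_{i,q+1})$ of ``extra'' vertices among the bundles $P_1,\ldots,P_{q+1}$, and multiply across bundles by independence. The skeleton argument, the outer index set $\mathcal{S}'(d-|\mathcal{M}|;\ldots)$, and the treatment of the $R_3^1$-bundle via $s'$ all match the paper exactly.

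The one place where your argument does not quite close is precisely the obstacle you flag yourself: the count inside an $R_3^2$-bundle. You correctly state the constraint (all but one base mandatory, the remaining path possibly empty), but then justify the bundle count as ``positive initial segments from $|I_j|$ paths summing to $|I_j|-1+S'_{i,j}$.'' Read against Definition~\ref{definition_tuples}, that is the strictly all-positive count, and it misses every configuration in which the free path at $v_j$ is empty --- for instance it returns $0$ when $S'_{i,j}=0$, whereas there are $|I_j|$ valid assignments in that case (choose which single path gets no vertex). The paper handles this by appealing to Proposition~\ref{prop_spider_cf}: the number of admissible distributions at an $R_3^2$-bundle is the spider count, namely the all-nonempty term $s(|I_j|-1+S'_{i,j};[|p_\ell|])$ \emph{plus} one term for each choice of which path is left empty. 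The worked Example~\ref{example_tree_cf_poly} confirms this reading (it evaluates, e.g., $s(1;1,1)=2$, which is the spider value; the literal Definition~\ref{definition_tuples} value would be $0$). Once you replace your all-positive justification with the at-most-one-empty spider count, the rest of your outline goes through unchanged.
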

\begin{proof}
By Lemma \ref{MR_lemma} and Theorem \ref{tree_thm}, every connected set of vertices which contains $R_2\cup R_3$ and all-but-one bases of pendant paths indexed by $I_j$, $1\leq j\leq q$, is a connected forcing set of $T$. Let $R$ be a connected forcing set of $T$ of size $d$. Then, $|R_2\cup R_3|$ of the vertices of $R$ are taken up by the vertices in $R_2\cup R_3$, and at least $|I_j|-1$ of the vertices of $R$ are taken up by vertices in $P_j$ for $1\leq j\leq q$. Thus, the remaining $d-|R_2\cup R_3|-(|I_1|-1)-\ldots-(|I_q|-1)=d-|\mathcal{M}|$ vertices of $R$ can be taken up by any of the sets $P_j$, $1\leq j\leq q+1$, as long as the number of vertices added to $P_j$ does not exceed $|P_j|-(|I_j|-1)$ for $1\leq j\leq q$ and does not exceed $|P_j|$ for $j=q+1$. By Definition \ref{definition_tuples}, the possible assignments of vertices according to these conditions are described by 
\begin{equation*}
\mathcal{S}'(d-|\mathcal{M}|;|P_1|-|I_1|+1,\ldots,|P_q|-|I_q|+1,|P_{q+1}|)=\{S'_1,\ldots,S'_{s'}\}.
\end{equation*}

Thus, the set of connected forcing sets of $T$ of size $d$ can be partitioned into $s'$ parts, where the $i^\text{th}$ part consists of the connected forcing sets of $G$ of size $d$ where $|I_j|-1+S_{i,j}'$ of the colored vertices are in $P_j$, $1\leq j\leq q$, and $S_{i,q+1}'$ of the colored vertices are in $P_{q+1}$. Moreover, when $|I_j|-1+S_{i,j}'$ vertices are allotted to $P_j$, $1\leq j\leq q$, they must be distributed among the pendant paths indexed by $I_j$ in such a way that all or all-but-one of them have at least one colored vertex, and so that the pendant path with vertex set $p_\ell$ does not receive more than $|p_\ell|$ vertices, for any $\ell\in I_j$. Similarly, when $S_{i,q+1}'$ vertices are allotted to $P_{q+1}$, they can be distributed among the pendant paths indexed by $I_{q+1}$ without a lower bound, as long as path $p_\ell$ does not receive more than $|p_\ell|$ vertices, for any $\ell\in I_{q+1}$.

Finally, when some number of colored vertices is assigned to the sets $P_1,\ldots,P_{q+1}$, one can distribute the colored vertices allotted to $P_j$ to the specific pendant paths in $P_j$ independently for each $P_j$. Thus, the number of connected forcing sets of $G$ of size $d$ where $|I_j|-1+S_{i,j}$ of the colored vertices are in $P_j$, $1\leq j\leq q$, and $S_{i,q+1}'$ of the colored vertices are in $P_{q+1}$, is equal to the product of the number of ways to assign $|I_j|-1+S_{i,j}$ and $S_{i,q+1}'$ colored vertices, respectively, to the sets $P_j$, $1\leq j\leq q$ and $P_{q+1}$. In turn, by a similar argument as in Proposition \ref{prop_spider_cf}, the number of ways to assign $|I_j|-1+S_{i,j}'$ vertices to $P_j$, $1\leq j\leq q$, is $s(|I_j|-1+S_{i,j}';[|p_\ell|:\ell\in I_j])$, where $[|p_\ell|:\ell\in I_j]$ stands for the sequence of elements $|p_\ell|$ for all $\ell\in I_j$. Similarly, the number of ways to assign $S_{i,q+1}'$ vertices to $P_{q+1}$ is $s'(S_{i,q+1}';[|p_\ell|:\ell\in I_{q+1}])$. Thus, multiplying for $1\leq j\leq q+1$ and summing over $1\leq i\leq s'=s'(d-|\mathcal{M}|;|P_1|-|I_1|+1,\ldots,|P_q|-|I_q|+1,|P_{q+1}|)$, we conclude that the number of connected forcing sets of $G$ of size $d$ is

\begin{equation*}
z_c(T;d)=\sum_{i=1}^{s'}\left(\left(\prod_{j=1}^q s(|I_j|-1+S'_{i,j};[|p_\ell|:\ell\in I_j])\right)s'(S'_{i,q+1};[|p_\ell|:\ell\in I_{q+1}])\right).
\end{equation*}
\end{proof}

\begin{figure}[htb]
\begin{center}
\begin{tikzpicture}[scale=1,style=thick,x=1cm,y=1cm]
\def\vr{2.5pt} 

\path (0, 1) coordinate (v1) node[above] {$v_1$};
\path (1, 1) coordinate (v2);
\path (2, 1) coordinate (v3);
\path (3, 1) coordinate (v4) node[above] {$v_2$};

\path (-1, 0) coordinate (v11) node[below] {$p_1$};
\path (0, 0) coordinate (v12) node[below] {$p_2$};

\path (1, 0) coordinate (v21);
\path (1, -1) coordinate (v211) node[below] {$p_3$};

\path (2, 0) coordinate (v31) node[below] {$p_4$};

\path (3, 0) coordinate (v41) node[below] {$p_5$};
\path (4, 0) coordinate (v42);
\path (4, -1) coordinate (v421) node[below] {$p_6$};

\draw (v1) -- (v2);
\draw (v2) -- (v3);
\draw (v3) -- (v4);
\draw (v1) -- (v11);
\draw (v1) -- (v12);

\draw (v2) -- (v21);
\draw (v21) -- (v211);

\draw (v3) -- (v31);

\draw (v4) -- (v41);
\draw (v4) -- (v42);
\draw (v42) -- (v421);

\draw (v1) [fill=white] circle (\vr);
\draw (v2) [fill=white] circle (\vr);
\draw (v3) [fill=white] circle (\vr);
\draw (v4) [fill=white] circle (\vr);

\draw (v11) [fill=white] circle (\vr);
\draw (v12) [fill=white] circle (\vr);

\draw (v21) [fill=white] circle (\vr);
\draw (v211) [fill=white] circle (\vr);

\draw (v31) [fill=white] circle (\vr);

\draw (v41) [fill=white] circle (\vr);
\draw (v42) [fill=white] circle (\vr);
\draw (v421) [fill=white] circle (\vr);

\end{tikzpicture}
\caption[A tree for which $z_c(T;8)$ is computed]{A tree $T$ for which $z_c(T;8)$ is computed in Example \ref{example_tree_cf_poly}.}
\label{fig:cf_example_tree}
\end{center}
\end{figure}
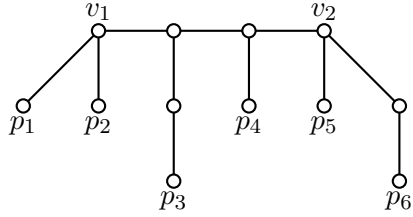

\begin{example}
\label{example_tree_cf_poly}
In this example, we will apply Theorem \ref{theorem_cf_poly_tree} to enumerate the connected forcing sets of size $d=8$ for the tree $T$ shown in Figure \ref{fig:cf_example_tree}. 

The pendant paths of $T$ are labeled $p_1,\ldots,p_6$. First, note that $Z_c(T)=|\mathcal{M}(T)|=6$ and $R_3^2=\{v_1,v_2\}$. Thus, $I_1=\{1,2\}$, $I_2=\{5,6\}$, $I_3=\{3,4\}$, and $P_1=p_1\cup p_2$, $P_2=p_5\cup p_6$, $P_3=p_3\cup p_4$. The total number of vertices in these sets is $|P_1|=2$, $|P_2|=3$, $|P_3|=3$. Using these values, we compute
\begin{eqnarray*}
&&\mathcal{S}'(d-|\mathcal{M}|;|P_1|-|I_1|+1,|P_2|-|I_2|+1,|P_3|)=\mathcal{S}'(2;1,2,3)=\\
&&=\{(1,1,0),(0,2,0),(1,0,1),(0,1,1),(0,0,2)\}=\{S_1',S_2',S_3',S_4',S_5'\}
\end{eqnarray*}
We now apply the formula for $z_c(T;d)$ given in Theorem \ref{theorem_cf_poly_tree}.
\begin{eqnarray*}
z_c(T;d)&=&\sum_{i=1}^{s'}\left(\left(\prod_{j=1}^q s(|I_j|-1+S'_{i,j};[|p_\ell|:\ell\in I_j])\right)s'(S'_{i,q+1};[|p_\ell|:\ell\in I_{q+1}])\right)\\
z_c(T;8)&=&\sum_{i=1}^{5}\Big(s(2-1+S'_{i,1};|p_1|,|p_2|)s(2-1+S'_{i,2};|p_5|,|p_6|)\Big)s'(S'_{i,3};|p_3|,|p_4|)\\
&=&\sum_{i=1}^{5}s(1+S'_{i,1};1,1)s(1+S'_{i,2};1,2)s'(S'_{i,3};2,1)\\
&=&s(2;1,1)s(2;2,1)s'(0;1,2)+s(1;1,1)s(3;2,1)s'(0;1,2)+\\
&&s(2;1,1)s(1;2,1)s'(1;1,2)+s(1;1,1)s(2;2,1)s'(1;1,2)+\\
&&s(1;1,1)s(1;2,1)s'(2;1,2)\\
&=&1\cdot 2\cdot 1+2\cdot 1\cdot 1+1\cdot 2\cdot 2+2\cdot 2\cdot 2+2\cdot 2\cdot 2=24
\end{eqnarray*}
Thus, there are 24 different connected forcing sets of size 8. These are shown in Figure~\ref{fig:cf_example_tree2}, where the panel showing the connected forcing sets corresponding to $S_1'$, $S_2'$, $S_3'$, $S_4'$, and $S_5'$, respectively, is the left-top, left-bottom, middle-left, middle-right, and right panel.

\newcommand{\TreeWithColors}[1]{
\begin{tikzpicture}[scale=0.35, style=thick, x=1cm, y=1cm]
\def\vr{4.5pt}

\path (0, 0) coordinate (v1);
\path (1, 0) coordinate (v2);
\path (2, 0) coordinate (v3);
\path (3, 0) coordinate (v4);
\path (-1, -1) coordinate (v11);
\path (0, -1) coordinate (v12);
\path (1, -1) coordinate (v21);
\path (1, -2) coordinate (v211);
\path (2, -1) coordinate (v31);
\path (3, -1) coordinate (v41);
\path (4, -1) coordinate (v42);
\path (4, -2) coordinate (v421);

\draw (v1) -- (v2);
\draw (v2) -- (v3);
\draw (v3) -- (v4);
\draw (v1) -- (v11);
\draw (v1) -- (v12);
\draw (v2) -- (v21);
\draw (v21) -- (v211);
\draw (v3) -- (v31);
\draw (v4) -- (v41);
\draw (v4) -- (v42);
\draw (v42) -- (v421);

\foreach \v in {v1,v2,v3,v4,v11,v12,v21,v211,v31,v41,v42,v421}
  \draw (\v) [fill=white] circle (\vr);

\foreach \v in {#1}
  \draw (\v) [fill=blue] circle (\vr);

\end{tikzpicture}
}

\begin{figure}[htb]
\centering
\includegraphics[width=1\textwidth]{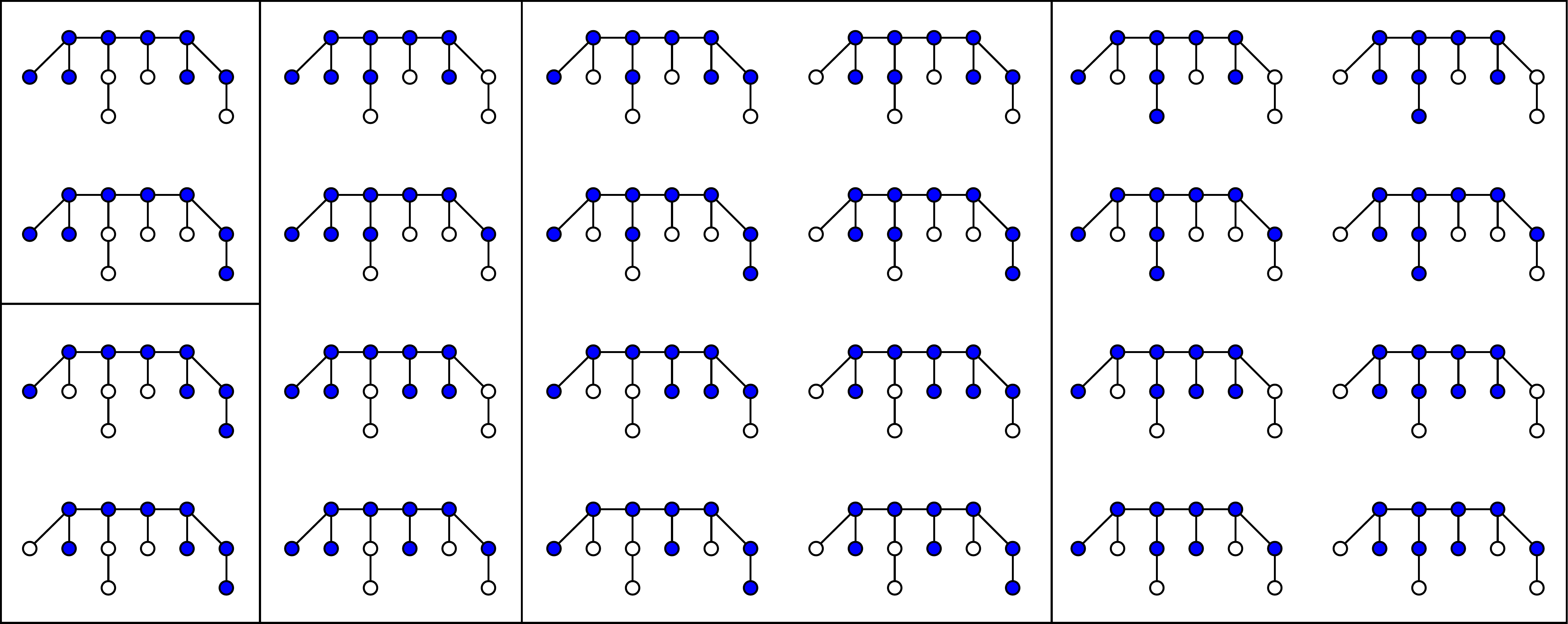}

\caption{All connected forcing sets of size 8 for the tree $T$.}
\label{fig:cf_example_tree2}
\end{figure}
\end{example}

\section{Graph Operations and CF-Density}
\label{sec:cartesian-alt}
In this section we explore the effect of Cartesian products, coronas, and joins on the total forcing and connected forcing numbers of graphs. We then derive conditions under which CF-density is preserved under these operations. This allows us to generate new families of ZTCF-dense graphs in addition to the ones identified in Sections \ref{sec:basics} and \ref{sec:trees-and-unicyclic}. 
\subsection{Cartesian products}

The \emph{Cartesian product} of graphs $G$ and $H$, denoted $G \cprod H$, is the graph with vertex set $V(G)\times V(H)$, where vertices $(g_1,h_1)$ and $(g_2,h_2) $ are adjacent if and only if either $ g_1 = g_2 $ and $ h_1h_2 \in E(H) $, or $ h_1 = h_2 $ and $ g_1g_2 \in E(G) $. We begin with a lemma extending a result from \cite{AIM-Workshop} about the zero forcing number of the Cartesian product of two graphs.

\begin{lem}\label{zfbox}
If $B$ is a zero forcing set of $G$, then $B'=\{(g,h):g \in B, h \in V(H)\}$ is a zero forcing set of $G \cprod H$.
\end{lem}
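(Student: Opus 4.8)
The plan is to run the zero forcing process of $G$ simultaneously on all $H$-fibers. First I would fix a chronological list of forces $g_1 \to g_1',\, g_2 \to g_2',\, \ldots,\, g_k \to g_k'$ witnessing that $B$ is a zero forcing set of $G$; the targets $g_1',\ldots,g_k'$ are distinct, none lies in $B$, and $B \cup \{g_1',\ldots,g_k'\} = V(G)$. Write $B^{(0)} = B$ and $B^{(i)} = B \cup \{g_1',\ldots,g_i'\}$ for $1 \le i \le k$. By the definition of a chronological list, $g_i \in B^{(i-1)}$, the vertex $g_i'$ is outside $B^{(i-1)}$, and every other neighbor of $g_i$ in $G$ lies in $B^{(i-1)}$.

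The key claim, proved by induction on $i$, is that starting from $B'$ one can color blue exactly the set $B^{(i)} \times V(H) = \{(g,h) : g \in B^{(i)},\ h \in V(H)\}$. The base case $i=0$ is immediate, since $B' = B^{(0)} \times V(H)$. For the inductive step, assume the blue set is $B^{(i-1)} \times V(H)$ and fix any $h \in V(H)$. The neighbors of $(g_i,h)$ in $G \cprod H$ are the vertices $(g_i,h'')$ with $h'' \in N_H(h)$ — all blue, because $g_i \in B^{(i-1)}$ — together with the vertices $(g',h)$ with $g' \in N_G(g_i)$; among the latter, $(g_i',h)$ is the unique one not in $B^{(i-1)} \times V(H)$. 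Hence $(g_i,h)$ has the single white neighbor $(g_i',h)$ and may force it. Performing this force for every $h \in V(H)$ (in any order) adds precisely the fiber over $g_i'$, giving the blue set $B^{(i)} \times V(H)$. Taking $i=k$ yields $B^{(k)} \times V(H) = V(G) \times V(H) = V(G \cprod H)$, so $B'$ is a zero forcing set.

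The step requiring the most care is the inductive one, specifically checking that processing a whole fiber at once is legitimate: one must verify that the forces $(g_i,h_1) \to (g_i',h_1)$ and $(g_i,h_2) \to (g_i',h_2)$ for $h_1 \ne h_2$ do not interfere. This is where the product structure is used — $(g_i',h_1)$ and $(g_i,h_2)$ share neither first nor second coordinate-plus-adjacency relation, so $(g_i',h_1) \notin N_{G \cprod H}((g_i,h_2))$, and moreover each target $(g_i',h)$ is white at the start of round $i$ because $g_i' \notin B^{(i-1)}$. With that observation in hand, the argument is just a direct unwinding of the definition of the Cartesian product.
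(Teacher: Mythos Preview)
Your proof is correct and follows essentially the same approach as the paper: both lift a forcing sequence in $G$ to $G\cprod H$ by induction, showing that at each stage the blue set is exactly the product of the current blue set in $G$ with all of $V(H)$. The only cosmetic difference is that you work with a chronological list of individual forces while the paper works with the sequence of blue sets $B_0,B_1,\dots$ (allowing several forces per step); your explicit non-interference check for forces within a single fiber is a nice touch that the paper leaves implicit.
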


\begin{proof}
We verify that $ B' $ is a zero forcing set. Let $ B = B_0, B_1, B_2, \dots $ be a chronological zero forcing sequence for $ B $ on $ G $, and define corresponding sets $ B_k' = \{ (g,h) : g \in B_k, h \in V(H) \} $. We show by induction on $ k $ that all vertices in $ B_k' $ are blue at time step $ k $ in the forcing process on $ G \cprod H $ starting from $ B_0' $.

The base case $ k = 0 $ is immediate. Assume the inductive hypothesis holds for step $ k $, and let $ v \in B_{k+1} \setminus B_k $. Then there exists a vertex $ u \in B_k $ such that $ v $ is the unique white neighbor of $ u $ in $ G $. For each $ h \in V(H) $, $ (u,h) \in B_k' $ has neighbor $ (v,h) $ in $ G \cprod H $, and $ (v,h) $ is the unique white neighbor of $ (u,h) $. Thus, each $ (u,h) $ forces $ (v,h) $ at time step $ k+1 $. It follows that $ B_{k+1}' $ is entirely blue. Therefore, $ B' $ is a zero forcing set.
\end{proof}

Using Lemma \ref{zfbox}, we derive the following sufficient condition about when Cartesian products preserve ZF-density.

\begin{thm}\label{t:cart_z}
Let $ G $ and $ H $ be graphs. If $ G $ is ZF-dense and
\[
Z(G \cprod H) = Z(G) \cdot |V(H)|,
\]
then $ G \cprod H $ is ZF-dense.
\end{thm}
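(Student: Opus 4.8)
The plan is to show that every vertex $(g,h) \in V(G \cprod H)$ lies in some minimum zero forcing set of $G \cprod H$, using the structural assumption $Z(G \cprod H) = Z(G) \cdot |V(H)|$ together with Lemma~\ref{zfbox} and the ZF-density of $G$. First I would fix an arbitrary vertex $(g,h)$. Since $G$ is ZF-dense, there is a minimum zero forcing set $B$ of $G$ with $g \in B$, so $|B| = Z(G)$. Applying Lemma~\ref{zfbox} to $B$, the set $B' = \{(g',h') : g' \in B,\ h' \in V(H)\}$ is a zero forcing set of $G \cprod H$, and $|B'| = |B| \cdot |V(H)| = Z(G) \cdot |V(H)| = Z(G \cprod H)$, so $B'$ is in fact a \emph{minimum} zero forcing set. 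Since $g \in B$, we have $(g,h) \in B'$, and this exhibits $(g,h)$ in a minimum zero forcing set.

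Carrying this out: the argument above is essentially complete as stated, so the main steps in order are (1) invoke ZF-density of $G$ to get a minimum zero forcing set $B \ni g$; (2) invoke Lemma~\ref{zfbox} to lift $B$ to a zero forcing set $B'$ of the product; (3) compute $|B'| = Z(G)\cdot|V(H)|$ and use the hypothesis to conclude $B'$ is minimum; (4) observe $(g,h) \in B'$ and note that $(g,h)$ was arbitrary. I would present this directly since no real obstacle arises once Lemma~\ref{zfbox} and the cardinality hypothesis are in hand.

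If there is any subtlety to watch for, it is purely bookkeeping: one must make sure that the lower bound $Z(G \cprod H) \le |B'|$ is the trivial direction (any zero forcing set has size at least $Z(G \cprod H)$) and that the hypothesis supplies the matching upper bound, so that equality holds and ``minimum'' is justified — there is no need to independently verify $Z(G \cprod H) \ge Z(G)\cdot|V(H)|$, as that inequality is exactly what the hypothesis asserts (and in fact follows from the known lower bound results the hypothesis is built on). The proof therefore reduces to a one-paragraph assembly of previously established facts, and I do not anticipate any hard step.
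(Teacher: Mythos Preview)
Your proposal is correct and matches the paper's proof essentially line for line: fix an arbitrary vertex $(g,h)$, use ZF-density of $G$ to find a minimum zero forcing set $B \ni g$, lift via Lemma~\ref{zfbox} to $B'$, and use the cardinality hypothesis to conclude $B'$ is minimum. There is nothing to add.
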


\begin{proof}
Fix an arbitrary vertex $ (g_0, h_0) \in V(G \cprod H) $. Since $ G $ is ZF-dense, there exists a minimum zero forcing set $ B \subseteq V(G) $ such that $ g_0 \in B $, with $ |B| = Z(G) $. 
 Define the set $B'$ as in Lemma \ref{zfbox}, and note that $B'$ is a zero forcing set of $G \cprod H$.  Finally, since $ |B'| = |B| \cdot |V(H)| = Z(G) \cdot |V(H)| = Z(G \cprod H) $, $ B' $ is a minimum zero forcing set. Since $ (g_0, h_0) \in V(G \cprod H) $ was arbitrary, it follows that every vertex belongs to some minimum zero forcing set, and $ G \cprod H $ is ZF-dense.
\end{proof}

Next, we derive a sufficient condition about when Cartesian products preserve TF-density.

\begin{lem}\label{l:cart_t}
Let $G$ be a graph and $H$ be a graph with no isolated vertices.  If $B$ is a zero forcing set of $G$, then $B'=\{(g,h):g \in B, h \in V(H)\}$ is a total forcing set of $G \cprod H$ and $Z_t(G \cprod  H) \leq Z(G)\cdot|V(H)|.$
\end{lem}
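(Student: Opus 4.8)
The plan is to leverage Lemma~\ref{zfbox}, which already tells us that $B' = \{(g,h) : g \in B,\ h \in V(H)\}$ is a zero forcing set of $G \cprod H$, and then to show that the additional hypothesis on $H$ (no isolated vertices) forces $G \cprod H[B']$ to have no isolated vertices. Since $Z_t$ is the minimum size of a zero forcing set inducing a subgraph with no isolated vertices, verifying these two properties immediately gives $Z_t(G \cprod H) \le |B'| = |B| \cdot |V(H)| = Z(G) \cdot |V(H)|$ once we take $B$ to be a \emph{minimum} zero forcing set of $G$.

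First I would restate that $B'$ is a zero forcing set of $G \cprod H$ by direct appeal to Lemma~\ref{zfbox}; nothing new is needed there. Next I would check that $(G \cprod H)[B']$ has no isolated vertices. Take any vertex $(g,h) \in B'$, so $g \in B$ and $h \in V(H)$. Since $H$ has no isolated vertices, $h$ has a neighbor $h' \in V(H)$, and then $(g,h') \in B'$ as well (because $g \in B$), and $(g,h)(g,h') \in E(G \cprod H)$ by the definition of the Cartesian product. Hence $(g,h)$ is not isolated in $(G \cprod H)[B']$. This shows $B'$ is a total forcing set of $G \cprod H$.

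Finally, choosing $B$ to be a minimum zero forcing set of $G$ gives $|B| = Z(G)$, so $B'$ is a total forcing set of size $Z(G) \cdot |V(H)|$, and therefore $Z_t(G \cprod H) \le Z(G) \cdot |V(H)|$. I do not anticipate a serious obstacle here: the zero-forcing half is handed to us by the preceding lemma, and the ``no isolated vertices'' half is a one-line consequence of the product structure together with $\delta(H) \ge 1$. The only point requiring any care is making sure the neighbor $(g,h')$ we produce actually lies in $B'$ — which it does precisely because membership in $B'$ depends only on the $G$-coordinate lying in $B$, and we did not change that coordinate.
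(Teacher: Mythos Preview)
Your proposal is correct and matches the paper's own proof essentially line for line: invoke Lemma~\ref{zfbox} for the zero forcing property, observe that $H$ having no isolated vertices forces $(G\cprod H)[B']$ to have no isolated vertices, and take $B$ minimum to obtain the bound. If anything, you supply more detail than the paper does in verifying the isolate-free condition, but the argument is the same.
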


\begin{proof}
Let $B \subseteq V(G)$ be a minimum zero forcing set of $G$.  Next note that by Lemma \ref{zfbox}, $B'$ is a zero forcing set of $G \cprod H$.  Finally, since $H$ has no isolated vertices, $(G \cprod H)[B']$ contains no isolated vertices.  Thus $B'$ is a total forcing set of $G \cprod H$, and $Z_t(G \cprod  H) \leq |B'|=Z(G)\cdot |V(H)|$.
\end{proof}

\begin{cor}
If $G$ and $H$ are graphs with no isolated vertices, then
\[Z_t(G \cprod H) \leq \min\{Z(G) \cdot |V(H)|, Z(H) \cdot |V(G)|\}.\]
\end{cor}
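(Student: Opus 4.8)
The plan is to apply Lemma~\ref{l:cart_t} twice, once to each of the two graphs, and then take the smaller of the two resulting bounds. Concretely, I would first invoke Lemma~\ref{l:cart_t} with the roles of $G$ and $H$ as stated: since $H$ has no isolated vertices, the lemma gives $Z_t(G \cprod H) \leq Z(G)\cdot|V(H)|$. Next I would observe that the Cartesian product is commutative up to isomorphism, that is, $G \cprod H \simeq H \cprod G$, and that isomorphic graphs have the same total forcing number. Applying Lemma~\ref{l:cart_t} again, now with $H$ playing the role of the first graph and $G$ the second (valid because $G$ also has no isolated vertices), yields $Z_t(H \cprod G) \leq Z(H)\cdot|V(G)|$, hence $Z_t(G \cprod H) \leq Z(H)\cdot|V(G)|$.

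Combining the two inequalities, $Z_t(G \cprod H)$ is at most each of $Z(G)\cdot|V(H)|$ and $Z(H)\cdot|V(G)|$, so it is at most their minimum, which is exactly the claimed bound. I would write this out in two or three sentences.

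There is essentially no obstacle here; the only point requiring a word of care is the symmetry argument. Lemma~\ref{l:cart_t} is stated asymmetrically (it requires only the \emph{second} factor to be isolate-free), so to use it in the reversed order I must note both that $G \cprod H \cong H \cprod G$ and that the hypothesis ``$G$ has no isolated vertices'' supplies the needed isolate-freeness of the second factor in the swapped product. Everything else is immediate from the lemma and the definition of minimum.

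\begin{proof}
Since $H$ has no isolated vertices, Lemma~\ref{l:cart_t} gives $Z_t(G \cprod H) \leq Z(G)\cdot|V(H)|$. The Cartesian product is commutative, so $G \cprod H \simeq H \cprod G$, and isomorphic graphs have equal total forcing numbers; since $G$ also has no isolated vertices, applying Lemma~\ref{l:cart_t} to $H \cprod G$ yields $Z_t(G \cprod H) = Z_t(H \cprod G) \leq Z(H)\cdot|V(G)|$. Taking the minimum of the two bounds completes the proof.
\end{proof}
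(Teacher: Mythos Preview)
Your proof is correct and follows exactly the approach the paper intends: the corollary is stated immediately after Lemma~\ref{l:cart_t} without proof, so it is meant to follow by applying that lemma in both orders using the commutativity of the Cartesian product. Your attention to the asymmetric hypothesis of the lemma (only the second factor need be isolate-free) and the use of the assumption on both $G$ and $H$ to justify the swapped application is precisely the care required.
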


\begin{thm}\label{t:cart_t}
Let $ G $ be a ZF-dense graph and $ H $ be a graph with no isolated vertices. If
\[
Z_t(G \cprod H) = Z(G) \cdot |V(H)|,
\]
then $ G \cprod H $ is TF-dense.
\end{thm}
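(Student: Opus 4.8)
The plan is to mimic the proof of Theorem \ref{t:cart_z} almost verbatim, substituting Lemma \ref{l:cart_t} for Lemma \ref{zfbox}. The key observation is that ZF-density of $G$ lets us place any prescribed vertex of $G$ into a minimum zero forcing set, and the ``blow-up'' construction $B \mapsto B'$ transfers this to $G \cprod H$ while exactly meeting the cardinality hypothesis, so minimality is automatic.

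Concretely, I would fix an arbitrary vertex $(g_0, h_0) \in V(G \cprod H)$. Since $G$ is ZF-dense, choose a minimum zero forcing set $B \subseteq V(G)$ with $g_0 \in B$ and $|B| = Z(G)$. Set $B' = \{(g,h) : g \in B,\ h \in V(H)\}$ as in Lemma \ref{l:cart_t}; because $H$ has no isolated vertices, that lemma tells us $B'$ is a total forcing set of $G \cprod H$. Next I would compute $|B'| = |B|\cdot|V(H)| = Z(G)\cdot|V(H)|$, which by the hypothesis equals $Z_t(G \cprod H)$; hence $B'$ is a \emph{minimum} total forcing set. Finally, $(g_0,h_0) \in B'$ since $g_0 \in B$, and as $(g_0,h_0)$ was arbitrary, every vertex of $G \cprod H$ lies in some minimum total forcing set, i.e. $G \cprod H$ is TF-dense.

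There is essentially no serious obstacle here: the argument is a direct structural transcription, and all the real work was already done in Lemma \ref{l:cart_t} (which establishes both that $B'$ is a total forcing set and the upper bound $Z_t(G\cprod H)\le Z(G)\cdot|V(H)|$, so the equality hypothesis is precisely what is needed to conclude minimality). The only point worth stating carefully is why $B'$ has no isolated vertices in the induced subgraph — and that is exactly the content of the isolate-free hypothesis on $H$ used in Lemma \ref{l:cart_t}. I would therefore keep the write-up short, explicitly invoking Theorem \ref{t:cart_z}'s proof structure and Lemma \ref{l:cart_t} rather than re-deriving anything.
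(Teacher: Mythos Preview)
Your proposal is correct and follows essentially the same approach as the paper's own proof: fix an arbitrary vertex $(g_0,h_0)$, use ZF-density of $G$ to get a minimum zero forcing set $B$ containing $g_0$, apply Lemma~\ref{l:cart_t} to obtain a total forcing set $B'$ of the right size, and conclude minimality from the cardinality hypothesis. The only (cosmetic) difference is that you explicitly note $(g_0,h_0)\in B'$, which the paper leaves implicit.
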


\begin{proof}
Fix an arbitrary vertex $ (g_0, h_0) \in V(G \cprod H) $. Since $ G $ is ZF-dense, there exists a minimum zero forcing set $ B \subseteq V(G) $ such that $ g_0 \in B $, with $ |B| = Z(G) $. 
 Define the set $B'$ as in Lemma \ref{l:cart_t}, and note that $B'$ is a total forcing set of $G \cprod H$. 
 Finally, since $ |B'| = |B| \cdot |V(H)| = Z(G) \cdot |V(H)| = Z_t(G \cprod H) $, $ B' $ is a minimum total forcing set. Since $ (g_0, h_0) \in V(G \cprod H) $ was arbitrary, it follows that every vertex belongs to some minimum total forcing set, and $ G \cprod H $ is TF-dense.
\end{proof}

Finally, we derive an analogous sufficient condition about CF-density.

\begin{lem}\label{l:cart_c}
Let $G$ and $H$ be connected graphs.  If $B$ is a connected forcing set of $G$, then $B'=\{(g,h):g \in B, h \in V(H)\}$ is a connected forcing set of $G \cprod H$ and
\[Z_c(G \cprod  H) \leq \min\{Z_c(G)\cdot|V(H)|, Z_c(H)\cdot|V(G)|\}.\]
\end{lem}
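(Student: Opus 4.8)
The plan is to mirror the structure of Lemmas~\ref{zfbox} and~\ref{l:cart_t}, splitting the claim into a ``forcing'' part and a ``connectedness'' part. First I would invoke Lemma~\ref{zfbox} verbatim: since a connected forcing set $B$ of $G$ is in particular a zero forcing set of $G$, the set $B'=\{(g,h):g\in B,\ h\in V(H)\}$ is a zero forcing set of $G\cprod H$. So it only remains to check that $(G\cprod H)[B']$ is connected, and then to turn the resulting inequality into the stated minimum.

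For connectedness, I would observe that the subgraph of $G\cprod H$ induced by $B'$ is precisely $G[B]\cprod H$: for $g_1,g_2\in B$, the vertices $(g_1,h_1)$ and $(g_2,h_2)$ are adjacent in $G\cprod H$ iff either $g_1=g_2$ and $h_1h_2\in E(H)$, or $h_1=h_2$ and $g_1g_2\in E(G)$, and the latter is equivalent to $g_1g_2\in E(G[B])$ since $g_1,g_2\in B$. Because $B$ is a connected forcing set, $G[B]$ is connected, and $H$ is connected by hypothesis; the Cartesian product of two connected graphs is connected (given $(g_1,h_1)$ and $(g_2,h_2)$, follow a $g_1$--$g_2$ path in $G[B]$ within the layer $h=h_1$, then an $h_1$--$h_2$ path in $H$ within the layer $g=g_2$). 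Hence $(G\cprod H)[B']=G[B]\cprod H$ is connected, so $B'$ is a connected forcing set of $G\cprod H$.

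Applying this with $B$ a minimum connected forcing set of $G$ gives $|B'|=|B|\cdot|V(H)|=Z_c(G)\cdot|V(H)|$, hence $Z_c(G\cprod H)\le Z_c(G)\cdot|V(H)|$. Since $G\cprod H\simeq H\cprod G$, running the same argument with the roles of $G$ and $H$ exchanged yields $Z_c(G\cprod H)\le Z_c(H)\cdot|V(G)|$, and taking the minimum of the two bounds completes the proof.

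I expect the only point requiring any care is the identification $(G\cprod H)[B']=G[B]\cprod H$ together with the (standard) fact that a Cartesian product of connected graphs is connected; the rest is bookkeeping parallel to the preceding lemmas. It is worth noting that the hypothesis that both $G$ and $H$ are connected is exactly what is used: connectedness of $H$ makes each fiber $\{g\}\times V(H)$ connected, while connectedness of $G[B]$ stitches the fibers together.
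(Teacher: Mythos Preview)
Your proposal is correct and follows essentially the same approach as the paper: invoke Lemma~\ref{zfbox} for the forcing part, then verify connectedness of $(G\cprod H)[B']$ via the same ``move in $G[B]$ within one $H$-layer, then move in $H$ within one $G$-fiber'' path argument, and conclude the bound. Your identification $(G\cprod H)[B']=G[B]\cprod H$ is a slightly cleaner packaging of that connectedness step, and you make the symmetry yielding the $\min$ explicit, whereas the paper leaves it implicit.
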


\begin{proof}
Let $B \subseteq V(G)$ be a minimum connected forcing set of $G$.  Now note that by Lemma \ref{zfbox}, $B'$ is a zero forcing set of $G \cprod H$.  Next we show that $(G \cprod H)[B']$ is connected and thus $B'$ is a connected forcing set of $G \cprod H$.  Fix $ (g_1, h_1), (g_2, h_2) \in B' $. Since $ G[B] $ is connected, there exists a $ g_1 $--$ g_2 $ path in $ G[B] $. Holding $ h_1 $ fixed, this lifts to a path from $ (g_1, h_1) $ to $ (g_2, h_1) $ in $ (G \cprod H)[B'] $. Similarly, since $ H $ is connected, there is a path from $ (g_2, h_1) $ to $ (g_2, h_2) $. Thus, $ (G \cprod H)[B'] $ is connected, $B'$ is a connected forcing set of $G \cprod H$, and $Z_c(G \cprod  H) \leq |B'|=Z_c(G)\cdot |V(H)|$.
\end{proof}

\begin{thm}\label{t:cart_c-alt}
Let $ G $ and $ H $ be connected graphs. If $ G $ CF-dense and 
\[
Z_c(G \cprod H) = Z_c(G) \cdot |V(H)|,
\]
then $ G \cprod H $ is CF-dense.
\end{thm}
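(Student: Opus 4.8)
The plan is to follow verbatim the template of Theorems~\ref{t:cart_z} and~\ref{t:cart_t}, substituting Lemma~\ref{l:cart_c} for the lemmas used there. First I would fix an arbitrary vertex $(g_0, h_0) \in V(G \cprod H)$ and, using the hypothesis that $G$ is CF-dense, select a minimum connected forcing set $B \subseteq V(G)$ with $g_0 \in B$ and $|B| = Z_c(G)$.

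Next I would form the ``product'' set $B' = \{(g,h) : g \in B,\ h \in V(H)\}$. By Lemma~\ref{l:cart_c} --- which applies because $G$ and $H$ are connected and $B$ is a connected forcing set of $G$ --- the set $B'$ is a connected forcing set of $G \cprod H$. Since $|B'| = |B| \cdot |V(H)| = Z_c(G) \cdot |V(H)|$, the assumed equality $Z_c(G \cprod H) = Z_c(G) \cdot |V(H)|$ forces $B'$ to be a minimum connected forcing set. Because $g_0 \in B$, we have $(g_0, h_0) \in B'$; hence the arbitrary vertex $(g_0, h_0)$ lies in a minimum connected forcing set, and $G \cprod H$ is CF-dense.

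I do not expect any genuine obstacle here: all of the structural work (that $B'$ both forces and induces a connected subgraph) is already packaged in Lemma~\ref{l:cart_c}, and the sole hypothesis the connectivity argument there relies on --- that $H$ is connected --- is assumed in the statement. The only thing to watch is not to accidentally weaken the hypotheses: the conclusion really does need both the CF-density of $G$ and the tightness $Z_c(G \cprod H) = Z_c(G)\cdot|V(H)|$, since without the latter $B'$ need not be minimum. It would also be natural to remark that swapping the roles of $G$ and $H$ yields CF-density of $G \cprod H$ whenever $H$ is CF-dense and $Z_c(G \cprod H) = Z_c(H)\cdot|V(G)|$.
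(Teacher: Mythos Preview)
Your proposal is correct and matches the paper's proof essentially line for line: both fix an arbitrary $(g_0,h_0)$, use CF-density of $G$ to pick a minimum connected forcing set $B$ containing $g_0$, invoke Lemma~\ref{l:cart_c} to conclude that $B'=\{(g,h):g\in B,\ h\in V(H)\}$ is a connected forcing set, and then use the hypothesis $Z_c(G\cprod H)=Z_c(G)\cdot|V(H)|$ to certify minimality.
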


\begin{proof}
Fix an arbitrary vertex $ (g_0, h_0) \in V(G \cprod H) $. Since $ G $ is CF-dense, there exists a minimum connected forcing set $ B \subseteq V(G) $ such that $ g_0 \in B $, with $ |B| = Z_c(G) $. Define the set $B'$ as in Lemma \ref{l:cart_c}, and note that $B'$ is a connected forcing set of $G \cprod H$.  Finally, since $ |B'| = |B| \cdot |V(H)| = Z_c(G) \cdot |V(H)| = Z_c(G \cprod H) $, $ B' $ is a minimum connected forcing set. Since $ (g_0, h_0) \in V(G \cprod H) $ was arbitrary, it follows that every vertex belongs to some minimum connected forcing set, and $ G \cprod H $ is CF-dense.
\end{proof}

Theorems \ref{t:cart_z}, \ref{t:cart_t}, and \ref{t:cart_c-alt}  yield the following corollary.

\begin{cor}\label{c:cart}
Let $G$ and $H$ be connected graphs each on at least two vertices, with $G$ ZCF-dense.  If $Z_c(G \cprod H)=Z_c(G)\cdot|V(H)|$ and $Z_t(G \cprod  H)=Z(G \cprod H)=Z(G)\cdot|V(H)|$, then $G \cprod H$ is ZTCF-dense.
\end{cor}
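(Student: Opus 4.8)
The plan is to obtain the three constituent density properties separately and then combine them, invoking the three preceding theorems as black boxes. Since $G$ is ZCF-dense, it is by definition both ZF-dense and CF-dense. Also, since $H$ is connected on at least two vertices, $H$ has no isolated vertices, so the hypotheses of the total-forcing theorem are met. Thus the verification splits into three routine applications.

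First, I would show $G \cprod H$ is ZF-dense: $G$ is ZF-dense and, by hypothesis, $Z(G \cprod H) = Z(G)\cdot|V(H)|$, so Theorem~\ref{t:cart_z} applies directly. Second, I would show $G \cprod H$ is TF-dense: $G$ is ZF-dense, $H$ is isolate-free, and by hypothesis $Z_t(G \cprod H) = Z(G)\cdot|V(H)|$, so Theorem~\ref{t:cart_t} applies. Third, I would show $G \cprod H$ is CF-dense: $G$ and $H$ are connected, $G$ is CF-dense, and by hypothesis $Z_c(G \cprod H) = Z_c(G)\cdot|V(H)|$, so Theorem~\ref{t:cart_c-alt} applies.

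Having established that $G \cprod H$ is simultaneously ZF-dense, TF-dense, and CF-dense, the definition of ZTCF-density gives the conclusion. I do not anticipate any real obstacle here — the only thing to be careful about is checking that each theorem's hypotheses are genuinely implied by the corollary's hypotheses (in particular, that ``connected on at least two vertices'' yields ``no isolated vertices'' for the total-forcing step, and that ZCF-density supplies both the ZF-density needed in the first two steps and the CF-density needed in the third). The chained equalities $Z_t(G \cprod H) = Z(G \cprod H) = Z(G)\cdot|V(H)|$ in the hypothesis conveniently provide exactly the two distinct numerical conditions required by Theorems~\ref{t:cart_z} and~\ref{t:cart_t}.
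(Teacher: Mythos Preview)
Your proposal is correct and matches the paper's approach exactly: the paper simply states that Theorems~\ref{t:cart_z}, \ref{t:cart_t}, and~\ref{t:cart_c-alt} yield the corollary, and your writeup spells out precisely how each theorem's hypotheses are met. The care you took in noting that ZCF-density unpacks into both ZF- and CF-density, and that connectedness on at least two vertices gives isolate-freeness for Theorem~\ref{t:cart_t}, is exactly what is needed.
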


Examples of graphs satisfying the conditions of Corollary \ref{c:cart} (and thus also witnessing the sharpness of the bounds in Lemmas \ref{l:cart_t} and \ref{l:cart_c}) include $P_r \cprod P_s$ for $r,s \geq 2$ and $P_r \cprod C_s$ for $1 \leq r \leq \frac{1}{2}s$.  

\subsection{Graph joins}

The \emph{join} of graphs $G$ and $H$, denoted $G \vee H$, is the graph with vertex set $V(G \vee H)=V(G) \cup V(H)$ and edge set $E(G \vee H)=E(G) \cup E(H) \cup \{gh: g \in V(G), h \in V(H)\}$.  The following results concerning the zero forcing and total forcing numbers of the join of two graphs were introduced in \cite{join} and \cite{DaHePe2023a} respectively. 

\begin{lem}[\cite{join}]\label{l:join:z}
If $G$ and $H$ are graphs, with each either connected or isolate free, then
\[
Z(G \vee H)=\min\{Z(G)+|V(H)|,Z(H)+|V(G)|\}.
\]
\end{lem}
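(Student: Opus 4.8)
The plan is to establish the identity $Z(G \vee H) = \min\{Z(G) + |V(H)|, Z(H) + |V(G)|\}$ in two directions: an upper bound by exhibiting a zero forcing set of the stated size, and a matching lower bound by showing every zero forcing set of $G \vee H$ is at least this large. Since the statement is symmetric in $G$ and $H$, it suffices to work with one of the two terms in the minimum and use symmetry.

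\medskip

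\textbf{Upper bound.} First I would show $Z(G \vee H) \le Z(G) + |V(H)|$ (the other inequality follows by swapping the roles of $G$ and $H$). Let $B \subseteq V(G)$ be a minimum zero forcing set of $G$, so $|B| = Z(G)$, and consider $S = B \cup V(H)$. I claim $S$ is a zero forcing set of $G \vee H$. The idea is that inside $G \vee H$, the vertices of $B$ can carry out the same forcing chain they would in $G$: a blue vertex $u \in B_k$ has, in $G \vee H$, the same white neighbors it had in $G$ among $V(G)$, plus all of $V(H)$ — but $V(H)$ is already blue, so $u$ still has a unique white neighbor and the force goes through. One subtlety: the hypothesis is that $G$ and $H$ are each connected or isolate-free, which is only needed to guarantee $Z(G)$, $Z(H)$ are well defined (a graph has a zero forcing set iff it has no isolated vertex — or rather, every graph has $V$ as a zero forcing set, so this is automatic; the hypothesis really just ensures the formula does not degenerate). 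After all of $V(G)$ turns blue, every vertex of $G \vee H$ is blue, so $S$ is indeed a zero forcing set, giving $Z(G \vee H) \le |B| + |V(H)| = Z(G) + |V(H)|$.

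\medskip

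\textbf{Lower bound.} This is the step I expect to be the main obstacle. Let $S$ be any zero forcing set of $G \vee H$; I must show $|S| \ge \min\{Z(G)+|V(H)|, Z(H)+|V(G)|\}$. Write $S_G = S \cap V(G)$ and $S_H = S \cap V(H)$. The key structural fact is about the \emph{first} force performed in a forcing process from $S$. Consider a blue vertex $w$ that performs the first force. If $w \in V(G)$, then $w$ has at most one white neighbor in all of $G \vee H$; but $w$ is adjacent to every vertex of $V(H)$, so at most one vertex of $V(H)$ is white, i.e. $|S_H| \ge |V(H)| - 1$, and in fact as soon as one force happens across, essentially all of $H$ must already be blue. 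The cleanest way to organize this: argue that either $V(H) \subseteq S$ or $V(G) \subseteq S$, \emph{or} the set of initially-blue vertices already contains a zero forcing set of one side plus nearly all of the other. Concretely, I would show that if $|S_H| < |V(H)|$, then no vertex of $S_G$ can ever force until all but at most one vertex of $V(H)$ is blue — but those vertices can only be forced by vertices already accounting for the count — leading to the conclusion that $S_G$ must itself be a zero forcing set of $G$ (because once we quotient out / ignore $V(H)$, the forces among $V(G)$ are exactly $G$-forces) while $|S_H| \ge |V(H)| - 1$ is not quite enough; one has to be careful that the single possibly-white vertex of $V(H)$ can be forced, which requires it to have a unique white neighbor, pushing toward $S_G$ being all of $V(G)$ or a genuine forcing set. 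I would handle this by a careful case analysis on which side contains the vertex doing each early force, ultimately showing $|S| \ge Z(G) + |V(H)|$ or $|S| \ge Z(H) + |V(G)|$. Combining with the upper bound completes the proof. (Since this lemma is quoted from \cite{join}, I would cite that reference for the full argument and present the above as the proof sketch.)
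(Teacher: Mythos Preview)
The paper does not prove this lemma; it is cited from \cite{join} and used as a black box, so there is no in-paper proof to compare your proposal against. Your upper-bound construction is correct and is exactly the argument the paper gives for the connected-forcing analogue (see the proof of Lemma~\ref{l:join:c}, where $B_G \cup V(H)$ is shown to be a forcing set of $G \vee H$ by running the $G$-forcing process unchanged). Your lower bound is, as you yourself flag, only a sketch: the first-force observation correctly shows that one side, say $V(H)$, has at most one initially white vertex, but you do not carry out the case analysis needed to close the gap between $|S_H| \ge |V(H)|-1$ and the desired $|S| \ge Z(G)+|V(H)|$; this is where the ``connected or isolate-free'' hypothesis does real work (ruling out the need for a cross-side assist on the final vertex). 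Since you explicitly plan to cite \cite{join} for the complete argument, your treatment is at least as detailed as the paper's.
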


\begin{lem}[\cite{DaHePe2023a}]\label{l:join:t}
If $G$ and $H$ are graphs, with each either connected or isolate free, then
\[
Z_t(G \vee H)=\min\{Z(G)+|V(H)|,Z(H)+|V(G)|\}.
\]
\end{lem}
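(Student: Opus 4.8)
The plan is to establish the equality by proving the two inequalities separately. For the upper bound $Z_t(G \vee H) \le \min\{Z(G)+|V(H)|, Z(H)+|V(G)|\}$, I would take (without loss of generality, swapping the roles of $G$ and $H$ if needed) a minimum zero forcing set $B$ of $G$ and form $S = B \cup V(H)$. First I would check that $S$ is a zero forcing set of $G \vee H$: inside $G$ the set $B$ forces all of $G$ as before, since the edges of $G$ are all present in $G \vee H$ and the vertices of $H$ are already blue, so the ``extra'' $H$-neighbors of a $G$-vertex are never white and never obstruct a force; once all of $V(G)$ is blue, all of $V(H)$ is already blue, so the process terminates with everything blue. (A small subtlety: forces within $G$ might be ``used up'' by the cross edges — but since every vertex of $H$ is in $S$ from the start, this never happens, so the chronological forcing sequence for $B$ in $G$ lifts verbatim.) Next I would verify $S$ induces a subgraph with no isolated vertices: every vertex of $H$ is joined to every vertex of $G$, so as long as $B \neq \emptyset$ — which holds since $G$ is connected or isolate-free, hence $Z(G) \ge 1$ — each vertex of $H$ has a blue neighbor in $B$, and each vertex of $B$ has a blue neighbor in $V(H)$, provided $V(H) \neq \emptyset$. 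Edge cases where $H$ or $G$ is trivial would need a line of care, but these follow from the hypotheses. This gives $Z_t(G\vee H) \le |B| + |V(H)| = Z(G) + |V(H)|$, and symmetrically $\le Z(H) + |V(G)|$, hence the upper bound.

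For the lower bound, the cleanest route is to invoke the already-quoted results: by definition every total forcing set is a zero forcing set, so $Z_t(G \vee H) \ge Z(G \vee H)$, and by Lemma~\ref{l:join:z} the right-hand side equals $\min\{Z(G)+|V(H)|, Z(H)+|V(G)|\}$. Combining this with the upper bound from the previous paragraph yields the claimed equality. This is the slick part and essentially free once Lemma~\ref{l:join:z} is in hand.

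The main obstacle — really the only place demanding attention — is the verification that $S = B \cup V(H)$ genuinely induces an isolate-free subgraph in all the allowed degenerate configurations (e.g., $G$ or $H$ consisting of a single vertex, or $|V(H)| = 1$), and confirming that $Z(G) \ge 1$ always holds under the hypothesis ``$G$ is connected or isolate-free'' so that $B$ is nonempty; a graph that is both would have to be $K_1$, in which case $Z(K_1)=1$ still works since the lone vertex gets a neighbor from $V(H)$. I expect the paper's authors likely dispatch this in a sentence, mirroring the analogous argument for Lemma~\ref{l:join:z} from~\cite{join}. No genuinely hard combinatorics is involved; the content is entirely in the observation that the join makes cross edges abundant enough to kill isolated vertices ``for free,'' so that the total forcing number cannot exceed the zero forcing number here, and the reverse inequality is automatic.
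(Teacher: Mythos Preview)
The paper does not actually prove this lemma; it is quoted as a known result from~\cite{DaHePe2023a} and stated without proof. Your argument is correct and is precisely the natural one: construct $B \cup V(H)$ for the upper bound, then sandwich against $Z(G \vee H)$ via Lemma~\ref{l:join:z} for the lower bound. This is exactly the template the paper itself uses when proving the analogous connected forcing statement (Lemma~\ref{l:join:c}), so your approach is in full agreement with the paper's methodology for this family of results.
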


We now extend these results to connected forcing. 
\begin{lem}\label{l:join:c}
If $G$ and $H$ are graphs, with each either connected or isolate free, then 
\begin{enumerate}
\item For each zero forcing set $B_G$ of $G$ and each zero forcing set $B_H$ of $H$, $B_G \cup V(H)$ and $B_H \cup V(H)$ are zero forcing sets of $G \vee H$.
\item $Z_c(G \vee H)=\min\{Z(G)+|V(H)|,Z(H)+|V(G)|\}$.
\end{enumerate}
\end{lem}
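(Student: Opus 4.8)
The plan is to prove both parts together, leveraging the known results on $Z$ and $Z_t$ for joins. For part (1), I would take a zero forcing set $B_G$ of $G$ and consider $B_G \cup V(H)$ in $G \vee H$. Every vertex of $V(H)$ is adjacent to every vertex of $G$, so once $V(H)$ is blue, any vertex $h \in V(H)$ with exactly one white neighbor in $G \vee H$ can force it; but the cleaner argument is that the forcing process on $G$ starting from $B_G$ still works inside the $G$-layer of $G \vee H$, because a blue vertex $g$ with a unique white $G$-neighbor $v$ may now also see white vertices in $H$ — so this requires care. The right approach is: first use the vertices of $V(H)$ to force all of $V(G)$. Indeed, if $|V(G)| \geq 2$, pick any $h \in V(H)$; it is adjacent to all of $V(G)$, so it cannot force directly, but any two vertices of $V(H)$ together with a vertex cover-type argument... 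Actually the simplest route: since $B_G \cup V(H) \supseteq V(H)$ and $|V(H)| \geq 1$, and $H$ is connected or isolate-free, I claim $V(H)$ together with any single additional blue vertex of $G$ forces everything when $|V(G)|$ is small; in general I would argue that $B_G$ being a zero forcing set of $G$ means $B_G \cup V(H)$ forces all of $G \vee H$ by first completing the $G$-layer (white vertices of $G$ get forced exactly as in the original chronology, since a blue $G$-vertex's only neighbors outside $V(G)$ are in $V(H)$, which is already all blue), and then, once $V(G)$ is entirely blue, forcing $V(H)$: a vertex $g \in V(G)$ sees all of $V(H)$, so if only one vertex of $H$ remains white it forces it; to reduce to that case, use that $H$ is connected or isolate-free so an induction/ordering on $V(H)$ works, or simply note $V(G) \cup (\text{any zero forcing set of } H)$ handles the $H$-layer symmetrically. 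I will write this as: $V(H)$ blue $\Rightarrow$ run $G$'s chronology to blue all of $V(G)$ $\Rightarrow$ now all of $V(G)$ is blue, and $V(G) \cup B_H$ zero-forces $H$ inside $G\vee H$ by the same reasoning, so in particular $V(G) \cup V(H)$ is reached. The symmetric statement for $B_H \cup V(H)$ (note: as written in the lemma it is $B_H \cup V(H)$, which contains all of $V(H)$, so this is immediate from the same argument, and in fact $V(H)$ alone plus running is enough once $|V(G)|$ is dealt with).

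For part (2), the lower bound is the crux and the main obstacle. By Observation~\ref{ob:min_degree}, $Z_t(G \vee H) \leq Z_c(G \vee H)$, and by Lemma~\ref{l:join:t}, $Z_t(G \vee H) = \min\{Z(G)+|V(H)|, Z(H)+|V(G)|\}$, giving the lower bound $Z_c(G \vee H) \geq \min\{Z(G)+|V(H)|, Z(H)+|V(G)|\}$ — provided $G \vee H$ is not a path, which holds except in trivial small cases (e.g. $K_1 \vee K_1 = K_2$, $K_1 \vee \overline{K_2} = P_3$) that should be checked separately or excluded by the hypothesis that $G,H$ are connected or isolate-free. For the upper bound, take a minimum zero forcing set $B_G$ of $G$ realizing the minimum (WLOG $Z(G)+|V(H)| \leq Z(H)+|V(G)|$), so $|B_G \cup V(H)| = Z(G)+|V(H)|$. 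By part (1), $B_G \cup V(H)$ is a zero forcing set of $G \vee H$; it remains to show $(G \vee H)[B_G \cup V(H)]$ is connected. Since every vertex of $B_G$ is adjacent to every vertex of $V(H)$ (and $V(H) \neq \emptyset$), and $V(H)$ induces a connected-or-not subgraph — but regardless, $B_G \cup V(H)$ induces a connected subgraph because any two vertices are joined through a common $H$-vertex: $g_1 \sim h \sim g_2$ for $g_1,g_2 \in B_G$, $h \sim h'$ need not hold but $h \sim g \sim h'$ for any $g \in B_G$ (note $B_G \neq \emptyset$ since $Z(G) \geq 1$ for any graph with a vertex). So connectivity is automatic.

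Assembling: $Z_c(G \vee H) \leq |B_G \cup V(H)| = Z(G) + |V(H)| = \min\{Z(G)+|V(H)|, Z(H)+|V(G)|\}$ matches the lower bound, so equality holds. The main obstacle I anticipate is the rigorous verification in part (1) that $B_G \cup V(H)$ actually forces all of $G \vee H$ — specifically handling the order of forces so that the "unique white neighbor" condition is genuinely met at each step, and dispatching the degenerate small cases where $G \vee H$ is a path (so that Observation~\ref{ob:min_degree} applies). Everything else (connectivity of the induced subgraph, the lower bound via $Z_t$) is routine.
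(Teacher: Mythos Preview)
Your approach is essentially the paper's: once $V(H)$ is entirely blue, a blue $G$-vertex has no white neighbors outside $V(G)$, so the $G$-forcing chronology from $B_G$ runs verbatim inside $G\vee H$; connectivity of $(G\vee H)[B_G\cup V(H)]$ follows because any two vertices are at distance at most~2 through the other side. Two cleanups: first, after the $G$-layer is all blue you are \emph{done}---$V(H)$ was blue from the start, so the paragraph about ``then forcing $V(H)$'' is superfluous and the worries about $H$ being connected/isolate-free there are irrelevant for part~(1). Second, for the lower bound in part~(2) the paper simply uses $Z_c(G\vee H)\ge Z(G\vee H)$ (any connected forcing set is a zero forcing set) together with Lemma~\ref{l:join:z}; this holds unconditionally, so you can avoid routing through $Z_t$ and Observation~\ref{ob:min_degree} and thereby sidestep the ``what if $G\vee H$ is a path'' case analysis entirely.
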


\begin{proof}
First let $B_G$ be a zero forcing set of $G$ and consider the set $B=B_G \cup V(H)$.  Since there are no white vertices outside of $G$, forcing in $G \vee H$ can proceed from $B$ exactly as forcing in $G$ proceeds from $B_G$, and thus $B_G \cup V(H)$ is a zero forcing set of $G \vee H$.  Finally, since in $G \vee H$ every vertex in $G$ is adjacent every vertex in $H$, each pair of vertices in $(G \vee H)[B]$ is either adjacent or distance 2.  Thus $(G \vee H)[B]$ is connected, and so $B$ is a connected forcing set of $G \vee H$.  A similar argument shows that for each zero forcing set $B_H$ of $H$, $B_H \cup V(G)$ is a zero forcing set of $G \vee H$.

Now let $B_G$ be a minimum zero forcing set of $G$ and suppose without loss of generality that $Z(G)+|V(H)|=\min\{Z(G)+|V(H)|,Z(H)+|V(G)|\}$.    So $Z_c(G \vee H) \leq |B|=Z(G)+|V(H)|=\min\{Z(G)+|V(H)|,Z(H)+|V(G)|\}$.  Finally, we obtain equality since $Z_c(G \vee H) \geq Z(G \vee H)=\min\{Z(G)+|V(H)|,Z(H)+|V(G)|\}$, by Lemma \ref{l:join:z}.
\end{proof}

\begin{thm}\label{joinZTC}
Let $G$ and $H$ be graphs of order $n(G)$ and $n(H)$ respectively, with each either connected or isolate free.  If $G$ is ZF-dense and $n(H)-Z(H) \leq n(G)-Z(G)$, then $G \vee H$ is ZTCF-dense.
\end{thm}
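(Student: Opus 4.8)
The plan is to prove all three density properties at once by showing that through every vertex of $G \vee H$ there passes a single set which is simultaneously a minimum zero forcing set, a minimum total forcing set, and a minimum connected forcing set. The first step is to exploit the hypothesis $n(H) - Z(H) \le n(G) - Z(G)$, which I would rewrite as $Z(G) + |V(H)| \le Z(H) + |V(G)|$, so that
\[
\min\{\,Z(G)+|V(H)|,\ Z(H)+|V(G)|\,\} = Z(G) + |V(H)|.
\]
Plugging this into Lemma~\ref{l:join:z}, Lemma~\ref{l:join:t}, and the second part of Lemma~\ref{l:join:c} shows that $Z(G\vee H) = Z_t(G\vee H) = Z_c(G\vee H) = Z(G)+|V(H)|$; denote this common value by $t$.

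For the second step, fix an arbitrary $w \in V(G\vee H)$. If $w \in V(G)$, use the ZF-density of $G$ to choose a minimum zero forcing set $B_G$ of $G$ with $w \in B_G$; if instead $w \in V(H)$, let $B_G$ be any minimum zero forcing set of $G$. In either case put $B = B_G \cup V(H)$, so that $w \in B$ and $|B| = Z(G) + |V(H)| = t$, the union being disjoint since $V(G) \cap V(H) = \emptyset$. By the first part of Lemma~\ref{l:join:c}, $B$ is a zero forcing set of $G\vee H$, and the argument given there (every two vertices of $(G\vee H)[B]$ are adjacent or joined through $V(H)$ at distance two) shows $(G\vee H)[B]$ is connected; being connected on at least two vertices, it has no isolated vertex. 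Hence $B$ is at once a zero forcing set, a total forcing set, and a connected forcing set of $G\vee H$, each of the minimum size $t$. Since $w$ was arbitrary, every vertex of $G\vee H$ lies in a minimum zero forcing set, in a minimum total forcing set, and in a minimum connected forcing set, which is precisely the statement that $G\vee H$ is ZTCF-dense.

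Almost all of the work is carried by Lemmas~\ref{l:join:z}--\ref{l:join:c}, so the argument is short; the only step that needs a sentence of care is the claim that $B$ is a genuine total forcing set, which reduces to the observation that a connected induced subgraph on at least two vertices is isolate-free. Accordingly, the sole (minor) obstacle is the bookkeeping for degenerate small-order instances — in particular checking $|B| = Z(G) + |V(H)| \ge 2$, which holds as long as $G$ and $H$ are nontrivial, so that connectivity of $(G\vee H)[B]$ genuinely rules out isolated vertices.
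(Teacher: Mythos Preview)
Your proof is correct and follows essentially the same approach as the paper: construct $B=B_G\cup V(H)$ for a suitable minimum zero forcing set $B_G$ of $G$ and observe, via Lemma~\ref{l:join:c} and the hypothesis $n(H)-Z(H)\le n(G)-Z(G)$, that $B$ is a minimum connected forcing set containing the given vertex. The only difference is that the paper imports ZTF-density from~\cite{DaHePe2023a} and proves only CF-density explicitly, whereas you handle all three parameters at once by noting that Lemmas~\ref{l:join:z}, \ref{l:join:t}, and \ref{l:join:c} give $Z(G\vee H)=Z_t(G\vee H)=Z_c(G\vee H)$ and that a connected set on at least two vertices is automatically isolate-free; this makes your argument slightly more self-contained but is not a different idea. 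One small remark: your worry about $|B|\ge 2$ is unnecessary, since $B_G\ne\emptyset$ and $V(H)\ne\emptyset$ already force $|B|\ge 2$ without any nontriviality assumption on $G$ or $H$.
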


\begin{proof}
First note that it was shown in \cite{DaHePe2023a}, that under these conditions $G \vee H$ is ZTF-dense, so it remains to show that $G \vee H$ is also CF-dense.  Next note that since $n(H)-Z(H) \leq n(G)-Z(G)$, it follows from Lemma \ref{l:join:c} that $Z(G \vee H)=\min\{Z(G)+|V(H)|,Z(H)+|V(G)|\}=Z(G) + |V(H)|$.  So if $B_G$ is a minimum zero forcing set of $G$ then $B_G \cup V(H)$ is a minimum zero forcing set of $G \vee H$.

Now let $v \in V(G \vee H)$ be arbitrary.  If $v \in V(H)$, then for any minimum zero forcing set $B_G$ of $G$, $v \in B_G \cup V(H)$.  So suppose that $v \in V(G)$.  Since $G$ is ZF-dense, there exists a minimum zero forcing set $B_G$ of $G$ such that $v \in B_G$, and thus $v \in B_G \cup V(H)$.  Finally, since $v \in V(G \vee H)$ was arbitrary, it follows that every vertex of $G \vee H$ belongs to some minimum connected forcing set, and $G \vee H$ is CF-dense. 
\end{proof}

An example of a graph satisfying the conditions of Theorem \ref{joinZTC} is $C_n \vee H$, where $H$ is any graph on at most $n-1$ vertices.  The following result concerning the addition of dominating vertices is an immediate corollary of Theorem \ref{joinZTC}.

\begin{cor}
Let $G$ be a graph and $v \in V(G)$ be a dominating vertex of $G$.  If $G-v$ is ZF-dense and either connected or isolate free, then $G$ is ZTCF-dense.
\end{cor}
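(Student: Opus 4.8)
The plan is to recognize that a dominating vertex lets us write $G$ as a join and then invoke Theorem~\ref{joinZTC} directly. If $v$ is a dominating vertex of $G$, then $v$ is adjacent to every other vertex, so $G = (G-v) \vee K_1$, where $K_1$ is the single-vertex graph on $v$: the vertex set of the join is $V(G-v)\cup\{v\}$, the edges internal to $G-v$ are precisely the edges of $G$ not meeting $v$, and the join operation adds exactly the edges from $v$ to all of $V(G-v)$, all of which are present in $G$ by the dominating property.

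Next I would check that the hypotheses of Theorem~\ref{joinZTC} are met, taking the theorem's ``$G$'' to be $G-v$ and its ``$H$'' to be $K_1$. By assumption $G-v$ is ZF-dense and is either connected or isolate-free, and $K_1$ is connected. The remaining condition is $n(H)-Z(H)\le n(G-v)-Z(G-v)$. Since $Z(K_1)=1=n(K_1)$, the left-hand side is $0$, and since the zero forcing number of any graph is at most its order, the right-hand side is nonnegative; hence the inequality holds automatically. (Here one tacitly assumes $G$ is nontrivial, so that $G-v$ is a genuine graph; the case $G=K_1$ is degenerate.)

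With the hypotheses verified, Theorem~\ref{joinZTC} yields that $(G-v)\vee K_1 = G$ is ZTCF-dense, which completes the proof. There is essentially no obstacle here beyond spotting the join decomposition; the only point that warrants a sentence of care is the computation $n(K_1)-Z(K_1)=0$, which is what makes the numerical hypothesis of Theorem~\ref{joinZTC} vacuous in this setting.
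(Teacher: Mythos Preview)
Your proof is correct and is precisely the argument the paper has in mind: the paper states that the corollary ``is an immediate corollary of Theorem~\ref{joinZTC},'' and your decomposition $G=(G-v)\vee K_1$ together with the verification $n(K_1)-Z(K_1)=0\le n(G-v)-Z(G-v)$ is exactly how that immediacy is realized.
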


\subsection{Coronas}

The \emph{corona} of $G$ with $H$, denoted $G\circ H$, is obtained by taking one copy of $G$ and $|V(G)|$ copies of $H$ and joining the $i$th vertex of $G$ to every vertex in the $i$th copy of $H$.  The following result first appeared in \cite{ZGcircKs} (with a small correction to the result made in \cite{cartfrac}).  

\begin{prop}[\cite{cartfrac, ZGcircKs}]\label{coronaval}
If $G$ and $H$ be graphs of order $q$ and $r \geq 2$ and $H$ is isolate-free, then $Z(G \circ H)=qZ(H)+Z(G)$.
\end{prop}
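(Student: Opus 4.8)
The plan is to establish the two inequalities $Z(G\circ H)\le qZ(H)+Z(G)$ and $Z(G\circ H)\ge qZ(H)+Z(G)$ separately, where $q=|V(G)|$ and $r=|V(H)|$. Denote by $v_1,\dots,v_q$ the vertices of the distinguished copy of $G$ and by $H_1,\dots,H_q$ the corresponding copies of $H$, so that $v_i$ is adjacent to every vertex of $H_i$ and is the unique neighbor of $V(H_i)$ lying outside $H_i$. The key structural remark is that $(G\circ H)[V(H_i)\cup\{v_i\}]\cong K_1\vee H$; since $H$ is isolate-free with $r\ge 2$ we have $Z(H)\le r-1$, so Lemma~\ref{l:join:z} yields $Z(K_1\vee H)=\min\{1+r,\,Z(H)+1\}=Z(H)+1$.

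For the upper bound, fix a minimum zero forcing set $B_G$ of $G$ and, inside each copy $H_i$, a minimum zero forcing set $B_i$ of $H$; put $B=B_G\cup\bigcup_{i=1}^q B_i$, so $|B|=Z(G)+qZ(H)$. I would show that $B$ is a zero forcing set of $G\circ H$ by interleaving two processes: run the forcing of $G$ from $B_G$, and every time a vertex $v_i$ turns blue, pause and finish coloring $H_i$. The latter is legal because once $v_i$ is blue it is a blue vertex dominating $H_i$, so the forcing sequence of $H$ from $B_i$ transfers verbatim to $H_i$ (each $H_i$-vertex's only extra neighbor, $v_i$, is blue and never blocks a force). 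The $G$-steps are legal because whenever $v_k$ is to force in $G$ the copy $H_k$ has already been completed, so the unique white neighbor of $v_k$ in $G\circ H$ coincides with its unique white neighbor in $G$. An easy induction on the $G$-forcing sequence finishes the upper bound.

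For the lower bound, let $B$ be a minimum zero forcing set of $G\circ H$, write $B_i=B\cap V(H_i)$ and $B_0=B\cap V(G)$, and fix a chronological forcing process. The crux is an attached-subgraph observation: because every neighbor of a vertex of $H_i$ lies in $V(H_i)\cup\{v_i\}$, restricting the process to $\Gamma_i:=(G\circ H)[V(H_i)\cup\{v_i\}]\cong K_1\vee H$ gives a legal forcing process on $\Gamma_i$ from $B_i$, except that $v_i$ may have been colored from outside $\Gamma_i$ (by a $G$-neighbor) or belong to $B$; in those cases we treat $v_i$ as initially blue. Hence $B_i\cup\{v_i\}$ is always a zero forcing set of $\Gamma_i$, and if $v_i$ was instead forced by a vertex of $H_i$ then $B_i$ by itself is a zero forcing set of $\Gamma_i$. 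Setting $\epsilon_i=1$ when $v_i$ is forced by a vertex of $H_i$ and $\epsilon_i=0$ otherwise, and using $Z(\Gamma_i)=Z(H)+1$, we obtain $|B_i|\ge Z(H)+\epsilon_i$. Symmetrically, restricting the process to the copy of $G$ shows each $v_j\notin B_0$ is colored either by a $G$-neighbor (a legal $G$-force) or by a vertex of $H_j$, so $B_0\cup\{v_j:\epsilon_j=1\}$ is a zero forcing set of $G$ and $|B_0|\ge Z(G)-\sum_i\epsilon_i$. Adding, $|B|=|B_0|+\sum_{i=1}^q|B_i|\ge\big(Z(G)-\sum_i\epsilon_i\big)+\sum_{i=1}^q\big(Z(H)+\epsilon_i\big)=qZ(H)+Z(G)$.

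The main obstacle is making the attached-subgraph observation fully rigorous: one must verify that restricting the chronological process of $G\circ H$ to $\Gamma_i$ (and to the copy of $G$) yields a \emph{legal} process — a vertex has no more white neighbors in the restricted graph than in $G\circ H$, and the restricted process has colored at least as many vertices by the corresponding step, so each force remains valid — and one must carefully separate the two regimes ($v_i$ supplied as an initial vertex versus $v_i$ forced from within $H_i$) so that the $\epsilon_i$ contributions cancel exactly in the final sum. The upper bound, once the interleaved forcing order is set up, is routine.
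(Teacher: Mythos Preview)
The paper does not prove Proposition~\ref{coronaval}; it is quoted from \cite{ZGcircKs,cartfrac} as a known result, so there is no ``paper's own proof'' to compare against. Your argument is therefore being judged on its own merits, and it is sound. Your upper-bound construction is exactly the content of the paper's Lemma~\ref{lem:coronasets} (which builds $B_G\cup\bigcup_i B_H^i$ and then \emph{invokes} Proposition~\ref{coronaval} for minimality rather than proving it); the interleaved forcing order you describe is the right way to justify that this set forces. One small wording issue: ``every time a vertex $v_i$ turns blue'' should be understood to include the vertices $v_i\in B_G$ at time zero, since otherwise the first $G$-force would be blocked by white vertices in $H_k$---you clearly intend this, but it would be cleaner to say so explicitly.

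Your lower bound via the indicators $\epsilon_i$ is correct and is the substantive part. The restriction to $\Gamma_i\cong K_1\vee H$ works because every vertex of $H_i$ has all of its neighbors inside $\Gamma_i$, so forces internal to $H_i$ transfer verbatim, while $v_i$ can only lose white neighbors upon restriction; your appeal to Lemma~\ref{l:join:z} for $Z(\Gamma_i)=Z(H)+1$ is valid since $H$ isolate-free with $r\ge 2$ forces $Z(H)\le r-1$. The restriction to $G$ is likewise legitimate: treating the $\epsilon_j=1$ vertices as initially blue only enlarges the blue set at every stage, so any force $v_k\to v_j$ that was legal in $G\circ H$ remains legal (or redundant) in $G$. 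The cancellation $\sum_i\epsilon_i$ then gives exactly $qZ(H)+Z(G)$. This is essentially the argument one finds in the cited sources.
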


We now extend this result in a way that will be helpful for discussing forcing density.

\begin{lem}\label{lem:coronasets}
Let $G$ and $H$ be graphs such that $H$ has no isolated vertices.  If $B_G$ is a zero forcing set of $G$, $B_H$ is a zero forcing set of $H$, and $B_H^i$ is the copy of $B_H$ in the $i$th copy of $H$, then $B=B_G \cup (\bigcup_{i=1}^q B_H^i)$ is a zero forcing set of $G \circ H$.  In addition, if $B_G$ and $B_H$ are both minimum, then $B$ is minimum. 
\end{lem}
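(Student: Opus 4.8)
The plan is to exhibit an explicit chronological forcing sequence for $B$ on $G \circ H$ obtained by interleaving the forcing process of $B_G$ on $G$ with the forcing processes of $B_H$ on the $q$ copies of $H$. Write $v_1,\dots,v_q$ for the vertices of $G$ and $H_i$ for the copy of $H$ joined to $v_i$, so that $B_H^i \subseteq V(H_i)$. The structural fact I would lean on is that in $G \circ H$ every vertex of $H_i$ has its entire neighborhood inside $V(H_i) \cup \{v_i\}$; hence the subgraph induced on $V(H_i) \cup \{v_i\}$ is the join $H \vee K_1$, and a force performed by a vertex of $H_i$ is legal in $G \circ H$ exactly when it is legal in that join.

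The first step is the auxiliary claim: \emph{once $v_i$ has become blue, $B_H^i \cup \{v_i\}$ forces all of $V(H_i)$, with only vertices of $H_i$ acting as forcers.} This follows from the first part of Lemma~\ref{l:join:c} applied with $H_i$ and $K_1 = \{v_i\}$ playing the roles of $G$ and $H$ there; in that lemma's proof the appended vertex is never used as a forcer, so the sequence of moves transfers verbatim to $G \circ H$, and because the interior vertices of $H_i$ have no neighbors outside $V(H_i)\cup\{v_i\}$ these moves neither depend on nor affect the rest of $G \circ H$.

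The second step is to run the forcing process of $B_G$ on $G$ and mirror it in $G \circ H$, maintaining the invariant that whenever $v_j$ is blue, all of $V(H_j)$ is blue. At time $0$ the invariant holds for each $v_j \in B_G$ by the claim. It is preserved at a force $v_j \to v_k$ of the $G$-process: there $v_k$ is the unique white $G$-neighbor of $v_j$, and by the invariant $V(H_j)$ is already blue, so $v_k$ is also the unique white neighbor of $v_j$ in $G \circ H$ and the force is legal; we then pause and use the claim to color all of $V(H_k)$ before resuming. Since $B_G$ is a zero forcing set of $G$, the process terminates with all of $V(G)$ blue, hence with every $V(H_i)$ blue, so $B$ is a zero forcing set of $G \circ H$. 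When $B_G$ and $B_H$ are both minimum, $|B| = |B_G| + q|B_H| = Z(G) + qZ(H) = Z(G \circ H)$ by Proposition~\ref{coronaval}, so $B$ is then minimum.

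The main obstacle is exactly what forces the interleaving: a copy $H_i$ cannot be completed while $v_i$ is white, since then every interior vertex of $H_i$ carries $v_i$ as an extra white neighbor; yet $v_i$ can only be forced along a chain inside $G$, which in turn requires the copies attached to earlier vertices of that chain to be finished first. Thus a ``do all the copies, then do $G$'' argument breaks down, and the real work is in checking that each mirrored $G$-move stays legal — which is precisely the content of the invariant above.
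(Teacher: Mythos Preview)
Your proof is correct and follows the same alternating strategy as the paper: interleave forces in $G$ (legal once the attached copy $H_j$ is fully blue) with forces inside each $H_i$ (legal once $v_i$ is blue), then invoke Proposition~\ref{coronaval} for minimality. The paper's version is a one-sentence sketch of this alternation, whereas you spell out the invariant carefully and route the ``$B_H^i\cup\{v_i\}$ colors $H_i$'' step through Lemma~\ref{l:join:c}; that detour is sound but unnecessary, since once $v_i$ is blue the forcing sequence of $B_H$ on $H$ transfers verbatim to $H_i$ because every vertex of $H_i$ has $v_i$ as its only neighbor outside $H_i$.
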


\begin{proof}
First construct $B$ as above.  Allowing the forcing process in $G \circ H$ to begin at $B$, forces will alternate between those in $G$ $u_i \rightarrow u_j$ when every vertex in the $i$th copy of $H$ is blue and those in copies of $H$ when the vertex $u_i$ is blue, and thus $B$ is a zero forcing set of $G \circ H$.  If in addition, $B_G$ and $B_H$ are minimum, then $|B|=qZ(H)+Z(G)=Z(G \circ H)$.  So by Proposition \ref{coronaval}, $B$ is a minimum zero forcing set of $G \circ H$.   
\end{proof}

\begin{lem}\label{lem:coronaforce}
Let $G$ and $H$ be graphs such that $H$ has no isolated vertices.    Enumerate the vertices of $G$, $\{u_i\}_{i=1}^q$, and for each $i \in [q]$, let $H_i$ be the $i$th copy of $H$.  If $B'$ is any zero forcing set of $G \circ H$, then it can be assumed that for each $i \in [q]$, $u_i$ forces no vertices in $H_i$ during the forcing process beginning at $B'$. 
\end{lem}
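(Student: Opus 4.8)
The plan is to start from an arbitrary chronological list of forces witnessing that $B'$ is a zero forcing set of $G \circ H$, and then repair it, one force at a time, until no vertex $u_i$ forces into its own pendant copy $H_i$. The structural fact driving the repair is that in $G \circ H$ the neighborhood of $u_i$ is exactly $N_G(u_i) \cup V(H_i)$. Hence if the list contains a force $u_i \to w$ with $w \in V(H_i)$, performed at time step $t$, then $w$ is the unique white neighbor of $u_i$ at the start of step $t$; in particular every vertex of $V(H_i) \setminus \{w\}$ is already blue at that moment.

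Next I would produce a replacement forcer inside $H_i$. Since $H$, and therefore $H_i$, has no isolated vertices, $w$ has a neighbor $w' \in V(H_i)$, and by the previous paragraph $w'$ is blue at the start of step $t$. The neighborhood of $w'$ in $G \circ H$ is $N_{H_i}(w') \cup \{u_i\}$, and at the start of step $t$ the vertex $u_i$ is blue while every vertex of $N_{H_i}(w') \subseteq V(H_i)$ is blue unless it equals $w$; since $w \in N_{H_i}(w')$, the vertex $w$ is the unique white neighbor of $w'$ at step $t$. Deleting the force $u_i \to w$ and inserting $w' \to w$ at step $t$ therefore yields another valid chronological list of forces: the set of blue vertices after each time step is unchanged (the vertex $w$ is still forced at step $t$), so the process still terminates with every vertex blue, and moreover $w'$ was not a forcer in the original list, because its only candidate white neighbor at step $t$ was $w$, which was forced by $u_i$. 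This substitution removes one force in which a vertex of $G$ forces into its own copy of $H$ and introduces none, since the colorings at every step --- and hence which forces are legal --- are untouched.

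Finally I would iterate. Because each vertex forces at most once in a zero forcing process, and the only neighbors of $u_i$ lying in any copy of $H$ are those in $H_i$, the original list contains at most one offending force per index $i \in [q]$; applying the substitution above to each of them (the substitutions do not interfere, as their validity depends only on the fixed colorings) produces a chronological forcing process for $B'$ in which no $u_i$ forces a vertex of $H_i$, which is the claim. I do not expect a genuine obstacle here: the only point needing care is the bookkeeping around simultaneous forces within a time step, and that is resolved by the observation that the new forcer $w'$ was idle in the original list. Note that this argument uses only the isolate-freeness of $H$ and the explicit adjacency structure of the corona; it does not rely on Proposition~\ref{coronaval} or Lemma~\ref{lem:coronasets}.
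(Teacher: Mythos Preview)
Your proposal is correct and follows essentially the same approach as the paper: observe that if $u_i$ is about to force some $w\in V(H_i)$, then every vertex of $H_i\setminus\{w\}$ is already blue (since $u_i$ dominates $H_i$), and isolate-freeness of $H$ supplies a neighbor $w'\in V(H_i)$ that can force $w$ instead. The paper states this in two sentences without the explicit chronological-list bookkeeping you provide, but the argument is the same.
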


\begin{proof}
Let $B'$ be a zero forcing set of $G \circ H$, and for each $i \in [q]$, let $H_i'=(G \circ H)[V(H_i) \cup \{u_i\}]$.  Since each $u_i$ is a dominating vertex of $H_i'$, $u_i$ will have more than one white neighbor until only one vertex $w$ of $H_i$ is white.  Since $H$ has no isolated vertices, $w$ has a neighbor $v$ in $H_i$; and at this point in the forcing process, since every vertex in $H_i'$ is blue except for $w$, $v$ can force $w$.  So it can be assumed that during the forcing process beginning at $B'$, $u_i$ does not force any vertices in $H_i$.  
\end{proof}

\begin{lem}\label{totalcoronaval}
Let $G$ be a graph and $H$ be a graph with no isolated vertices.
\begin{itemize}
\item If $Z_t(H)=Z(H)$, then $Z_t(G \circ H)=|V(G)|\cdot Z(H)+Z(G)$.
\item If $Z_t(H)>Z(H)$, then $Z_t(G \circ H)=|V(G)|\cdot Z(H)+|V(G)|$.
\end{itemize}
\end{lem}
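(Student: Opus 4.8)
The plan is to establish each formula by proving matching upper and lower bounds. The upper bounds will come from explicit constructions based on Lemma~\ref{lem:coronasets}; the lower bound in the first case is immediate from Proposition~\ref{coronaval}, while the lower bound in the second case requires a per-copy counting argument. Throughout, write $q=|V(G)|$ and enumerate $V(G)=\{u_1,\dots,u_q\}$ with $H_i$ the $i$th copy of $H$, as in Lemma~\ref{lem:coronaforce}.

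\emph{First case ($Z_t(H)=Z(H)$).} For the upper bound I would take a minimum zero forcing set $B_G$ of $G$ and a minimum total forcing set $B_H$ of $H$, so $|B_H|=Z_t(H)=Z(H)$, and form $B=B_G\cup\bigcup_{i=1}^{q}B_H^i$, which is a zero forcing set of $G\circ H$ by Lemma~\ref{lem:coronasets}. To see $B$ is total: each $u_i\in B_G$ is adjacent to the nonempty set $B_H^i$, and each vertex of $B_H^i$ is adjacent either to $u_i$ (when $u_i\in B_G$) or to one of its neighbors inside $B_H^i$ (since $H[B_H]$ has no isolated vertices). Hence $Z_t(G\circ H)\le|B|=q\,Z(H)+Z(G)$, and the reverse inequality is $Z_t(G\circ H)\ge Z(G\circ H)=q\,Z(H)+Z(G)$ by Proposition~\ref{coronaval}.

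\emph{Second case ($Z_t(H)>Z(H)$).} For the upper bound I would take $B_G=V(G)$ (a zero forcing set of $G$ trivially) and a minimum zero forcing set $B_H$ of $H$, and form $B=V(G)\cup\bigcup_{i=1}^{q}B_H^i$; this is a zero forcing set of $G\circ H$ by Lemma~\ref{lem:coronasets}, and it is total because each $u_i$ is adjacent to the nonempty set $B_H^i$ and each vertex of $B_H^i$ is adjacent to $u_i\in V(G)$; thus $Z_t(G\circ H)\le|B|=q\,Z(H)+q$. For the lower bound, let $B'$ be a minimum total forcing set of $G\circ H$; by Lemma~\ref{lem:coronaforce} we may assume $u_i$ forces no vertex of $H_i$. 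The first step is to show $A_i:=B'\cap V(H_i)$ is a zero forcing set of $H_i$ for every $i$: since $u_i$ is the only vertex outside $H_i$ adjacent to $V(H_i)$ and performs no force into $H_i$, every vertex of $V(H_i)\setminus B'$ is forced from within $H_i$, and a force $v\to w$ inside $H_i$ during the process on $G\circ H$ is also legal in $H_i$ (the $G\circ H$-neighbors of $v$ are exactly $N_{H_i}(v)\cup\{u_i\}$, so if $w$ is $v$'s unique white $G\circ H$-neighbor it is also $v$'s unique white $H_i$-neighbor); replaying these forces in the same order on $H_i$ from $A_i$ colors all of $V(H_i)$. The second step splits on whether $u_i\in B'$. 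If $u_i\in B'$, then $|B'\cap(V(H_i)\cup\{u_i\})|=|A_i|+1\ge Z(H)+1$. If $u_i\notin B'$, then a vertex of $A_i$ has no $B'$-neighbor outside $A_i$, so $H_i[A_i]$ has no isolated vertices; hence $A_i$ is a total forcing set of $H_i\cong H$ and $|A_i|\ge Z_t(H)\ge Z(H)+1$. In both cases $|B'\cap(V(H_i)\cup\{u_i\})|\ge Z(H)+1$, and since the sets $V(H_i)\cup\{u_i\}$ for $i\in[q]$ partition $V(G\circ H)$, summing over $i$ gives $|B'|\ge q\,Z(H)+q$.

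I expect the main obstacle to be the lower bound in the second case, and in particular making the ``replay'' argument fully rigorous: one must check that at each replayed step the forcing vertex is already blue in $H_i$ and the forced vertex is still white, which uses Lemma~\ref{lem:coronaforce} to exclude $u_i$ as a forcer together with the monotonicity of the forcing process. The remaining pieces — verifying that the explicit sets are total forcing sets, and the partitioning/counting bookkeeping for the lower bound — should be routine.
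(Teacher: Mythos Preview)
Your proof is correct and follows essentially the same approach as the paper: identical constructions for both upper bounds, Proposition~\ref{coronaval} for the first lower bound, and a per-copy count showing $|B'\cap(V(H_i)\cup\{u_i\})|\ge Z(H)+1$ for the second. Your organization of that last step---first establishing directly that $A_i$ is a zero forcing set of $H_i$ and then splitting on whether $u_i\in B'$---is a bit cleaner than the paper's contradiction argument, but the underlying idea is the same.
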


\begin{proof}
Begin by enumerating the vertices of $G$, $\{u_i\}_{i=1}^q$.  Next, for each $i \in [q]$, let $H_i'=(G \circ H)[V(H_i) \cup \{u_i\}]$.

First suppose that $Z_t(H)=Z(H)$ and note that by Proposition \ref{coronaval}, $Z_t(G \circ H) \geq Z(G \circ H)=|V(G)| \cdot Z(H)+Z(G)$.  If $B_G$ is a minimum total forcing set of $G$, $B_H$ is a minimum zero forcing set of $H$, and $B_H^i$ is the copy of $B_H$ in the $i$th copy of $H$, then by Lemma \ref{lem:coronasets} and since total forcing sets are zero forcing sets, $B=B_G \cup (\bigcup_{i=1}^q B_H^i)$ is a zero forcing set of $G \circ H$.  Furthermore, for each $i \in [q]$, since $B_H^i$ is a total forcing set, each vertex in $B_H^i$ has a neighbor in $B_H^i$, and since each vertex $u_i$ is a dominating vertex of $H_i'$, each $u_i \in B_G$ has neighbors in $B_H^i$.  Thus $B$ is a total forcing set of $G \circ H$ and since $Z_t(H)=Z(H)$, $Z_t(G \circ H) \leq |V(G)| \cdot Z_t(H)+Z(G)=|V(G)| \cdot Z(H)+Z(G)$.

Next suppose that $Z_t(H)>Z(H)$ and let $B=V(G) \cup \left(\bigcup_{i=1}^{q}B_H^i\right)$ such that $B_H$ is a zero forcing set of $H$ and for each $i \in [q]$, $B_H^i$ is the copy of $B_H$ in $H$.  Clearly, $V(G)$ is a zero forcing set of $G$, so by Lemma \ref{lem:coronasets}, $B$ is a zero forcing set of $G \circ H$.  Furthermore, since each $u_i$ is a dominating vertex of $H_i'$, each vertex in $V(G)$ has a neighbor in $B$ and each vertex in $B_H^i$ has a neighbor in $B$.  Thus $B$ is a total forcing set of $G \circ H$, and $Z_t(G \circ H) \leq |V(G)| \cdot Z(H)+|V(G)|$.  

Finally, let $B$ be an arbitrary minimum total forcing set of $G \circ H$.  Since each vertex $u_i$ is a dominating vertex of $H_i'$ and $H$ has no isolated vertices, $Z(H_i')=Z(H)+1$.  Furthermore, since $u_i$ is the only vertex in $H_i'$ with neighbors outside of $H_i'$, at most one vertex in $H_i'$ can be forced by a vertex outside of $H_i'$, and so $|B \cap V(H_i')| \geq Z(H_i')-1=Z(H)$.  Now suppose, by way of contradiction, that $|B \cap V(H_i')|=Z(H)$.  First consider the case where $u_i \in B$.  Since $u_i \in B$, $|B \cap V(H_i)| < Z(H_i)$ cannot be a zero forcing set of $H_i$.  In turn, by Lemma \ref{lem:coronaforce} it can be assumed that $u_i$ does not force any vertices in $H_i'$, so $B$ cannot be a zero forcing set of $G \circ H$.  Now consider the case where $u_i \not \in B$, again by Lemma \ref{lem:coronaforce}, it can be assumed that $u_i$ forces no vertices in $H_i'$, so it must be the case that $B \cap V(H_i)$ is a minimum zero forcing set of $H_i$.  However, $Z_t(H_i)>Z(H_i)$, so there must exist vertices in $B$ which do not have neighbors in $B$ and $B$ cannot be a total forcing set of $G \circ H$.  Thus, for each $i \in [q]$, $B \cap V(H_i') \geq Z(H)+1$.  So $Z_t(G \circ H)=|B|\geq |V(G)|\cdot Z(H)+|V(G)|$.
\end{proof}

\begin{lem}\label{connectcoronaval}
If $G$ is a connected graph and $H$ is an isolate-free graph, then 
\[Z_c(G \circ H)=|V(G)| \cdot Z(H) + |V(G)|.\] 
\end{lem}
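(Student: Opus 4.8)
The plan is to prove the two inequalities $Z_c(G\circ H)\le |V(G)|\cdot Z(H)+|V(G)|$ and $Z_c(G\circ H)\ge |V(G)|\cdot Z(H)+|V(G)|$ separately. Throughout I write $q=|V(G)|$, enumerate $V(G)=\{u_1,\dots,u_q\}$, let $H_i$ be the $i$th copy of $H$, and set $H_i'=(G\circ H)[V(H_i)\cup\{u_i\}]$. The sets $V(H_1'),\dots,V(H_q')$ partition $V(G\circ H)$, and since $u_i$ is a dominating vertex added to the isolate-free graph $H$, we have $Z(H_i')=Z(H)+1$ (this is exactly the observation already used in the proof of Lemma~\ref{totalcoronaval}).

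For the upper bound, I would let $B_H$ be a minimum zero forcing set of $H$ and put $B=V(G)\cup\bigcup_{i=1}^q B_H^i$, where $B_H^i$ is the copy of $B_H$ in $H_i$. Since $V(G)$ is a zero forcing set of $G$, Lemma~\ref{lem:coronasets} gives that $B$ is a zero forcing set of $G\circ H$. It is connected as well: the copy of $G$ in $G\circ H$ is connected, and every vertex of $B_H^i$ is adjacent to $u_i\in V(G)$, so $(G\circ H)[B]$ is connected. Hence $B$ is a connected forcing set and $Z_c(G\circ H)\le |B|=q+qZ(H)$.

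For the lower bound, let $B$ be any minimum connected forcing set of $G\circ H$; I would show $|B\cap V(H_i')|\ge Z(H)+1$ for every $i$, which upon summing over $i$ yields $|B|\ge q(Z(H)+1)$. First, since $u_i$ is the only vertex of $H_i'$ with a neighbor outside $H_i'$, at most one vertex of $H_i'$ can be forced from outside during the forcing process from $B$, so $(B\cap V(H_i'))\cup\{u_i\}$ is a zero forcing set of $H_i'$ and hence $|B\cap V(H_i')|\ge Z(H_i')-1=Z(H)$. Suppose for contradiction that $|B\cap V(H_i')|=Z(H)$ for some $i$. If $u_i\in B$, then no external force can ever enter $H_i'$ (its only possible target, $u_i$, is already blue), so $B\cap V(H_i')$ must itself be a zero forcing set of $H_i'$, contradicting $|B\cap V(H_i')|=Z(H)<Z(H_i')$. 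If $u_i\notin B$, then $B\cap V(H_i)=B\cap V(H_i')$ has size $Z(H)\ge 1$ and so is nonempty; but every edge of $G\circ H$ leaving $V(H_i)$ passes through $u_i\notin B$, so $(G\circ H)[B]$ has no edge between $B\cap V(H_i)$ and $B\setminus V(H_i)$, and connectivity of $(G\circ H)[B]$ forces $B\subseteq V(H_i)$. Then $|B|=Z(H)$, contradicting $|B|=Z_c(G\circ H)\ge Z(G\circ H)=qZ(H)+Z(G)>Z(H)$ by Proposition~\ref{coronaval}. Combining the two bounds gives $Z_c(G\circ H)=qZ(H)+q$.

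The main obstacle is the lower bound, and specifically the case $u_i\notin B$: it cannot be excluded by any forcing argument alone, since $(B\cap V(H_i'))\cup\{u_i\}$ may well be a minimum zero forcing set of $H_i'$ of size exactly $Z(H)$, so it is precisely the connectivity hypothesis on $B$ that rules it out. One must also be mildly careful that the restriction to $H_i'$ of the global forcing process is a legitimate forcing process within $H_i'$; this holds because a vertex's set of white neighbors inside $H_i'$ is contained in its set of white neighbors in $G\circ H$, so a force valid in $G\circ H$ between two vertices of $H_i'$ is valid in $H_i'$. Everything else reduces to the corona lemmas already established.
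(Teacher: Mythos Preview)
Your proof is correct. The upper bound is identical to the paper's. For the lower bound the paper takes a slightly more direct route: it invokes Lemma~\ref{lem:coronaforce} to conclude that $B\cap V(H_i)$ is already a zero forcing set of $H_i$ (so $|B\cap V(H_i)|\ge Z(H)\ge 1$ for every $i$), and then connectivity of $(G\circ H)[B]$ immediately forces $V(G)\subseteq B$, yielding $|B|\ge qZ(H)+q$ with no case split. You instead bound $|B\cap V(H_i')|$ via $Z(H_i')=Z(H)+1$ and a case analysis on whether $u_i\in B$, using Proposition~\ref{coronaval} to close the $u_i\notin B$ case. Both arguments hinge on the same connectivity observation; yours trades the dependence on Lemma~\ref{lem:coronaforce} for a short case analysis and a cardinality contradiction, while the paper's is more streamlined but relies on that structural lemma.
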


\begin{proof}
Begin by enumerating the vertices of $G$, $\{u_i\}_{i=1}^q$.  Next, for each $i \in [q]$, let $H_i'=(G \circ H)[V(H_i) \cup \{u_i\}]$.

First let $B=V(G) \cup \left(\bigcup_{i=1}^{q}B_H^i\right)$, where $B_H$ is a zero forcing set of $H$, and for each $i \in [q]$, $B_H^i$ is the copy of $B_H$ in $H_i$.  Since $V(G)$ is a zero forcing set of $G$ and $B_H$ is a zero forcing set of $H$, by Lemma \ref{lem:coronasets}, $B$ is a zero forcing set of $G \circ H$.  Furthermore, since every vertex in $B_H^i$ is adjacent to the corresponding $u_i$ and $G$ is a connected graph, it follows that $(G \circ H)[B]$ is connected, and as a result $B$ is a connected forcing set of $G \circ H$.  Thus, $Z_c(G \circ H) \leq |B|=|V(G)| \cdot Z(H)+|V(G)|$.  

Now let $B$ be a minimum connected forcing set of $G \circ H$.  By Lemma \ref{lem:coronaforce}, it can be assumed that $u_i$ forces no vertices in $H_i$, and thus since $u_i$ is the only vertex of $H_i'$ with neighbors outside of $H_i'$, $B \cap V(H_i)$ must be a zero forcing set of $H_i$.  Furthermore, since this means that each $H_i$ contains vertices in $B$, but no vertex in $H_i$ is adjacent a vertex in $H_j$ for $i \neq j$, for $(G \circ H)[B]$ to be connected, every vertex in $V(G)$ must be contained in $B$.  Thus $Z_c(G \circ H) \geq |V(G)|\cdot Z(H)+|V(G)|$.
\end{proof}

Using the lemmas above, we now derive sufficient conditions for the corona to preserve ZF-density, TF-density, and CF-density. 

\begin{thm}
\label{thmcortfzf}
Let $G$ and $H$ be graphs, with $H$ isolate-free, and suppose $G$ is ZF-dense.
\begin{itemize}
\item If $H$ is ZF-dense, then $G \circ H$ is ZF-dense.
\item If $H$ is TF-dense and $Z_t(H)=Z(H)$, then $G \circ H$ is TF-dense.
\item If $H$ is ZF-dense and $Z_t(H)>Z(H)$, then $G \circ H$ is TF-dense.
\end{itemize}
\end{thm}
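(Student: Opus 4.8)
The plan is to prove all three bullets by a common template: fix an arbitrary vertex $v$ of $G\circ H$, split into the case $v\in V(G)$ and the case $v$ lies in one of the copies of $H$, and in each case produce a minimum forcing set of the required type that contains $v$. All the machinery is already available. Write $q=|V(G)|$ and enumerate $V(G)=\{u_1,\dots,u_q\}$, with $H_i$ the $i$th copy of $H$. Lemma~\ref{lem:coronasets} says that if $B_G$ is a zero forcing set of $G$ and each $B_H^i$ is a zero forcing set of $H_i$, then $B=B_G\cup\bigcup_{i=1}^q B_H^i$ is a zero forcing set of $G\circ H$; Proposition~\ref{coronaval} gives $Z(G\circ H)=qZ(H)+Z(G)$; and Lemma~\ref{totalcoronaval} gives the two relevant values of $Z_t(G\circ H)$. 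Throughout I will use that $H$ isolate-free forces $Z(H)\ge 1$, so every $B_H^i$ is nonempty.

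For the first bullet I would take $B=B_G\cup\bigcup_{i=1}^q B_H^i$ with every piece chosen to be a minimum zero forcing set; then $|B|=Z(G)+qZ(H)=Z(G\circ H)$, so $B$ is a minimum zero forcing set of $G\circ H$. If $v\in V(G)$, use ZF-density of $G$ to pick $B_G$ with $v\in B_G$, and let the $B_H^i$ be arbitrary minimum zero forcing sets; if $v$ lies in $H_i$, use ZF-density of $H$ to pick $B_H^i$ with $v\in B_H^i$, and let $B_G$ and the remaining $B_H^j$ be arbitrary. Either way $v\in B$, so $G\circ H$ is ZF-dense.

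For the second and third bullets the same set $B$ must also be checked to have no isolated vertices, and here is the one point that is not purely mechanical: each $u_i$ is a dominating vertex of the subgraph induced by $V(H_i)\cup\{u_i\}$, so each $u_i\in B\cap V(G)$ automatically has a neighbor in the nonempty set $B_H^i$, and every vertex of $B_H^i\subseteq V(H_i)$ is adjacent to $u_i$. For the second bullet, where $Z_t(H)=Z(H)$, choose each $B_H^i$ to be a minimum \emph{total} forcing set of $H_i$ (of size $Z(H)$) and $B_G$ a minimum zero forcing set of $G$; then $B$ has no isolated vertices by the observation above (and since each $B_H^i$ is itself isolate-free as a total forcing set), and $|B|=Z(G)+qZ(H)=Z_t(G\circ H)$ by Lemma~\ref{totalcoronaval}, so $B$ is a minimum total forcing set; placing $v$ into $B$ uses TF-density of $G$ when $v\in V(G)$ and TF-density of $H$ when $v$ lies in some $H_i$. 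For the third bullet, where $Z_t(H)>Z(H)$, take $B=V(G)\cup\bigcup_{i=1}^q B_H^i$ with each $B_H^i$ a minimum zero forcing set of $H_i$; the observation above shows $B$ has no isolated vertices, and $|B|=q+qZ(H)=Z_t(G\circ H)$ by Lemma~\ref{totalcoronaval}, so $B$ is a minimum total forcing set. Since $V(G)\subseteq B$, any $v\in V(G)$ already lies in $B$, and if $v$ lies in $H_i$ we instead choose $B_H^i$ containing $v$ using ZF-density of $H$. In all cases $v\in B$, so $G\circ H$ is TF-dense.

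I do not anticipate a genuine obstacle: the argument is a disciplined case split built on the lemmas above. The only subtlety worth flagging is in the third bullet, where the global presence of $V(G)$ inside the forcing set is precisely what allows the use of mere zero forcing sets (rather than total forcing sets) inside the copies of $H$ while still avoiding isolated vertices; getting the cardinality to land on the value from Lemma~\ref{totalcoronaval} then requires noting that a minimum total forcing set of $H$ has size exactly $Z(H)$ when $Z_t(H)=Z(H)$.
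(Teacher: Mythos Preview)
Your proposal is correct and follows essentially the same approach as the paper: building the forcing set of $G\circ H$ from a suitable $B_G$ together with copies $B_H^i$, using Lemma~\ref{lem:coronasets}, Proposition~\ref{coronaval}, and Lemma~\ref{totalcoronaval} to certify minimality, and invoking the appropriate density hypothesis to place the arbitrary vertex $v$ into one of the pieces.

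One slip to fix: in the second bullet you write that ``placing $v$ into $B$ uses TF-density of $G$ when $v\in V(G)$,'' but the hypothesis only gives you that $G$ is ZF-dense. This is harmless because you already (correctly) declared $B_G$ to be a minimum \emph{zero} forcing set of $G$, so what you actually need---and what the paper uses---is ZF-density of $G$ to choose such a $B_G$ containing $v$. Just replace ``TF-density of $G$'' with ``ZF-density of $G$'' there.
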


\begin{proof}
Begin by enumerating the vertices of $G$, $\{u_i\}_{i=1}^q$.  Next, for each $i \in [q]$, let $H_i'=(G \circ H)[V(H_i) \cup \{u_i\}]$.  Now let $v \in V(G \circ H)$ be arbitrary.

First suppose that $H$ is ZF-dense.  If $v \in V(G)$, then since $G$ is ZF-dense, there exists a minimum zero forcing set $B_G$ of $G$ with $v \in B_G$.  Likewise, if $v \in V(H_i)$ for some $i$, then since $H$ is ZF-dense there exists $B_H$ a minimum zero forcing set of $H$ with $v \in B_H^i$, where for each $i \in [q]$, $B_H^i$ is the copy of $B_H$ in $H_i$.  By Lemma \ref{lem:coronasets} and Proposition \ref{coronaval}, $B=B_G \cup (\bigcup_{i=1}^{q}B_H^i)$ is a minimum zero forcing set of $G \circ H$ containing $v$.

Next suppose that $H$ is TF-dense and $Z_t(H)=Z(H)$.  If $v \in V(G)$, then since $G$ is ZF-dense, there exists a minimum zero forcing set $B_G$ of $G$ with $v \in B_G$.  Likewise, if $v \in V(H_i)$ for some $i$, then since $H$ is TF-dense there exists $B_H$ a minimum total forcing set of $H$ with $v \in B_H^i$.  By Lemmas \ref{lem:coronasets} and \ref{totalcoronaval}, $B=B_G \cup (\bigcup_{i=1}^{q}B_H^i)$ is a minimum total forcing set of $G \circ H$ containing $v$.

Finally suppose that $H$ is ZF-dense and $Z_t(H)>Z(H)$.  If $v \in V(H_i)$ for some $i$, then since $H$ is ZF-dense there exists $B_H$ a minimum zero forcing set of $H$ with $v \in B_H^i$.  By Lemmas \ref{lem:coronasets} and \ref{totalcoronaval}, $B=V(G) \cup (\bigcup_{i=1}^{q}B_H^i)$ is a minimum total forcing set of $G \circ H$ containing $v$.
\end{proof}

\begin{thm}
\label{thmcorcon}
Let $G$ and $H$ be connected graphs, with $H$ nontrivial.  If $H$ is CF-dense, then $G \circ H$ is CF-dense.
\end{thm}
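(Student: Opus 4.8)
The strategy is the standard one for corona constructions: fix an arbitrary vertex $w$ of $G\circ H$ and build a minimum connected forcing set of $G\circ H$ that contains $w$, distinguishing whether $w$ lies in the ``center'' copy of $G$ or in one of the $q=|V(G)|$ attached copies $H_1,\dots,H_q$ of $H$. All building blocks will have the shape $B=V(G)\cup\bigcup_{i=1}^q B^i$ with each $B^i$ a subset of the $i$th copy of $H$: by Lemma~\ref{lem:coronasets} such a $B$ is a zero forcing set of $G\circ H$ as soon as every $B^i$ (read inside $H$) is a zero forcing set of $H$, and $(G\circ H)[B]$ is connected since $G$ is connected and every vertex of each nonempty $B^i$ is adjacent to the corresponding $u_i$; hence $B$ is then a connected forcing set. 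By Lemma~\ref{connectcoronaval}, $Z_c(G\circ H)=q\,Z(H)+q$, so $B$ is minimum exactly when $\sum_i|B^i|=q\,Z(H)$, i.e.\ when each $B^i$ is a minimum zero forcing set of $H$.

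If $w\in V(G)$, pick any minimum zero forcing set $B_H$ of $H$ and take $B^i$ to be the copy of $B_H$ in $H_i$ for every $i$; then $B=V(G)\cup\bigcup_iB^i$ is a minimum connected forcing set by Lemma~\ref{connectcoronaval} and contains $w$. (Only isolate-freeness of $H$, which follows from $H$ being connected and nontrivial, is used here, which also explains why no density hypothesis on $G$ is imposed.) If instead $w$ lies in a copy $H_j$, then since $H$ is CF-dense I would select a minimum connected forcing set $C$ of $H$ containing the vertex of $H$ corresponding to $w$, put $B^j$ equal to the copy of $C$ inside $H_j$, and for $i\neq j$ let $B^i$ be a copy of an arbitrary minimum zero forcing set of $H$. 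Since $C$ is connected, contains $w$, and (being a connected forcing set) is in particular a zero forcing set of $H$, the resulting $B=V(G)\cup\bigcup_iB^i$ is again a connected forcing set of $G\circ H$ containing $w$.

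The main obstacle is confirming that this last $B$ is \emph{minimum}. One has $|B|=q+|C|+(q-1)Z(H)=q\,Z(H)+q+\bigl(Z_c(H)-Z(H)\bigr)$, so matching the value $Z_c(G\circ H)=q\,Z(H)+q$ from Lemma~\ref{connectcoronaval} hinges on controlling the gap between $Z_c(H)$ and $Z(H)$: the construction is immediate when $Z_c(H)=Z(H)$, and in general one must instead extract from CF-density of $H$ a \emph{minimum zero forcing} set of $H$ through $w$ rather than merely a minimum connected forcing set. I would attack this by analyzing how a minimum connected forcing set of $G\circ H$ restricts to the subgraph $(G\circ H)[V(H_j)\cup\{u_j\}]\cong H\vee K_1$ — using Lemma~\ref{lem:coronaforce} to neutralize the apex $u_j$ and Lemma~\ref{l:join:c} to pin down $Z_c(H\vee K_1)$ — and thereby identify exactly which vertices of a copy of $H$ can appear in a minimum connected forcing set of $G\circ H$. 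Reconciling this description with the vertex $w$ prescribed by CF-density of $H$ is the crux of the proof.
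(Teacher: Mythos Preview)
Your strategy mirrors the paper's: both build $B=V(G)\cup\bigcup_{i}B^i$ where each $B^i\subseteq V(H_i)$ is (a copy of) a forcing set of $H$, invoke Lemma~\ref{lem:coronasets} for the forcing property, and appeal to Lemma~\ref{connectcoronaval} for minimality. The paper simply takes every $B^i$ to be a copy of a minimum \emph{connected} forcing set $B_H$ of $H$ containing the prescribed vertex and asserts without further comment that the resulting $B$ is a minimum connected forcing set of $G\circ H$.

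You are right to flag minimality as the crux, and in fact the gap you isolate is fatal rather than a technical wrinkle. The paper's $B$ has cardinality $q+q\,Z_c(H)$, whereas Lemma~\ref{connectcoronaval} gives $Z_c(G\circ H)=q+q\,Z(H)$; these agree only when $Z_c(H)=Z(H)$. Your variant (a minimum connected forcing set in one copy, minimum zero forcing sets elsewhere) still overshoots by $Z_c(H)-Z(H)$. Moreover, the lower-bound argument inside the proof of Lemma~\ref{connectcoronaval} shows that for $q\ge 2$ every minimum connected forcing set $B$ of $G\circ H$ has $V(G)\subseteq B$ and forces each $B\cap V(H_i)$ to be a \emph{minimum zero forcing} set of $H_i$. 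Hence a vertex of some $H_j$ lies in a minimum connected forcing set of $G\circ H$ if and only if it lies in a minimum zero forcing set of $H$; the operative hypothesis is ZF-density of $H$, not CF-density. Concretely, $H=K_{1,3}$ is CF-dense (Proposition~\ref{prop:star}) but not ZF-dense, and with $G=K_2$ the center of either copy of $H$ lies in no minimum connected forcing set of $G\circ H$, so $G\circ H$ is not CF-dense. Your proposed detour through $H\vee K_1$ and Lemma~\ref{l:join:c} therefore cannot close the gap; what it would uncover is exactly this ZF-density requirement on $H$.
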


\begin{proof}
Begin by enumerating the vertices of $G$, $\{u_i\}_{i=1}^q$.  Next, for each $i \in [q]$, let $H_i'=(G \circ H)[V(H_i) \cup \{u_i\}]$.  Now let $v \in V(G \circ H)$ be arbitrary.  If $v \in V(H_i)$ for some $i$, then since $H$ is CF-dense there exists $B_H$ a minimum connected forcing set of $H$ with $v \in B_H^i$, where for each $i \in [q]$, $B_H^i$ is the copy of $B_H$ in $H_i$.  By Lemmas \ref{lem:coronasets} and \ref{connectcoronaval}, $B=V(G) \cup (\bigcup_{i=1}^{q}B_H^i)$ is a minimum connected forcing set of $G \circ H$ containing $v$.
\end{proof}

Theorems \ref{thmcortfzf} and \ref{thmcorcon} imply the following corollary.

\begin{cor}
Let $G$ and $H$ be connected graphs, with $H$ nontrivial.  If $G$ is ZF-dense and $H$ is ZTCF-dense, then $G \circ H$ is ZTCF-dense. 
\end{cor}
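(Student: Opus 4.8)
The plan is to unpack the hypothesis that $H$ is ZTCF-dense into its three constituent properties --- $H$ is ZF-dense, TF-dense, and CF-dense --- and then invoke Theorems \ref{thmcortfzf} and \ref{thmcorcon} to establish each of the three density properties for $G \circ H$ separately. As a preliminary observation, since $H$ is connected and nontrivial we have $n(H) \ge 2$, so $H$ is isolate-free; this guarantees that the standing hypotheses of Theorem \ref{thmcortfzf} are satisfied.

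First I would handle ZF-density: since $G$ is ZF-dense and $H$ is ZF-dense, the first bullet of Theorem \ref{thmcortfzf} gives directly that $G \circ H$ is ZF-dense. Next, for TF-density I would split into two exhaustive cases according to whether $Z_t(H) = Z(H)$ or $Z_t(H) > Z(H)$ (these cover all possibilities because $Z(H) \le Z_t(H)$ always holds). In the first case $H$ is TF-dense with $Z_t(H) = Z(H)$, so the second bullet of Theorem \ref{thmcortfzf} applies; in the second case $H$ is ZF-dense with $Z_t(H) > Z(H)$, so the third bullet applies. Either way, $G \circ H$ is TF-dense. Finally, for CF-density: $G$ and $H$ are connected, $H$ is nontrivial, and $H$ is CF-dense, so Theorem \ref{thmcorcon} immediately yields that $G \circ H$ is CF-dense.

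Combining the three conclusions, $G \circ H$ is simultaneously ZF-dense, TF-dense, and CF-dense, hence ZTCF-dense, as desired. There is essentially no genuine obstacle here: the corollary is a bookkeeping assembly of the two preceding theorems, and the only point requiring a moment of care is the case distinction on $Z_t(H)$ versus $Z(H)$ in the TF-density step, which is forced by the fact that Theorem \ref{thmcortfzf} treats those two regimes under different hypotheses on $H$.
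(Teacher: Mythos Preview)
Your proposal is correct and follows exactly the route the paper takes: the paper simply states that the corollary follows from Theorems~\ref{thmcortfzf} and~\ref{thmcorcon}, and you have spelled out precisely how those two theorems combine, including the case split on $Z_t(H)$ versus $Z(H)$ needed to invoke the appropriate bullet of Theorem~\ref{thmcortfzf}. Your added observation that $H$ nontrivial and connected implies $H$ is isolate-free (so that the hypotheses of Theorem~\ref{thmcortfzf} are met) is a useful detail the paper leaves implicit.
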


Therefore, the corona of any pair of ZTCF-dense graphs identified in Section \ref{sec:basics} is also ZTCF-dense.

\section{Conclusion}
\label{sec:conclusion}
In this paper, we introduced and studied the concept of connected forcing density, extending prior work on zero forcing and total forcing to incorporate connectivity constraints. In Section \ref{sec:uniqueness}, we gave necessary conditions for the uniqueness of connected forcing sets, and characterized the number of distinct connected forcing sets of extremal sizes. In Section \ref{sec:basics}, we identified a rich collection of ZTCF-dense graphs, including a family in which $Z(G) < Z_t(G) < Z_c(G)$. In Section \ref{sec:trees-and-unicyclic}, we characterized CF-dense trees. We also derived a closed form expression for counting the number of connected forcing sets of any size in trees. In Section \ref{sec:cartesian-alt}, we established structural results for constructing new ZTCF-dense graphs via Cartesian products, joins, and coronas.  We also introduced results concerning the total and connected forcing numbers of Cartesian products, joins, and coronas of graphs.

Our results underscore the combinatorial richness of connected forcing and its interplay with structural graph properties. Future directions include classifying CF-dense graphs within other graph families (such as chordal graphs, unicyclic graphs, or planar graphs), exploring probabilistic models of CF-density in random graphs, and further investigating algorithmic and complexity aspects of identifying minimum connected forcing sets and CF-dense graphs.

\medskip

\end{document}